\theoremstyle{plain}
\newtheorem{theorem}{Theorem}[section] 
\newtheorem{lemma}[theorem]{Lemma} 
\newtheorem{corollary}[theorem]{Corollary}
\newtheorem{proposition}[theorem]{Proposition}
\theoremstyle{definition} 
\numberwithin{equation}{section} 
\def\C{\mathbb C}
\def\H{\mathcal H}
\def\M{\mathcal M}
\def\S{\mathcal S}
\def\F{\mathcal F}
\def\Cont{\mathcal C}
\def\d{\,\mathrm{d}}
\def\dist{\mathrm{d}}
\def\fdist{\mathrm{d}_\phi}
\def\a{\alpha}
\def\e{\varepsilon}
\def\ee{\mathrm{e}}
\def\gFockh{F^2_\phi}
\def\gFock{F^p_\phi}
\def\Lpf{L^p_\phi}
\def\Pf{P_\phi}
\def\TT{{\mathcal T}_{\mu}}
\def\Tz{T_\mu}
\newcommand{\ef}[1]{\ee^{-\phi(#1)}}
\newcommand{\efh}[1]{\ee^{-2\phi(#1)}}
\newcommand{\tld}[1]{\widetilde{#1}}
\newcommand{\tr}[1]{\mathrm{tr}\left(#1\right)}
\newcommand{\cl}[1]{\overline{#1}}
\newcommand{\imply}[2]{$ $\vspace*{4pt} \newline \noindent #1 $\Rightarrow$ #2:}
\newcommand{\set}[2]{\left\{#1 \colon #2\right\}}
\newcommand{\psc}[1]{\langle #1 \rangle}
\newcommand{\fpsc}[1]{\langle #1 \rangle_\phi}
\newcommand{\pth}[1]{\left( #1 \right)}
\newcommand{\abs}[1]{\left| #1 \right|}
\newcommand{\norm}[1]{\left\| #1 \right\|}
\newcommand{\gfnorm}[1]{\norm{#1}_{p,\phi}}
\newcommand{\gfhnorm}[1]{\norm{#1}_{2,\phi}}
\newcommand{\gfinfnorm}[1]{\norm{#1}_{\infty,\phi}}
\newcommand{\infnorm}[1]{\norm{#1}_{\infty}}
\newcommand{\propref}[1]{Proposition~\ref{#1}}
\newcommand{\thmref}[1]{Theorem~\ref{#1}}
\newcommand{\corolref}[1]{Corollary~\ref{#1}}
\newcommand{\lemmaref}[1]{Lemma~\ref{#1}}
\newcommand{\secref}[1]{Section~\ref{#1}}
\DeclareMathOperator*{\esssup}{ess\,sup}
\title{Toeplitz Operators on Doubling Fock Spaces}
\author{Roc Oliver \and Daniel Pascuas}
\address{Departament de Matem\`atica Aplicada i An\`alisi, Universitat de Barcelona, Gran Via 585, 08007 Barcelona, Spain}
\email{roc.oliver@ub.edu}
\email[Corresponding author]{daniel\_pascuas@ub.edu}
\begin{document}

\bibstyle{amsplain}

\begin{abstract}
We study Toeplitz operator theory on the doubling Fock spaces, which are Fock spaces whose exponential weight is associated to a subharmonic function with doubling Riesz measure.	Namely,  we characterize the boundedness, compactness and membership in the Schatten class of Toeplitz operators on doubling Fock spaces whose symbol is a positive Radon measure.

\end{abstract}
\date{\today}
\thanks{Partially supported by  DGICYT Grants MTM2011-27932-C02-01 and MTM2014-51834-P, and DURSI Grant 2014SGR 289.}
\keywords{Fock spaces, doubling measures, Toeplitz operators, Schatten classes}
\subjclass[2010]{30H20, 46E15, 47B10, 47B35}

\maketitle
\thispagestyle{empty}

\section{Introduction}

During the last decades many authors have contributed to develop an operator theory on the classical Fock spaces (see the recent book \cite{zhu2012fock} for an account of that theory). We are interested in a more general setting of Fock spaces, the doubling Fock spaces, which we are going to introduce. Denote by $\dist A$ the Lebesgue area measure on the complex plane $\C$, and let $\H(\C)$ be the space of entire functions.  Let $\phi$ be a subharmonic function on $\C$. Then $\Delta \phi$ is a locally finite positive Borel measure. From now on we suppose that $\Delta \phi$ is a doubling measure. For $0<p <\infty$, let $L^p_{\phi} := L^p_{\phi}(\C)$ be the space of all measurable functions $f$ on $\C$ such that
$$\norm{f}_{p,\phi}^p := \int_\C \abs{f(z) \ee^{-\phi(z)}}^p \d A(z) < \infty,$$
and 
$$L^\infty_{\phi} = L^\infty_\phi(\C) := \set{f \ \mathrm{meas.}}{\gfinfnorm{f} := \esssup_{z\in\C} \abs{f(z) \ee^{-\phi(z)}} < \infty}.$$
It is clear that $L^p_\phi = L^{p}_{}(\C, \ee^{-p \phi}\! \d A)$, for $0 < p < \infty$. Moreover, $(L^p_\phi, \norm{\,\cdot\,}_{p,\phi})$ is a Banach space for $1 \leq p \leq \infty$, and a quasi-Banach space for $0<p<1$. The doubling Fock spaces $\gFock$ are defined to be
$$
\gFock := \H(\C) \cap \Lpf \qquad (0 < p \leq \infty).
$$
Recall that for $\phi(z) = \frac{\a}{2} \abs{z}^2$, where $\alpha>0$,  one obtains the classical Fock spaces. Moreover, if $\phi$ is a subharmonic function on $\C$ such that $\Delta\phi$ is comparable to the Lebesgue measure $\dist A$,
 then $\Delta\phi$ is a doubling measure. But there are subharmonic functions $\phi$ on $\C$ such that $\Delta\phi$ is a doubling measure which is not comparable to $\dist A$, for example, any subharmonic non-harmonic polynomial on $\C$ with degree greater than~{$2$.}
 
 As far as we know, the first work dealing with such spaces is the 
 seminal paper~{\cite{marco2003interpolating}} by N. Marco, X. Massaneda and J. Ortega-Cerd\`{a}, where the authors characterized the interpolating and sampling sequences for the doubling Fock spaces by extending the corresponding characterizations due to J. Ortega-Cerd\`{a} and K. Seip for the case that $\Delta\phi$ is comparable to the Lebesgue measure (see~{\cite{ortegaseip}}).
 Moreover, in doing so they proved a large amount of technical properties of the doubling Fock spaces. 
 Furthermore, J. Marzo and J. Ortega-Cerd\`{a} completed the previous work in their interesting paper~{\cite{marzo2009pointwise}} by showing quite sharp pointwise estimates of the
 Bergman kernel associated to these spaces.
 
The goal of this article is to apply all this technical machinery to study the theory of Toeplitz operators acting on the doubling Fock spaces.
But previously we obtain some complementary results such as the complex interpolation and the duality of these spaces. Namely, we completely characterize the boundedness, compactness and membership in the Schatten class of the Toeplitz operators on the doubling Fock spaces whose symbol is a positive locally finite Borel measure.  In doing so we prove characterizations of the so-called Carleson measures and vanishing Carleson measures associated to these spaces. 
All the above characterizations are in terms of the Berezin and average transforms of the symbol measure.
Our results extend to the setting of the doubling Fock spaces previous known results for the classical Fock spaces (see~{\cite{zhu2012fock}}) and the case that $\Delta\phi$ is comparable to the Lebesgue measure
(see~{\cite{isra2010schatten, xiao-wang-xia, schuster}}).
 For similar results in the setting of the Bergman spaces on the unit disk 
 see~{\cite{hicham1,hicham2}}.
 
 The paper is organized as follows. In the next section we will fix the notation and introduce some technical but useful properties of our Fock spaces. \secref{sec:proj} deals with the Bergman projection on the doubling Fock  spaces and its interesting consequences, which are the duality and the complex interpolation of these spaces. 
 In \secref{sec:fock-carleson:measures} we characterize the so-called Fock-Carleson measures and  vanishing Fock-Carleson measures.
 In \secref{sec:toeplitz} we introduce the Toeplitz operators on the doubling Fock spaces whose symbols are locally finite positive Borel measures, and 
 we characterize their boundedness and compactness in terms of their symbols.
 Finally, a complete description of the membership in the Schatten class of those Toeplitz operators is given in \secref{sec:membership:shatten:class}, which is the last section of the paper and the one that contains the more elaborate proofs of the paper.
 
Finally a word about notation. We write either $f \lesssim g$ or $g\gtrsim f$ whenever there is a positive constant $C$, independent of the variables involved, such that $f \leq C g$, and $f\simeq g$ if both $f\lesssim g$ and $g \lesssim f$ hold.

%
%
%
%

\section{Basic Properties}

As usual, we denote by $D(z,r)$ the open disk in $\C$ of center $z\in \C$ and radius $r>0$. A positive Borel measure $\mu$ on $\C$ is called \emph{doubling} if there exists a constant $C>1$ such that 
\begin{equation}\label{doubling:measure}
0<\mu(D(z,2r)) \leq C \mu(D(z,r))<\infty,\quad\mbox{for every $z\in \C$ and $r>0$.}
\end{equation}
 The smallest constant $C>1$ satisfying \eqref{doubling:measure} is called the \emph{doubling constant}~{for $\mu$} and  is denoted by $C_{\mu}$. Note that then
 $\mu(D(z,2r))\ge c \mu(D(z,r))$, for every $z\in\C$ and $r>0$,
 where $c=1+C^{-3}_{\mu}>1$, and therefore 
 \begin{equation}\label{infinite:mass}
 \lim_{r\to\infty}\mu(D(z,r))=\infty,\quad\mbox{for every $z\in \C$.}
 \end{equation}
Moreover, it is well known that $\mu$ has no mass on any circle (see \cite[p.\ 40]{stein}), and,
in particular, $\mu$ has no atoms, that is,
\begin{equation}\label{zero:mass}
\mu(\partial D(z,r))=\mu(\{z\})=0,\quad\mbox{for every $z\in \C$ and $r>0$.}
\end{equation}

An important and useful estimate for doubling measures is the following result due to M. Christ:

\begin{lemma}[{\cite[Lemma 2.1]{christ1991}}] \label{estimate:doubling}
	Let $\mu$ be a doubling measure on $\C$. Then there are constants $C>1$ and $0<\delta<1$, which only depend on $C_{\mu}$, such that
	 if $D$ and $D'$ are open disks of radii $r$ and $r'$, respectively, such that
	 $D\cap D'\ne\emptyset$ and $r'<r$, then 
	 $$
	 C^{-1}(r'/r)^{1/\delta}\mu(D)\le\mu(D')\le C(r'/r)^{\delta}\mu(D).               
	 $$
\end{lemma}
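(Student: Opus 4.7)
The plan is to iterate the doubling inequality together with its reverse counterpart and then transport the resulting concentric estimates to the non-concentric pair $D,D'$ via the triangle inequality. Forward iteration of \eqref{doubling:measure} gives $\mu(D(z,2^k r))\leq C_\mu^k \mu(D(z,r))$, while iterating the refinement $\mu(D(z,2r))\geq c\,\mu(D(z,r))$ with $c = 1+C_\mu^{-3} > 1$ (already noted in the text) yields $\mu(D(z,r))\leq c^{-k}\mu(D(z,2^k r))$. This second, ``reverse doubling'' estimate is the crucial ingredient, since it is what supplies genuine polynomial decay rather than a merely uniform bound.

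For the upper bound I would let $z,z'$ denote the centers of $D,D'$. Since $D\cap D'\neq\emptyset$ and $r'<r$, the triangle inequality forces $|z-z'|<r+r'<2r$. Choosing $k=\lfloor\log_2(r/r')\rfloor$ makes $2^k r'\leq r$, and hence $D(z',2^k r')\subset D(z,3r)$. Two applications of the doubling condition then give $\mu(D(z',2^k r'))\leq C_\mu^2\mu(D)$, while $k$ applications of reverse doubling on the concentric family around $z'$ give $\mu(D')\leq c^{-k}\mu(D(z',2^k r'))$. Combining and using $c^{-k}\lesssim (r'/r)^{\log_2 c}$ produces the upper bound with exponent $\delta_1:=\log_2 c\in(0,1)$ (positivity uses $c>1$, and $\delta_1<1$ is automatic from $c<2$, which holds since $C_\mu>1$).

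For the lower bound I would mirror this: take $k'=\lceil\log_2(3r/r')\rceil$, so $2^{k'}r'\geq 3r$ and consequently $D\subset D(z',2^{k'}r')$. Iterating doubling $k'$ times on concentric disks centered at $z'$ yields $\mu(D)\leq C_\mu^{k'}\mu(D')$, and since $k'\leq 1+\log_2(6r/r')$ this rearranges to $\mu(D')\gtrsim (r'/r)^{\log_2 C_\mu}\mu(D)$, i.e.\ the lower bound with exponent $1/\delta_2:=\log_2 C_\mu$. Setting $\delta:=\min(\delta_1,1/\log_2 C_\mu)\in(0,1)$ then gives a single $\delta$ for which both inequalities hold simultaneously, because $r'/r\leq 1$ lets us freely shrink $\delta$ in the upper bound or enlarge $1/\delta$ in the lower bound at the price of a constant.

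The argument is essentially mechanical once the reverse doubling with constant $c=1+C_\mu^{-3}$ is in hand, which the paper provides. The only real point of care is the triangle-inequality bookkeeping that upgrades the concentric iteration statements to the non-concentric pair $(D,D')$ through the inclusions $D(z',2^k r')\subset D(z,3r)$ and $D\subset D(z',2^{k'}r')$, together with reconciling the two naturally different exponents $\log_2 c$ and $\log_2 C_\mu$ into a single $\delta$; all the constants that appear depend only on $C_\mu$, as required.
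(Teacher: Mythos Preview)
The paper does not prove this lemma; it simply cites it as \cite[Lemma 2.1]{christ1991}, so there is no argument in the paper to compare against. Your proof is correct and is in fact the standard one: iterate doubling and reverse doubling on concentric disks, then use elementary inclusions coming from $D\cap D'\ne\emptyset$ to pass to the non-concentric pair. One small remark on your final sentence: $1/\log_2 C_\mu$ need not lie in $(0,1)$ when $1<C_\mu\le 2$, but this is harmless since the minimum with $\delta_1=\log_2 c<1$ is still in $(0,1)$; you might want to make that explicit.
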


Let $\phi$ be a subharmonic function on $\C$ such that $\mu = \Delta \phi$ is a doubling measure.   Then $\mu$ satisfies \eqref{infinite:mass} and \eqref{zero:mass}
 so the map $r\in(0,\infty)\mapsto\mu(D(z,r))\in(0,\infty)$ is a strictly increasing homeomorphism. In particular,  
 for every $z\in \C$ there is a unique radius $\rho(z)=\rho_{\phi}(z)>0$ such that $\mu(D(z,\rho(z)))=1$.
 Note that, for $z,w\in\C$, we have that 
 $D(z,\rho(z))\subset D(w,\rho(z)+|z-w|)$, so  
 $1\le \mu(D(w,\rho(z)+|z-w|))$, and therefore $\rho(w)\le\rho(z)+|z-w|$. By symmetry it follows that
 \begin{equation}\label{rho:Lipschitz}
 |\rho(w)-\rho(z)|\le|z-w|,\quad\mbox{ for every $z,w\in\C$.} 
 \end{equation} 
 
The function $\rho^{-2}$ can be considered as a regularization of the measure $\Delta \phi$. Indeed, there exist $\psi \in \Cont^\infty(\C)$ and a constant $C>0$ such that $|\phi-\psi|\leq C$, 
 $(\Delta \psi)\d A$ is a doubling measure and
	$$\Delta \psi \simeq \frac{1}{\rho^2_\psi} \simeq \frac{1}{\rho^2_\phi}.$$
Due to that fact, the space $\gFock$ does not change if $\phi$ is replaced by $\psi$, so from now on we will assume that $\phi \in \Cont^\infty(\C)$ and $\Delta \phi \simeq 1/\rho^2$. For this reason, we will use sometimes the notation $\!\d \sigma := \!\d A/\rho^2$. See~\cite{marco2003interpolating} for all that.

Let $D^r(z) := D(z, r\rho(z))$ and $D(z): = D^1(z)=D(z,\rho(z))$, for $z\in\C$ and $r>0$.  We also use the following notations: $D^r(z)^c := \C \setminus D^r(z)$ and $D(z,r)^c := \C\setminus D(z,r)$. 

As a consequence of  \eqref{rho:Lipschitz} and \lemmaref{estimate:doubling} we obtain the following useful estimate:

\begin{lemma}\label{lemma:rhoinballs}
	For every $r>0$ there is a constant $c_r\ge1$, depending only on $r$ and  the doubling constant for $\Delta \phi$, such that
	\begin{equation}\label{eq:rhoinballs}
	c_r^{-1}\,\rho(z) \leq \rho(w) \leq c_r\,\rho(z), 
	\quad \mbox{ for every $z\in \C$ and  $w\in D^r(z)$.}
	\end{equation}
	Namely, $c_r=(1-r)^{-1}$, for every $0<r<1$.
\end{lemma}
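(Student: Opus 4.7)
The plan is to split by size of $r$. For $0 < r < 1$ the Lipschitz property \eqref{rho:Lipschitz} will suffice and yield the explicit constant $c_r = (1-r)^{-1}$ stated. For general $r \geq 1$ one must invoke \lemmaref{estimate:doubling}, which is the only place where the doubling property of $\mu := \Delta\phi$ enters.

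For $0 < r < 1$ and $w \in D^r(z)$, the inequality $|z-w| < r\rho(z)$ combined with \eqref{rho:Lipschitz} gives $|\rho(w) - \rho(z)| < r\rho(z)$, which rearranges to $(1-r)\rho(z) < \rho(w) < (1+r)\rho(z)$. Since $(1+r)(1-r) = 1-r^2 \leq 1$ yields $1+r \leq (1-r)^{-1}$, both bounds are subsumed by the symmetric estimate with $c_r := (1-r)^{-1}$, proving the ``namely'' clause.

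For general $r > 0$ I would exploit the symmetry of the conclusion: it suffices to show that $\rho(z) \leq c_r \rho(w)$ whenever $\rho(w) \leq \rho(z)$ and $w \in D^r(z)$, because in the complementary regime $\rho(w) > \rho(z)$ one has $|z-w| < r\rho(z) \leq r\rho(w)$, so $z \in D^r(w)$ and the same estimate applied with $z$ and $w$ exchanged gives the reverse inequality. Assuming then $\rho(w) \leq \rho(z)$, the triangle inequality $|z-w| + \rho(w) < r\rho(z) + \rho(z)$ shows that $D := D(z, (1+r)\rho(z))$ contains $D(w, \rho(w))$. Iterating the doubling inequality $\lceil \log_2(1+r)\rceil$ times starting from $\mu(D(z,\rho(z))) = 1$ produces a constant $K_r := C_\mu^{\lceil \log_2(1+r)\rceil}$, depending only on $r$ and $C_\mu$, such that $\mu(D) \leq K_r$. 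Applying \lemmaref{estimate:doubling} to $D$ and $D(w,\rho(w))$ then yields
$$
1 = \mu(D(w,\rho(w))) \leq C\left(\frac{\rho(w)}{(1+r)\rho(z)}\right)^{\delta} \mu(D) \leq C K_r \left(\frac{\rho(w)}{(1+r)\rho(z)}\right)^{\delta},
$$
which rearranges to $\rho(z) \leq (1+r)^{-1}(CK_r)^{1/\delta}\rho(w)$, as required.

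The main obstacle is the disjoint-disk configuration that underlies the general case (it only arises when $r > 1$, for otherwise $D(z,\rho(z))$ and $D(w,\rho(w))$ necessarily overlap): the Lipschitz estimate \eqref{rho:Lipschitz} alone is too weak to yield a lower bound on $\rho(w)/\rho(z)$, and one genuinely needs the iterated doubling control on $\mu(D(z,(1+r)\rho(z)))$ in order for Christ's lemma to be effective.
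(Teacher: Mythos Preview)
Your proof is correct and follows essentially the same route as the paper: the Lipschitz estimate \eqref{rho:Lipschitz} handles $0<r<1$ with the explicit constant $c_r=(1-r)^{-1}$, and for larger $r$ one reduces to the case $\rho(w)\le\rho(z)$ and then applies Christ's \lemmaref{estimate:doubling} with the small disk $D(w,\rho(w))$ inside a disk of radius $\simeq r\rho(z)$ whose $\mu$-measure is controlled by the doubling constant. The only cosmetic differences are that the paper gets the upper bound $\rho(w)\le(1+r)\rho(z)$ directly from \eqref{rho:Lipschitz} for all $r$ (rather than via your symmetry swap), and it bounds the measure of the large disk by a second application of \lemmaref{estimate:doubling} to the pair $D^r(z)$, $D(z)$, whereas you iterate the raw doubling inequality; these are equivalent.
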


\begin{proof}
Observe that~{\eqref{rho:Lipschitz}} shows that
$$
(1-r)\rho(z)\le\rho(w)\le(1+r)\rho(z),
\quad \mbox{ for every $z\in\C$ and $w\in D^r(z)$.}
$$
Therefore $c_r=(1-r)^{-1}$ satisfies \eqref{eq:rhoinballs}, for every $0<r<1$, and we only have to prove the  first estimate of \eqref{eq:rhoinballs} for $r>1$ and $w\in D^r(z)$ such that $\rho(w)<\rho(z)$. In this case, we may apply \lemmaref{estimate:doubling}  to the doubling measure $\mu=\Delta\phi$ and the disks $D=D^r(z)$ and $D'=D(z)$, so we get 
$$
\mu(D^r(z))\le Cr^{\frac1{\delta}},
$$
where $C>1$ and $\delta\in(0,1)$ are constants depending only on $C_{\mu}$.   
Then another application of  \lemmaref{estimate:doubling} to the disks $D=D^r(z)$ and $D'=D(w)$ shows that   
$$
1\le C\left(\frac{\rho(w)}{r\rho(z)}\right)^{\delta}\mu(D^r(z))
  \le C^2r^{\frac1{\delta}-\delta}\left(\frac{\rho(w)}{\rho(z)}\right)^{\delta},
$$
and the proof is complete.
\end{proof}
	
The behavior of $\rho$ outside the disks $D(z)$ also follows from \lemmaref{estimate:doubling} as M. Christ proved:

\begin{lemma}[{\cite[Lemma 3.3]{christ1991}}]  \label{lemma:rhooutsideballs}
There is a constant $\delta \in (0,1)$, depending only on the doubling constant for $\Delta \phi$, such that
$$
\rho(z) \lesssim \abs{z-w}^{1-\delta} \rho(w)^{\delta} 
\qquad (z\in\C, w\in D(z)^c)
$$
and
$$
\rho(w) \lesssim \abs{z-w}^{1-\delta}\rho(z)^\delta
 \qquad (z\in\C, w\in D(z)^c).
$$
\end{lemma}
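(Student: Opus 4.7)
Set $R:=|z-w|$ and let $C>1$, $\delta\in(0,1)$ be the constants from \lemmaref{estimate:doubling} applied to $\mu=\Delta\phi$; the hypothesis $w\in D(z)^c$ reads $R\ge\rho(z)$. The plan is to pin down the $\mu$-mass of a well-chosen auxiliary disk from above and below via \lemmaref{estimate:doubling}, using the two unit-mass identities $\mu(D(z,\rho(z)))=\mu(D(w,\rho(w)))=1$ together with the Lipschitz bound~\eqref{rho:Lipschitz} to ensure the required inclusions and radius comparisons.

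For the first inequality, take the auxiliary disk $\widetilde D:=D(w,R+\rho(z))$. By the triangle inequality $D(z,\rho(z))\subset\widetilde D$, and by~\eqref{rho:Lipschitz} also $D(w,\rho(w))\subset\widetilde D$, since $\rho(w)\le\rho(z)+R$. Applying \lemmaref{estimate:doubling} to the (intersecting) pair $D(z,\rho(z))\subset\widetilde D$ yields the upper bound $\mu(\widetilde D)\le C\,((R+\rho(z))/\rho(z))^{1/\delta}$, while applying it to the concentric pair $D(w,\rho(w))\subset\widetilde D$ gives the lower bound $\mu(\widetilde D)\ge C^{-1}\,((R+\rho(z))/\rho(w))^{\delta}$. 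Since $R\le R+\rho(z)\le 2R$ --- the only place the hypothesis $R\ge\rho(z)$ enters --- combining the two bounds yields
$$
(R/\rho(w))^{\delta}\le C'\,(R/\rho(z))^{1/\delta},
$$
and solving for $\rho(z)$ produces $\rho(z)\lesssim R^{1-\delta^2}\rho(w)^{\delta^2}$, the first estimate with exponent $\delta':=\delta^2\in(0,1)$.

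For the second inequality, run the symmetric argument with auxiliary disk $\widetilde D':=D(z,R+\rho(w))$, which leads to
$$
((R+\rho(w))/\rho(z))^{\delta}\le C''\,((R+\rho(w))/\rho(w))^{1/\delta}.
$$
If $R\ge\rho(w)$, then $R+\rho(w)\le 2R$ and the same manipulation as before gives $\rho(w)\lesssim R^{1-\delta^2}\rho(z)^{\delta^2}$. If instead $R<\rho(w)$, i.e., $z\in D(w)$, invoke \lemmaref{lemma:rhoinballs} (with $r=1$ and $z,w$ swapped) to get $\rho(w)\simeq\rho(z)$; combined with $R\ge\rho(z)$ this gives $\rho(w)\lesssim\rho(z)\le R^{1-\delta^2}\rho(z)^{\delta^2}$, completing the second inequality.

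The main obstacle is the asymmetry of the hypothesis: $w\in D(z)^c$ does not force $z\in D(w)^c$, so the clean simplification $R+\rho(z)\le 2R$ that closes the first inequality has no direct counterpart in the second. This is what necessitates the case split, with the regime $R<\rho(w)$ resolved by appealing to the local equivalence of $\rho$ on $D(\cdot)$ coming from \lemmaref{lemma:rhoinballs} rather than to Christ's estimate directly.
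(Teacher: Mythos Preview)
The paper does not supply its own proof of this lemma; it is stated with a citation to Christ~\cite[Lemma 3.3]{christ1991} and used as a black box. Your argument is correct and self-contained: you derive both estimates directly from \lemmaref{estimate:doubling} and the Lipschitz bound~\eqref{rho:Lipschitz}, landing on the exponent $\delta'=\delta^2$. The only minor wrinkles are the edge case $\rho(w)=R+\rho(z)$ (where the auxiliary disk coincides with $D(w,\rho(w))$, and the needed inequality is immediate) and the appeal to \lemmaref{lemma:rhoinballs} with $r=1$, which is legitimate since that lemma is stated for all $r>0$ even though the explicit constant is only given for $r\in(0,1)$. Your observation about the asymmetry of the hypothesis and the resulting case split in the second inequality is accurate and handled cleanly.
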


We continue with a useful and well-known result that is widely used in many situations throughout this work.

\begin{lemma}[{\cite[Lemma 19(a)]{marco2003interpolating}}] \label{lemma:estimates}
	Let $0< p < \infty$. For any $r>0$ there exists a constant $C > 0$ such that, for any $f\in \H(\C)$ and $z\in \C$,
	$$
	\abs{f(z) \ef{z}}^p \leq C \int_{D^r(z)} \abs{f(w) \ef{w}}^p \dfrac{\d A(w)}{\rho(w)^2}.
	$$
\end{lemma}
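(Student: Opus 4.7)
The plan is to reduce to the classical sub-mean value inequality for holomorphic functions, after rescaling so that $\Delta\phi$ becomes uniformly bounded on the disk of interest and then peeling off the harmonic part of $\phi$. Fix $z\in\C$ and $r>0$, and set $\tilde\phi(w):=\phi(z+\rho(z)w)$ and $\tilde f(w):=f(z+\rho(z)w)$; the latter is entire. Then $\Delta\tilde\phi(w)=\rho(z)^{2}\Delta\phi(z+\rho(z)w)$, and since $\Delta\phi\simeq 1/\rho^{2}$ and $\rho(z+\rho(z)w)\simeq\rho(z)$ on $|w|<r$ by \lemmaref{lemma:rhoinballs}, we have $\Delta\tilde\phi\simeq 1$ on $|w|<r$, with constants depending only on $r$ and the doubling constant.

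Next, I decompose $\tilde\phi=h+u$ on $|w|<r$, where
$$u(w):=\frac{1}{2\pi}\int_{|w'|<r}\log|w-w'|\,\Delta\tilde\phi(w')\,\d A(w'),$$
so $\Delta u=\Delta\tilde\phi$ on $|w|<r$ and hence $h:=\tilde\phi-u$ is harmonic there. Since $\Delta\tilde\phi$ is bounded on $|w|<r$ and $\int_{|w'|<r}|\log|w-w'||\,\d A(w')$ is uniformly bounded for $|w|<r$, we obtain $\sup_{|w|<r}|u(w)|\leq M_{r}$ for a constant $M_{r}$ depending only on $r$ and the doubling constant. Writing $h=\Re H$ with $H$ holomorphic on $|w|<r$, the function $F:=\tilde f\,\ee^{-H}$ is holomorphic on $|w|<r$.

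Since $|F|^{p}$ is subharmonic, the classical sub-mean value inequality yields $|F(0)|^{p}\leq(\pi r^{2})^{-1}\int_{|w|<r}|F(w)|^{p}\,\d A(w)$. Using $|F(w)|^{p}=|\tilde f(w)\,\ee^{-\tilde\phi(w)}|^{p}\,\ee^{pu(w)}$ together with $|u|\leq M_{r}$, this becomes
$$|f(z)\,\ef{z}|^{p}=|F(0)|^{p}\,\ee^{-pu(0)}\leq C_{r}\int_{|w|<r}|\tilde f(w)\,\ee^{-\tilde\phi(w)}|^{p}\,\d A(w).$$
Changing variables back via $w\mapsto w'=z+\rho(z)w$ gives $\d A(w)=\rho(z)^{-2}\,\d A(w')$, and a further application of \lemmaref{lemma:rhoinballs} to replace $\rho(z)^{-2}$ by $\rho(w')^{-2}$ up to a constant yields the desired estimate.

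The main obstacle is obtaining the bounded decomposition $\tilde\phi=h+u$ uniformly in $z$; this is precisely why the rescaling step is indispensable, since without it the corresponding Newtonian potential of $\Delta\phi$ on $D^{r}(z)$ would only be bounded in terms of $\log\rho(z)$ rather than by a universal constant, and the factor $\ee^{pu}$ in the estimate for $|F|^{p}$ could not be absorbed.
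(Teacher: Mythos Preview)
The paper does not give its own proof of this lemma; it simply cites \cite[Lemma~19(a)]{marco2003interpolating}. Your argument is correct and is essentially the standard one (and the one implicit in the cited reference): after rescaling to make $\Delta\tilde\phi$ uniformly bounded on the fixed disk, you subtract a bounded Newtonian potential to reduce to a harmonic weight, then conjugate by $\ee^{-H}$ with $H$ holomorphic and apply the sub-mean-value inequality to $|F|^p$. The use of \lemmaref{lemma:rhoinballs} both to control $\Delta\tilde\phi$ on $|w|<r$ and to swap $\rho(z)^{-2}$ for $\rho(w')^{-2}$ in the final change of variables is exactly right, and the paper's standing assumption $\phi\in\Cont^\infty(\C)$ with $\Delta\phi\simeq\rho^{-2}$ is what makes the potential-theoretic step clean. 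Your closing remark about why the rescaling is essential is accurate and worth keeping.
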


It is a direct consequence of \lemmaref{lemma:estimates} and \eqref{eq:rhoinballs} that, for any $0< p \leq \infty$, we have
	\begin{equation} \label{eq:est}
		\abs{f(z)} \lesssim \frac{\ee^{\phi(z)}}{\rho(z)^{2/p}} \gfnorm{f} \qquad (z\in \C, f\in \gFock).
	\end{equation}
This pointwise estimate implies that $(\gFock, \norm{\, \cdot\,}_{p,\phi})$ is a Banach space for $1\leq p \leq \infty$, and a quasi-Banach space for $0 < p <1$. Another consequence is that, for any $z \in \C$, the pointwise evaluation $f \mapsto f(z)$ is a bounded linear functional on $\gFock$. In particular, $\gFockh$ is a reproducing kernel Hilbert space: there exists a unique function $K_z$ in $\gFockh$ such that $f(z) = \psc{f, K_z}_\phi$, for every $f\in \gFockh$, where
\begin{equation}\label{eq:fpsc}
	\psc{f, g}_\phi := \int_\C f(w) \overline{g(w)} \efh{w} \d A(w)
\end{equation}
is the inner product in $L^2_\phi$. In particular, $K_z(w) = \psc{K_z, K_w}_\phi = \overline{K_w(z)}$, for every $z,w\in\C$.
The function $K_z$ is called the \emph{reproducing kernel} or \emph{Bergman kernel} for $\gFockh$ at $z\in \C$.

First, we recall  the $L^2_{\phi}$-norm estimate of the Bergman kernels (see~{\cite[{Prop.}~{2.10}]{marzo2009pointwise}}):
\begin{equation} \label{prop:kerneldiagonal}
\gfhnorm{K_z} \simeq \frac{\ee^{\phi(z)}}{\rho(z)} \qquad (z\in\C).
\end{equation}

In order to state the pointwise estimates of the Bergman kernels it is convenient to consider the distance $\dist_\phi$ induced by the metric $\rho^{-2} (z) \d z \otimes \!\d \cl{z}$. Namely, for any $z, w\in \C$,
$$\dist_\phi(z,w) := \inf_\gamma \int_0^1 \abs{\gamma' (t)} \frac{\d t}{\rho(\gamma(t))},$$
where $\gamma$ runs on the piecewise $\Cont^1$ curves $\gamma \colon [0,1] \to \C$ with $\gamma(0) = z$ and $\gamma(1) = w$. 
This distance satisfies the following estimates:

\begin{lemma}[{\cite[Lemma 4]{marco2003interpolating}}] 
	For every $r>0$ there is a constant $c_r>1$ such that
	\begin{equation}\label{dist:estimate1}
	c_r^{-1}\, \frac{|z-w|}{\rho(z)} \le \fdist(z,w)\le c_r\, \frac{|z-w|}{\rho(z)}\,\quad(z\in\C,\,w\in D^r(z))
	\end{equation}
	and
	\begin{equation}\label{dist:estimate2}
	c_r^{-1} \pth{\frac{|z-w|}{\rho(z)}}^{\delta} \leq \fdist(z,w)\le c_r \pth{\frac{|z-w|}{\rho(z)}}^{1/\delta}
	\quad(z\in\C,\,w\in D^r(z)^c),
	\end{equation}
	where $\delta\in(0,1)$ is the constant in \lemmaref{lemma:rhooutsideballs}.
\end{lemma}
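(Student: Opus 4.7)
The plan is to obtain each upper bound by inserting the straight segment from $z$ to $w$ into the definition of $\fdist$ and controlling $\rho$ along that segment via \lemmaref{lemma:rhoinballs} (near case) or \lemmaref{lemma:rhooutsideballs} (far case); the lower bounds require a more careful analysis of an arbitrary admissible curve $\gamma$.

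For the near estimate \eqref{dist:estimate1}, I would parametrize $\gamma_0(t)=z+t(w-z)$, $t\in[0,1]$; since $\gamma_0(t)\in D^r(z)$, \lemmaref{lemma:rhoinballs} gives $\rho(\gamma_0(t))\simeq\rho(z)$, which yields $\fdist(z,w)\lesssim |z-w|/\rho(z)$. For the reverse inequality, fix $R=2r$ and take an arbitrary $\gamma$: if $\gamma([0,1])\subset D^R(z)$, then again $\rho(\gamma)\simeq\rho(z)$ and $\int_0^1|\gamma'|\,\d t\ge|z-w|$ closes the estimate; otherwise, replace $\gamma$ by its restriction to $[0,\tau]$, where $\tau$ is its first exit time from $D^R(z)$, so that the restriction has Euclidean length $\ge R\rho(z)$ and hence $\phi$-length $\gtrsim R\gtrsim|z-w|/\rho(z)$.

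For the far estimate \eqref{dist:estimate2}, I may assume $|z-w|\ge\rho(z)$ (the remaining range, possible only when $r<1$, reduces to \eqref{dist:estimate1} applied with a larger radius). The upper bound again comes from $\gamma_0$: split at $t_0=\rho(z)/|z-w|$, so the inner portion $[0,t_0]$ lies in $D(z)$ and contributes $\lesssim 1\lesssim(|z-w|/\rho(z))^{1/\delta}$. On the outer portion, the first inequality of \lemmaref{lemma:rhooutsideballs} applied to $z$ and $\gamma_0(t)$ reads $\rho(z)\lesssim|\gamma_0(t)-z|^{1-\delta}\rho(\gamma_0(t))^{\delta}$; solving for $1/\rho(\gamma_0(t))$ and integrating against $|z-w|\,\d t$ produces the required $(|z-w|/\rho(z))^{1/\delta}$ bound, since the exponents telescope as $1+(1-\delta)/\delta=1/\delta$.

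The hard part is the lower bound in \eqref{dist:estimate2}. I would reparametrize an arbitrary $\gamma$ by arc length $s\in[0,L]$ and set $h(s)=|\gamma(s)-z|$, so that $|h'(s)|\le 1$. On the set $U=\{s:h(s)>\rho(z)\}$, the second inequality of \lemmaref{lemma:rhooutsideballs} gives $\rho(\gamma(s))\lesssim h(s)^{1-\delta}\rho(z)^{\delta}$, and hence
$$\frac{1}{\rho(\gamma(s))}\gtrsim\frac{|h'(s)|}{h(s)^{1-\delta}\rho(z)^{\delta}}=\frac{|(h^{\delta})'(s)|}{\delta\rho(z)^{\delta}}.$$
The connected component of $U$ containing $L$ is an interval $(s_1,L]$ with $h(s_1)=\rho(z)$ and $h(L)=|z-w|$, so integration yields $\fdist(z,w)\gtrsim (|z-w|^{\delta}-\rho(z)^{\delta})/\rho(z)^{\delta}$, which matches $(|z-w|/\rho(z))^{\delta}$ up to a constant depending on $r$ (the residual range $r\rho(z)\le|z-w|\le 2^{1/\delta}\rho(z)$ is absorbed into \eqref{dist:estimate1}). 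The main obstacle lies precisely here: one must recognize that the bound $|h'|\le 1$ lets one slip $|h'|$ into the integrand for free, converting the $\phi$-length into the one-dimensional total variation of $h^{\delta}$, which then telescopes across the final crossing of $\partial D(z)$.
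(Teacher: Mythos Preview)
The paper does not supply its own proof of this lemma: it is quoted verbatim as \cite[Lemma 4]{marco2003interpolating} and used as a black box, so there is nothing in the present paper to compare your argument against.

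That said, your proposal is sound and is essentially the natural argument. The upper bounds via the straight segment and \lemmaref{lemma:rhoinballs}/\lemmaref{lemma:rhooutsideballs} are routine; your exponent bookkeeping $1+(1-\delta)/\delta=1/\delta$ in the far upper bound is correct. For the near lower bound your exit-time trick with $R=2r$ works: on $[0,\tau]$ the curve stays in $\overline{D^{R}(z)}$, so $\rho\simeq\rho(z)$ there, the Euclidean length is at least $R\rho(z)$, and hence the $\phi$-length is $\gtrsim R>r>|z-w|/\rho(z)$. For the far lower bound, your key observation---inserting the harmless factor $|h'|\le1$ to turn $\int ds/\rho(\gamma(s))$ into the total variation of $h^{\delta}/(\delta\rho(z)^{\delta})$ over the last excursion outside $D(z)$---is exactly the right idea, and the residual range $r\le|z-w|/\rho(z)\le 2^{1/\delta}$ is correctly absorbed into \eqref{dist:estimate1}. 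Two minor technical points you are implicitly using and should make explicit in a final write-up: piecewise $\mathcal C^1$ curves on $[0,1]$ have finite Euclidean length, so the arc-length reparametrization is legitimate; and since $h(0)=0<\rho(z)$, the component $(s_1,L]$ of $U$ containing $L$ indeed has $s_1>0$ with $h(s_1)=\rho(z)$.
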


Now we can state the pointwise estimates of the Bergman kernel.

\begin{theorem}[{\cite[Theorem 1.1, (3) and Proposition 2.11]{marzo2009pointwise}}]
	There exist constants $C >0$ and $\e >0$ (depending only on the doubling constant for $\Delta \phi$) such that
	\begin{equation} \label{eq:kernel1}
		\abs{K_z(w)} \leq C \frac{1}{\rho(z) \rho(w)}\frac{\ee^{\phi(z) + \phi(w)}}{\exp{\pth{\!\d_\phi(z, w)^\e}}}, \qquad \textrm{for any\, } z,w\in\C.
	\end{equation}
	Moreover, there is $r_0>0$ such that
	\begin{equation}\label{eq:kernel2} 
		\abs{K_z(w)} \simeq \gfhnorm{K_z} \gfhnorm{K_w} \simeq \frac{\ee^{\phi(z) + \phi(w)}}{\rho(z) \rho(w)}, \qquad (z \in\C, w \in D^{r_0}(z)).
      \end{equation}
\end{theorem}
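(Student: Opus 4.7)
The strategy splits into two essentially independent parts. For the off-diagonal Gaussian-type decay \eqref{eq:kernel1} the plan is to mimic the standard proof of Bergman-kernel decay via Hörmander's weighted $L^{2}$-estimate for $\dbar$. Fix $z\in\C$ and work with the perturbed weight $\Phi_{z}(w):=\phi(w)+c\,\fdist(z,w)^{\varepsilon}$, for positive constants $c,\varepsilon$ to be chosen small. I then take a smooth cut-off $\chi$ equal to $1$ on $D(z)$ and supported in $D^{2}(z)$, set $v:=K_{z}\,\dbar\chi$ (supported on a thin annulus where $K_{z}$ is uniformly controlled via \eqref{prop:kerneldiagonal}), and solve $\dbar u=v$ with respect to $\Phi_{z}$. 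The entire function $f:=\chi K_{z}-u$ then satisfies $f(z)=K_{z}(z)$ together with a weighted $L^{2}$-bound. Pairing via the reproducing identity $f(z)=\fpsc{f,K_{z}}$ and applying the pointwise estimate \eqref{eq:est} at $w$ converts this weighted bound into the desired pointwise estimate on $|K_{z}(w)|$.

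The main obstacle in this first part is verifying that the perturbation $\fdist(z,\cdot)^{\varepsilon}$ does not destroy the Hörmander condition, i.e.\ that $\Delta\Phi_{z}\simeq 1/\rho^{2}$. Using \eqref{dist:estimate1}--\eqref{dist:estimate2} together with the Lipschitz property \eqref{rho:Lipschitz} of $\rho$, one has to compute $\Delta(\fdist(z,\cdot)^{\varepsilon})$ and show that for $\varepsilon$ sufficiently small (depending only on the constant $\delta$ of \lemmaref{lemma:rhooutsideballs}) this Laplacian is of order at most $1/\rho^{2}$, so that $\Delta\Phi_{z}\simeq\Delta\phi\simeq 1/\rho^{2}$ with constants independent of $z$. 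This is the technical heart of \cite{marzo2009pointwise}, and where $\varepsilon$ in the statement of the theorem comes from.

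For the local equivalence \eqref{eq:kernel2}, the second $\simeq$ follows at once from \eqref{prop:kerneldiagonal} together with $\rho(z)\simeq\rho(w)$ on $D^{r_{0}}(z)$ (by \lemmaref{lemma:rhoinballs}) and $|\phi(z)-\phi(w)|\lesssim 1$ there (from $\Delta\phi\simeq 1/\rho^{2}$ and $\rho\simeq$ const on such a disk). For the first $\simeq$, Cauchy--Schwarz gives at once the upper bound $|K_{z}(w)|=|\fpsc{K_{w},K_{z}}|\le\gfhnorm{K_{z}}\gfhnorm{K_{w}}$. For the reverse inequality, note that $K_{z}(z)=\gfhnorm{K_{z}}^{2}$, and use that $K_{z}$ is holomorphic together with \lemmaref{lemma:estimates} and the already proved off-diagonal bound \eqref{eq:kernel1} to show that
$$
|K_{z}(w)-K_{z}(z)|\,\ef{w}\ef{z}\,\rho(z)\,\rho(w)
$$
can be made strictly smaller than any preassigned constant by choosing $r_{0}$ small, since $\fdist(z,w)\lesssim r_{0}$ on $D^{r_{0}}(z)$ by \eqref{dist:estimate1}. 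Combined with $|K_{z}(z)|\ef{z}\ef{z}\rho(z)^{2}\simeq 1$, this yields $|K_{z}(w)|\gtrsim\gfhnorm{K_{z}}\gfhnorm{K_{w}}$ on $D^{r_{0}}(z)$, completing the proof.
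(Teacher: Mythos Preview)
The paper does not prove this theorem: it is quoted without proof from \cite{marzo2009pointwise} (their Theorem~1.1 and Proposition~2.11), so there is nothing in the present paper to compare your proposal against. Your sketch is, in broad outline, the strategy actually carried out in that reference, but two points would need care in a genuine write-up.

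For \eqref{eq:kernel1}, the distance $\fdist(z,\cdot)$ is only Lipschitz, not $\Cont^{2}$, so $\Delta\bigl(\fdist(z,\cdot)^{\varepsilon}\bigr)$ is not a priori defined; in \cite{marzo2009pointwise} one replaces $\fdist(z,\cdot)^{\varepsilon}$ by an explicit smooth function with the same growth before invoking H\"ormander. Moreover, with the cut-off $\chi$ centred at $z$ and $f=\chi K_{z}-u$, you obtain an entire $f$ with controlled $\norm{f}_{2,\Phi_{z}}$, but the step ``pairing via $f(z)=\fpsc{f,K_{z}}$'' does not by itself isolate $|K_{z}(w)|$ for $w$ far from $z$ (and there is no reason $u(z)=0$, so $f(z)\ne K_{z}(z)$ in general). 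The standard arrangement places the cut-off near $w$, or equivalently uses the extremal characterisation of the kernel with respect to the perturbed weight.

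For the lower bound in \eqref{eq:kernel2}, the global estimate \eqref{eq:kernel1} by itself only gives $|K_{z}(w)|\,\ef{z}\ef{w}\rho(z)\rho(w)\lesssim 1$ near the diagonal, which is the \emph{same} order as $K_{z}(z)\,\ee^{-2\phi(z)}\rho(z)^{2}$; to make the difference $|K_{z}(w)-K_{z}(z)|$ genuinely small you must couple this with a Cauchy-type gradient estimate for the entire function $K_{z}$, which you allude to but do not state. With that extra step your argument does go through. In \cite{marzo2009pointwise} the lower bound is obtained by a different route, namely the peak-function construction from \cite{marco2003interpolating} together with the extremal property of the Bergman kernel.
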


The following two lemmas are very useful to prove the $L^p_{\phi}$-norm estimates of the Bergman kernels.

\begin{lemma} \label{lemma:integral:estimate}
	For every $\varepsilon>0$, $k\ge0$ and $r\ge1$ there is a constant $C_{\varepsilon,k}(r)>0$ such that 
$$
\int_{D^r(z)^c}\frac{|w-z|^k}{\exp{\pth{\!\d_\phi(w,z)^{\varepsilon}}}}\d\sigma(w)
\le C_{\varepsilon,k}(r)\,\rho(z)^k,\quad\mbox{for every $z\in\C$.}
$$
Moreover, $ C_{\varepsilon,k}(r)\to 0$, as $r\to\infty$, for any  $\varepsilon>0$ and $k\ge0$.
\end{lemma}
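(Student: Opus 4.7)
The plan is to split $D^r(z)^c$ into dyadic annuli around $z$ and estimate each annulus separately, producing a sum whose terms decay super-exponentially. For $j\ge0$ set
$$
A_j := D^{2^{j+1}r}(z)\setminus D^{2^j r}(z),
$$
so that $D^r(z)^c=\bigcup_{j\ge 0} A_j$ modulo a null set. On $A_j$ one has the trivial bound $|w-z|^k\le(2^{j+1}r\rho(z))^k$, and since $r\ge1$ gives $A_j\subset D(z)^c$, estimate \eqref{dist:estimate2} applied with $r=1$ yields
$$
\fdist(w,z)\gtrsim \pth{|w-z|/\rho(z)}^{\delta}\gtrsim (2^j r)^{\delta},
$$
whence $\exp\pth{\fdist(w,z)^\varepsilon}\gtrsim\exp\pth{c\,(2^j r)^{\delta\varepsilon}}$ for some $c=c(\varepsilon)>0$ independent of $z$, $j$ and $r$.

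The key technical step is to bound $\sigma(A_j)=\int_{A_j}\d A/\rho^2$. For this I would invoke the first inequality of \lemmaref{lemma:rhooutsideballs}, which for $w\in D(z)^c$ reads $\rho(w)\gtrsim\rho(z)^{1/\delta}|z-w|^{-(1-\delta)/\delta}$, hence
$$
\frac{1}{\rho(w)^2}\lesssim\frac{|z-w|^{2(1-\delta)/\delta}}{\rho(z)^{2/\delta}}\qquad (w\in A_j).
$$
Integrating this upper bound in polar coordinates over the disk $\{|w-z|<2^{j+1}r\rho(z)\}$ and observing that the exponent $2(1-\delta)/\delta+2$ collapses to $2/\delta$, all powers of $\rho(z)$ cancel and one obtains
$$
\sigma(A_j)\lesssim (2^{j+1}r)^{2/\delta},
$$
with constant depending only on $\delta$.

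Combining the three bounds on $A_j$ and summing in $j$ yields
$$
\int_{D^r(z)^c}\frac{|w-z|^k}{\exp(\fdist(w,z)^\varepsilon)}\d\sigma(w)
\le C\rho(z)^k\sum_{j\ge0}(2^{j+1}r)^{k+2/\delta}\exp\pth{-c(2^j r)^{\delta\varepsilon}},
$$
so the desired inequality holds with
$$
C_{\varepsilon,k}(r):=C\sum_{j\ge0}(2^{j+1}r)^{k+2/\delta}\exp\pth{-c(2^j r)^{\delta\varepsilon}}.
$$
This series converges for every $r\ge1$ because super-exponential decay dominates polynomial growth, and comparing the sum with $\int_{r}^{\infty}x^{k+2/\delta-1}\exp(-c\,x^{\delta\varepsilon})\d x$ shows $C_{\varepsilon,k}(r)\to 0$ as $r\to\infty$.

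The main obstacle I anticipate is the $\sigma$-mass estimate of the annulus: while the bounds on $|w-z|^k$ and on $\fdist(w,z)$ are direct applications of stated estimates, controlling $\sigma(A_j)$ requires invoking \lemmaref{lemma:rhooutsideballs} in the correct direction (the first inequality, giving a lower bound on $\rho(w)$) and then carefully tracking the exponents so that the powers of $\rho(z)$ cancel and only the dimensionless factor $2^j r$ survives, leaving an expression that both sums finitely and tends to zero in $r$.
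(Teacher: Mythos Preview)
Your argument is correct and follows the standard dyadic-annuli scheme that the paper implicitly invokes by referencing \cite[Lemma~2.7]{marzo2009pointwise}; the paper itself gives no proof beyond that citation, so your write-up is precisely the sort of verification intended. The three ingredients you isolate---the trivial bound on $|w-z|^k$, the lower bound on $\fdist$ from \eqref{dist:estimate2} (applied with the fixed radius $1$, so the constant is independent of the running $r$), and the $\sigma$-mass bound $\sigma(A_j)\lesssim(2^{j+1}r)^{2/\delta}$ via the first inequality of \lemmaref{lemma:rhooutsideballs}---are exactly the right ones, and your exponent bookkeeping in the $\sigma$-estimate (where $2(1-\delta)/\delta+2=2/\delta$ makes the powers of $\rho(z)$ cancel) is clean.

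One small remark on the tail: the comparison with $\int_r^{\infty}x^{k+2/\delta-1}e^{-cx^{\delta\varepsilon}}\d x$ is a bit loose because the sum samples geometrically, but the conclusion is correct. A direct way to see both convergence and $C_{\varepsilon,k}(r)\to0$ is to use $(2^jr)^{\delta\varepsilon}\ge\tfrac12(2^j)^{\delta\varepsilon}+\tfrac12 r^{\delta\varepsilon}$ (valid since $r\ge1$), which factors the sum as $r^{k+2/\delta}e^{-\frac{c}{2}r^{\delta\varepsilon}}$ times a convergent numerical series in $j$.
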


Lemma \ref{lemma:integral:estimate} is easily proved by following the proof of   
\cite[Lemma 2.7]{marzo2009pointwise}.

\begin{lemma} \label{lemma:kernel}\hspace*{\fill}
\begin{enumerate}[(a)]
		\vspace*{3pt}
		\item For every $r\ge1$ there is a constant $C(r)>0$ such that
$$
      \int_{D^r(z)^c} \abs{ K_z(w) \ef{w}} \d A(w) \leq C(r)  \ee^{\phi(z)}, \quad \textrm{for every\ } z\in\C,
$$
and $C(r)\to 0$, as $r\to\infty$.
		\vspace*{3pt}
		\item There exists a constant $C>0$ such that 
$$
      \int_\C \abs{ K_z(w) \ef{w}} \d A(w) \leq C  \ee^{\phi(z)}, \quad \textrm{for every\ } z\in\C.
$$
	\end{enumerate}	
\end{lemma}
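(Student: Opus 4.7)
The plan is to use the sharp pointwise estimate \eqref{eq:kernel1} and reduce both integrals to the form handled by \lemmaref{lemma:integral:estimate}. Applying \eqref{eq:kernel1} I get
$$
\abs{K_z(w)}\,\ef{w} \lesssim \frac{\ee^{\phi(z)}}{\rho(z)\,\rho(w)}\cdot \frac{1}{\exp(\fdist(z,w)^{\varepsilon})}.
$$
The trouble is that the measure appearing in \lemmaref{lemma:integral:estimate} is $\d\sigma=\!\d A/\rho^2$, whereas this estimate only produces one factor of $1/\rho(w)$. Writing $\!\d A(w)/\rho(w)=\rho(w)\,\d\sigma(w)$ exposes an extra $\rho(w)$ in the numerator that must be absorbed using the behavior of $\rho$ outside $D(z)$.

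For part~(a), assume $r\ge 1$, so that $D^r(z)^c\subset D(z)^c$ and \lemmaref{lemma:rhooutsideballs} gives $\rho(w)\lesssim \abs{z-w}^{1-\delta}\rho(z)^{\delta}$. Therefore
$$
\int_{D^r(z)^c}\abs{K_z(w)}\,\ef{w}\d A(w)
\lesssim \frac{\ee^{\phi(z)}}{\rho(z)}\int_{D^r(z)^c}\frac{\rho(w)}{\exp(\fdist(z,w)^{\varepsilon})}\d\sigma(w)
\lesssim \frac{\ee^{\phi(z)}\rho(z)^{\delta}}{\rho(z)} \int_{D^r(z)^c}\frac{\abs{z-w}^{1-\delta}}{\exp(\fdist(z,w)^{\varepsilon})}\d\sigma(w).
$$
Applying \lemmaref{lemma:integral:estimate} with $k=1-\delta\ge 0$ bounds the remaining integral by $C_{\varepsilon,1-\delta}(r)\,\rho(z)^{1-\delta}$; the powers of $\rho(z)$ cancel and the result is $C(r)\,\ee^{\phi(z)}$ with $C(r)=C_{\varepsilon,1-\delta}(r)\to 0$ as $r\to\infty$.

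For part~(b), split $\C=D^1(z)\cup D^1(z)^c$. Part~(a) (with $r=1$) handles the exterior piece. For the interior piece, \lemmaref{lemma:rhoinballs} yields $\rho(w)\simeq\rho(z)$ on $D^1(z)$, so \eqref{eq:kernel1} simplifies to $\abs{K_z(w)}\ef{w}\lesssim \ee^{\phi(z)}/\rho(z)^2$, and integrating over a disk of area $\pi\rho(z)^2$ produces a contribution $\lesssim \ee^{\phi(z)}$. Summing the two pieces gives the uniform estimate.

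The only real obstacle is the measure mismatch just noted; once one realizes that Christ's off-diagonal estimate for $\rho$ (\lemmaref{lemma:rhooutsideballs}) precisely compensates by supplying the missing factor of $\rho(w)$ at the cost of a polynomial weight $\abs{z-w}^{1-\delta}$, both parts fall out of \lemmaref{lemma:integral:estimate}, with the decay $C(r)\to 0$ inherited automatically from the same lemma.
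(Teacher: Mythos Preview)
Your proof is correct and follows essentially the same route as the paper: for part~(a) you use \eqref{eq:kernel1}, absorb the stray $\rho(w)$ via \lemmaref{lemma:rhooutsideballs}, and invoke \lemmaref{lemma:integral:estimate} with $k=1-\delta$, exactly as the paper does. The only minor difference is in the interior piece of part~(b): the paper bounds $|K_z(w)|$ by $\|K_z\|_{2,\phi}\|K_w\|_{2,\phi}$ and then applies \eqref{prop:kerneldiagonal}, whereas you reuse \eqref{eq:kernel1} directly; both yield the same $\ee^{\phi(z)}/\rho(z)^2$ bound and the argument is otherwise identical.
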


\begin{proof}\hspace*{\fill}\vspace*{3pt}

\noindent
(a)
By \eqref{eq:kernel1} there is $\e >0$ such that
$$
\int_{D^r(z)^c} \abs{K_z(w) \ef{w}} \d A(w) \lesssim  \frac{\ee^{\phi(z)} }{\rho(z)} \int_{D^r(z)^c} \frac{\rho(w)\d\sigma(w)}{\exp\pth{ \dist_\phi (z,w)^\e}}\quad(z\in\C,\,r>0)
$$
	Then, by \lemmaref{lemma:rhooutsideballs}, there is $\delta\in(0,1)$ such that
$$
\frac{e^{\phi(z)}}{\rho(z)} \frac{\rho(w)}{\exp\pth{ \dist_\phi (z,w)^\e}}
\lesssim  \frac{\ee^{\phi(z)}}{\rho(z)^{1-\delta}} \frac{ \abs{z-w}^{1-\delta}}{\exp\pth{ \dist_\phi (z,w)^\e}}
\quad(z\in\C,\,r\ge1,\,w\in D^r(z)).
$$
Therefore \lemmaref{lemma:integral:estimate} shows that
$$
\int_{D^r(z)^c} \abs{K_z(w) \ef{w}} \d A(w) \lesssim  C_{\varepsilon,1-\delta}(r)\,\ee^{\phi(z)}
\quad(z\in\C,\,r\ge1),
$$
so (a) holds.
\vspace*{3pt}

\noindent
(b)	By (a) we have that
$$
      \int_{D(z)^c} \abs{ K_z(w) \ef{w}} \d A(w) \lesssim\ee^{\phi(z)}\quad(z\in\C).
$$
On the other hand, since $\abs{K_z(w)} \le  \gfhnorm{K_z} \gfhnorm{K_w}$, \eqref{prop:kerneldiagonal} and  \eqref{eq:rhoinballs} show that
$$
\int_{D(z)}  \abs{K_z(w) \ef{w}} \d A(w)
\lesssim\frac{\ee^{\phi(z)}}{\rho(z)} \int_{D(z)} \frac{\d A(w)}{\rho(w)}  \simeq \ee^{\phi(z)}\quad(z\in\C ).
$$
	And the proof is complete.
\end{proof}

\begin{proposition} \label{prop:normkz}
For any $1\leq p \leq \infty$ we have that
$$
	\gfnorm{K_z} \simeq \ee^{\phi(z)} \rho(z)^{\frac{2}{p}-2} \qquad (z\in \C).
$$
\end{proposition}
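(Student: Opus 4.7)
My plan is to establish matching endpoint upper bounds at $p=1$ and $p=\infty$, derive the lower bound for all $1\le p\le\infty$ directly from the kernel asymptotics \eqref{eq:kernel2}, and interpolate for intermediate~$p$.

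For the \emph{lower bound}, fix $1\le p<\infty$. On $D^{r_0}(z)$ the kernel is comparable to $\ee^{\phi(z)+\phi(w)}/(\rho(z)\rho(w))$ by \eqref{eq:kernel2}, while $\rho(w)\simeq\rho(z)$ there by \eqref{eq:rhoinballs}, and $\abs{D^{r_0}(z)}\simeq\rho(z)^2$; integrating $(\abs{K_z(w)}\ef{w})^p$ over just this disk already yields $\gfnorm{K_z}^p \gtrsim \ee^{p\phi(z)}\rho(z)^{2-2p}$. For $p=\infty$, evaluating $\abs{K_z(w)}\ef{w}$ at any $w\in D^{r_0}(z)$ does the job.

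For the \emph{upper bounds at the endpoints}, the case $p=1$ is exactly \lemmaref{lemma:kernel}(b). For $p=\infty$, rather than doing a case-split through \eqref{eq:kernel1}, I would use the conjugate symmetry $K_z(w)=\overline{K_w(z)}$ together with the $p=1$ instance of \eqref{eq:est} applied to $K_w$:
\[
\abs{K_z(w)} = \abs{K_w(z)} \lesssim \frac{\ee^{\phi(z)}}{\rho(z)^2}\,\norm{K_w}_{1,\phi} \lesssim \frac{\ee^{\phi(z)+\phi(w)}}{\rho(z)^2},
\]
where the last step is the $p=1$ bound. Dividing by $\ee^{\phi(w)}$ and taking the essential supremum in $w$ gives $\gfinfnorm{K_z}\lesssim \ee^{\phi(z)}/\rho(z)^2$.

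For \emph{intermediate} $1<p<\infty$, the trivial pointwise inequality
\[
\bigl(\abs{K_z(w)}\ef{w}\bigr)^p \le \gfinfnorm{K_z}^{p-1} \cdot \abs{K_z(w)}\ef{w}
\]
integrates to $\gfnorm{K_z}^p \le \gfinfnorm{K_z}^{p-1}\norm{K_z}_{1,\phi}$, and plugging in the two endpoint upper bounds produces $\gfnorm{K_z}\lesssim \ee^{\phi(z)}\rho(z)^{2/p-2}$, as desired. The main delicate point is the $L^\infty$ upper bound: a head-on attack via \eqref{eq:kernel1} would need to balance a polynomial factor $|z-w|^{(1-\delta)/\delta}$ from \lemmaref{lemma:rhooutsideballs} (used to control $1/\rho(w)$ on $D(z)^c$) against the sub-exponential decay given by \eqref{dist:estimate2}, whereas the symmetry trick above reduces everything to the already-established $L^1$ bound.
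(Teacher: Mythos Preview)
Your proof is correct and essentially identical to the paper's own argument: the lower bound via \eqref{eq:kernel2} and \eqref{eq:rhoinballs}, the $p=1$ upper bound from \lemmaref{lemma:kernel}(b), the $p=\infty$ upper bound via the symmetry $K_z(w)=\overline{K_w(z)}$ combined with the $L^1$ estimate (the paper writes this out through \lemmaref{lemma:estimates} rather than citing \eqref{eq:est}, but it is the same step), and the intermediate $p$ case by splitting $|K_z\ee^{-\phi}|^p=|K_z\ee^{-\phi}|^{p-1}\cdot|K_z\ee^{-\phi}|$ and using the two endpoint bounds.
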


\begin{proof}
First note that the estimate $\gtrsim$ directly follows from \eqref{eq:kernel2} and \eqref{eq:rhoinballs}, so let us prove the opposite estimate. For $p=1$ it is just \lemmaref{lemma:kernel}(b). For $p=\infty$ it follows from \lemmaref{lemma:estimates}, \eqref{eq:rhoinballs} and \lemmaref{lemma:kernel}(b):
\begin{align*}
	\abs{K_z(w)\ef{w}} =\ef{w}\ee^{\phi(z)} \abs{K_w(z)\ef{z}} \lesssim \frac{\ef{w}\ee^{\phi(z)}}{\rho(z)^2} \int_{D(z)} \abs{K_w \ee^{-\phi}} \d A \lesssim  \frac{\ee^{\phi(z)}}{\rho(z)^2}.
\end{align*}
Finally, the case $1<p<\infty$ is a direct consequence of the two preceding cases:
\begin{align*}
	\int_\C \abs{K_z(w)\ef{w}}^p \d A(w) & = \int_\C \abs{K_z(w)\ef{w}} \abs{K_z(w)\ef{w}}^{p-1} \d A(w)\\
	& \lesssim \ee^{(p-1)\phi(z)}\rho(z)^{2-2p}\int_\C \abs{K_z(w)\ef{w}} \d A(w) \\
	& \lesssim  \ee^{p\phi(z)}\rho(z)^{2-2p}. \qedhere
\end{align*}
\end{proof}

%
%
%
%

\section{Bergman Projection, Duality and Complex Interpolation} \label{sec:proj}

It is straightforward to see from the reproducing property of the Bergman kernel for $\gFockh$ that the orthogonal projection $P_\phi \colon L^2_\phi \to \gFockh$ is the integral operator given by
$$P_\phi f(z) := \int_\C f(w) \overline{K_z(w)} \ee^{-2\phi(w)} \d A(w) \qquad (f\in L^2_\phi, z\in\C).$$

\begin{theorem} \label{thm:projbnd}
	$\Pf$ is a bounded linear operator from $\Lpf$ to $\gFock$, for any $1 \leq p \leq \infty$.
\end{theorem}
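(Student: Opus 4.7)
The plan is to reduce the statement to a Schur-type boundedness result for an integral operator on $L^p(\C,\d A)$, and then invoke the $L^1$-estimate of the Bergman kernel already provided by \lemmaref{lemma:kernel}(b).

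First I would observe that $f\in\Lpf$ if and only if $g:=f\ee^{-\phi}\in L^p(\C,\d A)$, with equal norms. The key pointwise estimate is
$$
|\Pf f(z)|\ee^{-\phi(z)}
\le\int_\C|g(w)|\,H(z,w)\,\d A(w),
\qquad\text{where } H(z,w):=|K_z(w)|\,\ee^{-\phi(z)-\phi(w)}.
$$
Because the kernel satisfies the hermitian symmetry $K_z(w)=\overline{K_w(z)}$, we have $H(z,w)=H(w,z)$. Applying \lemmaref{lemma:kernel}(b) both in $w$ (with $z$ fixed) and, by symmetry, in $z$ (with $w$ fixed) yields
$$
\sup_{z\in\C}\int_\C H(z,w)\,\d A(w)\lesssim 1,
\qquad
\sup_{w\in\C}\int_\C H(z,w)\,\d A(z)\lesssim 1.
$$

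With these two estimates in hand, the boundary cases are immediate. For $p=1$, Fubini gives
$$
\gfnorm{\Pf f}\le\int_\C|g(w)|\!\left(\int_\C H(z,w)\,\d A(z)\right)\!\d A(w)\lesssim\gfnorm{f}.
$$
For $p=\infty$, taking the essential supremum in $z$ gives
$$
|\Pf f(z)|\ee^{-\phi(z)}\le\gfinfnorm{f}\int_\C H(z,w)\,\d A(w)\lesssim\gfinfnorm{f}.
$$
For the remaining range $1<p<\infty$, I would apply Schur's test to the integral operator with kernel $H$ using the test function $h\equiv 1$: the two uniform estimates above are precisely what Schur's test requires, so this operator is bounded on $L^p(\C,\d A)$, and hence $\Pf$ is bounded on $\Lpf$. (Alternatively, having the $p=1$ and $p=\infty$ bounds, one could invoke Riesz--Thorin interpolation to cover $1<p<\infty$.)

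Finally, to conclude that $\Pf$ actually maps into $\gFock$ rather than merely $\Lpf$, it suffices to note that for $f\in L^2_\phi$ the projection $\Pf f$ belongs to $\gFockh\subset\H(\C)$, and a standard density/approximation argument (truncating to bounded compactly supported functions, which lie in $L^p_\phi\cap L^2_\phi$) combined with the pointwise estimate \eqref{eq:est} shows that $\Pf f$ is holomorphic for every $f\in\Lpf$. The only non-trivial ingredient in the entire argument is \lemmaref{lemma:kernel}(b); once that is available, the proof is essentially Schur's test. I do not foresee a genuine obstacle beyond packaging these ingredients correctly.
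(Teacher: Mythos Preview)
Your boundedness argument is essentially identical to the paper's: both majorize $|P_\phi f|\,\ee^{-\phi}$ by the integral operator with symmetric kernel $H(z,w)=|K_z(w)|\,\ee^{-\phi(z)-\phi(w)}$, invoke \lemmaref{lemma:kernel}(b) for the uniform $L^1$ bound, and then treat $p=1$ by Fubini, $p=\infty$ directly, and $1<p<\infty$ via H\"older plus Fubini (which is exactly Schur's test with constant weight, as you say).

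The one genuine difference is how you verify that $P_\phi f$ is holomorphic. The paper does this by differentiation under the integral sign: it exhibits, for each $z_0$, an $L^1$ majorant $G_{z_0}$ dominating $|f(w)\overline{K_z(w)}|\,\ee^{-2\phi(w)}$ uniformly for $z\in D(z_0)$, using the subharmonicity of $|K_w|$ and \propref{prop:normkz}. Your approximation route also works for $1\le p<\infty$: truncations $f_n$ lie in $L^2_\phi$, so $P_\phi f_n\in\gFockh$, and the $L^p_\phi$-boundedness just proved together with \eqref{eq:est} gives locally uniform convergence of $P_\phi f_n$ to $P_\phi f$. But be careful at $p=\infty$: bounded compactly supported functions are \emph{not} norm-dense in $L^\infty_\phi$, so you do not get $\|P_\phi f_n-P_\phi f\|_{\infty,\phi}\to0$, and \eqref{eq:est} alone does not close the argument. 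You can still repair this by showing pointwise convergence of $P_\phi f_n(z)$ to $P_\phi f(z)$ (dominated convergence, since $K_z\in L^1_\phi$) and then invoking Montel's theorem (the $P_\phi f_n$ are uniformly bounded in $F^\infty_\phi$, hence locally bounded by \eqref{eq:est}). The paper's differentiation-under-the-integral argument has the advantage of handling all $1\le p\le\infty$ uniformly without this extra step.
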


\begin{proof}
First we prove that $\Pf f \in \H(\C)$, for every $f\in\Lpf$.
We do that by differentiation under the integral sign. 
Let $f\in\Lpf$. 
Since the function
$$
F(z,w):=f(w)\overline{K_z(w)}\ee^{-2\phi(w)}\qquad(z,w\in\C)
$$ 	
satisfies that $F(\cdot,w)=f(w)K_w\ee^{-2\phi(w)}$ is an entire function, for every $w\in\C$, and $F(z,\cdot)=f\overline{K}_z\ee^{-2\phi}$ is a continuous function on $\C$, for every $z\in\C$, we only have to check that
for every $z_0\in\C$ there is $G_{z_0}\in L^1(\C)$ such that
\begin{equation}\label{locally:uniform:domination}
|F(z,w)|\lesssim G_{z_0}(w)\qquad(z\in D(z_0),\,w\in\C).
\end{equation}
But \lemmaref{lemma:rhoinballs} and the 
 subharmonicity of $|K_w|$ imply that there is a constant $r>1$ such that the function
$$
G_{z_0}(w):=
|f(w)|e^{-2\phi(w)}\int_{D^r(z_0)}|K_w(\zeta)|\d A(\zeta)
\qquad(w\in\C)
$$ 
satisfies~{\eqref{locally:uniform:domination}}.  Moreover,
 H\"{o}lder's inequality, Jensen's formula, Tonelli's theorem  and \propref{prop:normkz} show that $G_{z_0}\in L^1(\C)$.

Now we are going to prove that $P_{\phi}$ is a bounded linear 
operator on $\Lpf$. 
	 Observe that $\abs{\Pf f(z) \ef{z}} \leq Q_\phi(\abs{f\ee^{-\phi}})(z)$, for every $z\in\C$, where
		$$Q_\phi g(z) := \int_\C g(w) J(z,w) \d A(w) \qquad (z\in\C),$$
	and $J(z,w):= \abs{K_z(w)} \ef{z} \ef{w}$. In particular, it turns out that $\Pf$ is bounded on $\Lpf$ whenever $Q_\phi$ is bounded on $L^p(\C)$. First of all, we know by \lemmaref{lemma:kernel}(b) that there exists $C>0$ such that $\int_\C J(z,w)\d A(z) \leq C$, for every $w\in\C$. Then, if $1< p < \infty$ and $q$ is the conjugate exponent of $p$, by H\"older's inequality and Fubini's theorem we have
	\begin{align*}
		\int_\C \abs{Q_\phi g(z)}^p\d A(z) & \leq C^{p/q} \int_\C \int_\C \abs{g(w)}^p J(z,w) \d A(w) \d A(z) \\
		& = C^{p/q} \int_\C \abs{g(w)}^p \int_\C J(z,w) \d A(z)\d A(w) \\
		& \leq C^{p} \int_\C \abs{g(w)}^p \d A(w).
	\end{align*}
	For $p=1$, Fubini's theorem shows that
	\begin{align*}
		\norm{Q_\phi g}_{1} & \leq \int_\C  \abs{g(w)} \int_\C J(z,w) \d A(z)\d A(w) \leq C \norm{g}_{1}.
	\end{align*}
	Finally, if $g \in L^\infty(\C)$ then
	\begin{align*}
		\abs{Q_\phi g(z)} & \leq \norm{g}_\infty \int_\C J(z,w)\d A(w) \leq C \norm{g}_\infty, \qquad \textrm{for every\ } z\in\C.
	\end{align*}
	Hence the proof is complete.
\end{proof}

\begin{corollary} \label{corol:Pselfadjoint}
	If $1\leq p \leq \infty$ and $q$ is the conjugate exponent of $p$, then
	$$\fpsc{\Pf f, g} = \fpsc{f, \Pf g}, \qquad \textrm{for every\, } f\in \Lpf \textrm{\, and\, } g\in L^q_\phi.$$
\end{corollary}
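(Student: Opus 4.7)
The plan is to unfold the definition of $\fpsc{\Pf f, g}$ as a double integral, apply Fubini's theorem, and then exploit the Hermitian symmetry $K_z(w)=\overline{K_w(z)}$ of the Bergman kernel. Explicitly, starting from
$$
\fpsc{\Pf f, g} = \int_\C\!\Bigl(\int_\C f(w)\overline{K_z(w)}\ee^{-2\phi(w)}\d A(w)\Bigr)\overline{g(z)}\ee^{-2\phi(z)}\d A(z),
$$
once Fubini is justified, interchanging the order of integration and using $\overline{K_z(w)} = K_w(z)$ identifies the inner integral (in $z$) as $\overline{\Pf g(w)}$, which immediately gives $\fpsc{f,\Pf g}$.

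The only genuine step is therefore to establish the absolute integrability
$$
I := \int_\C\!\int_\C |f(w)\,K_z(w)\,g(z)|\,\ee^{-2\phi(w)-2\phi(z)}\d A(w)\d A(z) < \infty,
$$
for $f\in \Lpf$ and $g\in L^q_\phi$. I would handle this uniformly in $p$ via the positive kernel operator $Q_\phi$ from the proof of \thmref{thm:projbnd}, with kernel $J(z,w)=|K_z(w)|\ef{z}\ef{w}$. Rewriting
$$
I = \int_\C |g(z)|\ef{z}\,Q_\phi(|f|\ee^{-\phi})(z)\d A(z),
$$
and applying H\"older's inequality with conjugate exponents $(p,q)$ gives
$$
I \le \norm{Q_\phi(|f|\ee^{-\phi})}_{p}\,\norm{g\ee^{-\phi}}_{q} \lesssim \gfnorm{f}\,\norm{g}_{q,\phi},
$$
where the last estimate uses that $Q_\phi$ is bounded on $L^p(\C)$ for every $1\le p\le\infty$, as shown inside the proof of \thmref{thm:projbnd}. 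The endpoint cases $p=1$ and $p=\infty$ are covered by the same Hölder inequality (with the usual convention $\norm{\,\cdot\,}_\infty$ paired against $\norm{\,\cdot\,}_1$).

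The main obstacle, as often with ``self-adjointness'' statements for integral operators on $L^p$--$L^q$ pairs, is precisely this Fubini justification; once it is in place the identity reduces to the kernel symmetry $\overline{K_z(w)}=K_w(z)$ and is a one-line computation. No further machinery beyond \thmref{thm:projbnd} (in particular \lemmaref{lemma:kernel}(b), which underlies the $L^p$-boundedness of $Q_\phi$) is needed.
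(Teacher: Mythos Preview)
Your proposal is correct and coincides with the paper's own proof: the paper also says the identity follows from Fubini's theorem, with the absolute integrability verified via H\"older's inequality and the $L^p$-boundedness of the operator $Q_\phi$ established in the proof of \thmref{thm:projbnd}. Your write-up simply makes explicit the one-line argument the paper sketches.
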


\begin{proof}
	It follows from Fubini's theorem. Note that the hypothesis of Fubini's theorem holds due to H\"older's inequality and the $L^p$-boundedness of the operator $Q_\phi$.
\end{proof}

\begin{theorem} \label{thm:integrep}
	Let $1\leq p \leq \infty$. Then $f = \Pf f$, for every $f\in \gFock$.
\end{theorem}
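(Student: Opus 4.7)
The plan is to combine the reproducing property for $\gFockh$ with the $L^p_\phi$-boundedness of $\Pf$ (\thmref{thm:projbnd}) and the self-adjointness in \corolref{corol:Pselfadjoint}, via an approximation argument.

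First I would verify that the integral defining $\Pf f(z_0)$ is absolutely convergent for each $f\in\gFock$ and $z_0\in\C$. By H\"older's inequality (with conjugate exponents $p,q$) and \propref{prop:normkz},
$$
\abs{\Pf f(z_0)} = \abs{\fpsc{f,K_{z_0}}} \le \gfnorm{f}\,\norm{K_{z_0}}_{q,\phi} < \infty.
$$
Consequently, both the pointwise evaluation $\delta_{z_0}\colon f\mapsto f(z_0)$ (bounded by~\eqref{eq:est}) and the linear functional $\Lambda_{z_0}\colon f\mapsto \Pf f(z_0)$ are continuous on $\gFock$. On the subspace $\gFockh\cap\gFock$, these two functionals coincide thanks to the $\gFockh$ reproducing property, so the theorem reduces to extending this equality to all of $\gFock$.

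For $1\le p<\infty$ my strategy is to approximate $f\in\gFock$ by elements of $\gFockh\cap\gFock$. A natural candidate is $f_n:=\Pf(\chi_{D(0,n)}f)$: the truncation $\chi_{D(0,n)}f$ sits in $L^2_\phi\cap\Lpf$ (by compact support and local continuity of the entire function $f$), so \thmref{thm:projbnd} together with the $L^2_\phi$-theory places $f_n\in\gFockh\cap\gFock$, and the $\gFockh$ reproducing property yields $\Pf f_n=f_n$. Since $\chi_{D(0,n)}f\to f$ in $\Lpf$ by dominated convergence, the continuity of $\Pf$ gives $f_n\to\Pf f$ in $\gFock$. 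To close the loop without circularity, I would pair $f$ against a compactly supported $g\in L^q_\phi$ (hence in $L^2_\phi$) and invoke \corolref{corol:Pselfadjoint}:
$$
\fpsc{\Pf f-f, g} = \fpsc{f, \Pf g - g},
$$
where $\Pf g - g\in L^2_\phi$ is orthogonal to $\gFockh$ in the $L^2_\phi$ inner product. Density of $\gFockh\cap\gFock$ in $\gFock$ then transports this orthogonality to $\gFock$, forcing $\Pf f=f$.

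For $p=\infty$ the norm density argument must be replaced by a weak-$*$/dominated convergence variant. For $f\in\gFockinf$, the truncations $\chi_{D(0,n)}f$ lie in $L^2_\phi\cap L^\infty_\phi$ and are uniformly bounded in $\gfinfnorm{\cdot}$. Applying the $\gFockh$ reproducing identity to $\Pf(\chi_{D(0,n)}f)\in\gFockh$ and using \lemmaref{lemma:kernel}(b) to justify dominated convergence in $\Pf(\chi_{D(0,n)}f)(z_0)\to\Pf f(z_0)$ yields the conclusion. The hard part is establishing the density (or weak-$*$ density) of $\gFockh\cap\gFock$ in $\gFock$: the natural truncate-and-project scheme $f_n=\Pf(\chi_{D(0,n)}f)$ a priori identifies the limit as $\Pf f$ rather than $f$, so non-circularly pinning down this limit requires independent input, likely from the kernel decay in \lemmaref{lemma:kernel}(a) or the sampling/interpolation machinery of~\cite{marco2003interpolating}.
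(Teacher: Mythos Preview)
Your proposal identifies the right skeleton but leaves the central step unfilled, and you are honest about this at the end. The gap is precisely the density of $\gFockh\cap\gFock$ in $\gFock$: your truncate-and-project scheme $f_n=\Pf(\chi_{D(0,n)}f)$ only shows $f_n\to\Pf f$, and your duality argument via $\fpsc{f,\Pf g-g}$ requires that same density to pass from $\gFockh$-orthogonality to $\gFock$-orthogonality. So both branches are circular, and the guesses you make at the end (kernel decay, sampling) are not the tool the paper uses.

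The paper breaks the circle with a $\overline\partial$ estimate. It takes a \emph{smooth} cutoff $\psi_n$ and sets $f_n=\Pf(\psi_n f)$; then $u_n:=\psi_n f-f_n$ is the $L^2_\phi$-minimal solution of $\overline\partial u=f\,\overline\partial\psi_n$, and the Lindholm/Marzo--Ortega-Cerd\`a $L^p_\phi$ estimate for $\overline\partial$ (\cite[Proposition~1.4]{marzo2009pointwise}) gives $\gfnorm{u_n}\lesssim\gfnorm{\rho f\,\overline\partial\psi_n}\lesssim n^{-\delta}\gfnorm{f}$, using \lemmaref{lemma:rhooutsideballs} to control $\rho\,\overline\partial\psi_n$ on the annulus $n\le|z|\le 2n$. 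Hence $\gfnorm{f-f_n}\le\gfnorm{f-\psi_n f}+\gfnorm{u_n}\to0$ for $p<\infty$, which is exactly the non-circular approximation you were missing. For $p=\infty$ the same construction gives uniform boundedness of $\gfinfnorm{f-f_n}$ and pointwise convergence, and then dominated convergence (with $K_z\in L^1_\phi$) closes the argument as you sketched. The essential input you did not supply is this $\overline\partial$-based \lemmaref{lemma:approx}; once it is in place, the rest of your outline is correct.
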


The proof of \thmref{thm:integrep} follows the approach of Lindholm (see \cite[pp.~412-413]{lindholm}). First we need the following approximation lemma. 

\begin{lemma} \label{lemma:approx}
	Let $1\leq p \leq \infty$. For any $f\in\gFock$ there is a sequence $\{f_n\}_n$ of functions in $\gFockh \cap \gFock$ such that: 
	\begin{enumerate}[(a)]
		\vspace*{3pt}
		\item $\displaystyle \lim_{n\to\infty} \norm{f_n - f}_{p,\phi} = 0$, if $p < \infty$.
		\vspace*{3pt}
		\item $\displaystyle\sup_{n\geq 1} \norm{f_n-f}_{\infty,\phi} < \infty$, and $\displaystyle \lim_{n\to\infty} \abs{f_n(z) - f(z)}\ef{z} = 0$, for every $z\in\C$, if $p=\infty$.
	\end{enumerate}
\end{lemma}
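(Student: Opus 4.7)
Since this lemma is used to prove the integral representation $f=\Pf f$ for $f\in\gFock$ (\thmref{thm:integrep}), I cannot invoke that identity in the construction. Two natural but ultimately inadequate candidates are: (1) $f_n(z):=f(z)h_n(z)$ for an entire multiplier $h_n\to 1$, which is hard to make useful because bounded entire functions are constant and any non-constant $h_n$ grows somewhere; and (2) $f_n:=\Pf(\psi_n f)$ for a smooth cutoff $\psi_n\in\Cont^\infty_c(\C)$ equal to $1$ on $D(0,n)$, which lies in $\gFockh\cap\gFock$ by \thmref{thm:projbnd} but whose convergence to $f$ would essentially be the integral representation we are aiming for, hence circular.

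My preferred strategy is polynomial approximation. Writing $f(z)=\sum_{k\ge 0}a_k z^k$, set $f_n(z):=\sum_{k=0}^{n} a_k z^k$. The doubling hypothesis on $\Delta\phi$ forces $\phi$ to grow at infinity at least like a positive power of $|z|$ (since $\Delta\phi(D(0,2r))\gtrsim\Delta\phi(D(0,r))$ iterates to $\Delta\phi(D(0,r))\gtrsim r^{\alpha}$ for some $\alpha>0$, and then Jensen's formula forces $\phi(z)\gtrsim|z|^{\alpha}$ for large $|z|$), so $\ee^{-q\phi(z)}$ decays faster than every polynomial in $|z|$; consequently every polynomial belongs to $F^q_\phi$ for every $1\le q\le\infty$, and in particular $f_n\in\gFockh\cap\gFock$. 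The pointwise (in fact locally uniform) convergence $f_n\to f$ is automatic from the Taylor expansion, so the task reduces to a uniform control of $|f_n-f|\,\ee^{-\phi}$.

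For (a) with $p<\infty$, I would establish a pointwise majorant
$$
\abs{f_n(z)}\,\ee^{-\phi(z)}\lesssim H(z),\qquad H\in L^p(\C,\d A),
$$
uniform in $n$, so that dominated convergence yields $\gfnorm{f_n-f}\to 0$. I would construct $H$ by representing $f_n$ through a Cauchy integral on a circle of radius comparable to $|z|+\rho(z)$, bounding $\abs{f(\zeta)}$ on that circle via \eqref{eq:est}, and exploiting the decay of the Bergman kernel \eqref{eq:kernel1} together with \lemmaref{lemma:integral:estimate} to integrate out the resulting tail. For (b) with $p=\infty$, the same type of estimate in supremum form yields both the pointwise convergence and the uniform bound on $\gfinfnorm{f_n-f}$.

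The main obstacle will be this uniform majorant for the Taylor partial sums: in the classical radial case $\set{z^k}{k\ge 0}$ is orthogonal in $\gFockh$ and one recovers the bound from a Plancherel-type identity, but for a general doubling weight $\phi$ that orthogonality fails, and one must instead combine Cauchy's formula with the sharp geometric estimates on $\phi$, $\rho$ and the Bergman kernel collected in \secref{sec:proj} and the preceding section.
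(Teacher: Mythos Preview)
You dismissed precisely the approach the paper takes. Your objection to $f_n:=\Pf(\psi_n f)$ is that proving $f_n\to f$ ``would essentially be the integral representation'', but that is not how the paper controls the error. Write
\[
f-f_n=(f-\psi_n f)+u_n,\qquad u_n:=\psi_n f-\Pf(\psi_n f).
\]
The first term is handled directly (dominated convergence for $p<\infty$, a trivial bound for $p=\infty$). For the second term the paper does \emph{not} pass to a limit in the projection; instead it observes that $u_n$ is the $L^2_\phi$-minimal solution of $\overline\partial u=f\,\overline\partial\psi_n$ and invokes the H\"ormander-type estimate from \cite[Proposition~1.4]{marzo2009pointwise},
\[
\|u_n\|_{p,\phi}\lesssim\|\rho f\,\overline\partial\psi_n\|_{p,\phi}.
\]
Since $|\overline\partial\psi_n(z)|\lesssim n^{-1}\chi_{\{n\le|z|\le 2n\}}(z)$ and $\rho(z)\lesssim|z|^{1-\delta}$ on that annulus (\lemmaref{lemma:rhooutsideballs}), one gets $\|u_n\|_{p,\phi}\lesssim n^{-\delta}\|f\|_{p,\phi}\to0$. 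No circularity arises: the $\overline\partial$-estimate is the missing idea.

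Your alternative route via Taylor partial sums has a genuine gap at the step you yourself flag as ``the main obstacle''. For a general doubling weight the monomials are not orthogonal, so there is no a priori reason for $\sup_n\|f_n\|_{p,\phi}<\infty$, and your sketch of a Cauchy-integral bound does not produce one: representing $f-f_n$ on a circle $|\zeta|=R$ introduces the factor $(z/\zeta)^{n+1}$, which is useless when $|z|$ is close to $R$, while choosing $R$ much larger than $|z|$ forces you to compare $\ee^{-\phi(z)}$ with $\sup_{|\zeta|=R}\ee^{\phi(\zeta)}$ --- and the doubling hypothesis gives no control on that ratio. The appeal to the Bergman-kernel decay \eqref{eq:kernel1} and \lemmaref{lemma:integral:estimate} is a red herring here, since neither object appears in the Cauchy formula for the partial sums. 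Without a concrete uniform majorant in $L^p(\C,\d A)$ this argument does not close, whereas the $\overline\partial$ approach gives the needed quantitative decay in one line.
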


\begin{proof}
	Let $f\in \gFock$. Take a cutoff function $\psi \in \Cont^\infty(\C)$ such that $\psi(z) = 1$ if $\abs{z} \leq 1$, $0 < \psi(z) < 1$ if $1<\abs{z}<2$, and $\psi(z) = 0$ if $\abs{z} \geq 2$. Let $\psi_n(z) := \psi(z/n)$ and $f_n := \Pf (\psi_n f)$. Then $u_n := \psi_n f - f_n$
	is the $L^2_\phi$-minimal solution to the equation $\overline{\partial} u = f\overline{\partial} \psi_n$. By \cite[Proposition 1.4]{marzo2009pointwise},
	$$\norm{u_n}_{p,\phi} \lesssim \norm{\rho f \overline{\partial}\psi_n}_{p,\phi} \qquad (n\geq 1).$$
	Since 
	$$\overline{\partial} \psi_n(z) =  \frac{1}{n}\; (\overline{\partial} \psi)\pth{\frac{z}{n}},$$
	\eqref{eq:rhoinballs} and \lemmaref{lemma:rhooutsideballs} show that
	$$\abs{\rho(z) \overline{\partial}\psi_n(z) } \lesssim \frac{\abs{z}^{1-\delta}}{n} \, \chi_{D(0,2n)\setminus D(0,n)}(z) \lesssim \frac{1}{n^\delta} \qquad (n\geq 1, z\in \C),$$
	and so
	$$\norm{u_n}_{p,\phi} \lesssim \norm{\rho f \overline{\partial}\psi_n}_{p,\phi} \lesssim \frac{1}{n^\delta} \norm{f}_{p,\phi} \qquad (n\geq 1).$$
	Therefore
	$$\norm{f-f_n}_{p,\phi} \leq \norm{f - \psi_n f}_{p,\phi} + \norm{u_n}_{p,\phi}
		\lesssim \norm{f - \psi_n f}_{p,\phi} + \frac{1}{n^\delta}\norm{f}_{p,\phi}\quad (n\geq 1).$$
		
	\noindent (a) If $p<\infty$ then $\norm{f - \psi_n f}_{p,\phi} \to 0$ and so $\norm{f-f_n}_{p,\phi} \to 0$.
	
	\vspace*{3pt}
	\noindent (b) If $p=\infty$ then $\norm{f - \psi_n f}_{\infty,\phi} \leq \norm{f}_{\infty, \phi}$ and so the first assertion of (b) holds. The second assertion follows from the estimate
	\begin{align*}
		\abs{f(z) - f_n(z)} \ef{z} & \leq \abs{f(z) -\psi_n(z) f(z)} \ef{z} + \abs{u_n(z)} \ef{z} \\
		& \lesssim \abs{f(z) -\psi_n(z) f(z)} \ef{z} + \frac{1}{n^\delta}\norm{f}_{\infty,\phi},
	\end{align*}
	since $\abs{f(z) -\psi_n(z) f(z)} \ef{z} \to 0$.
\end{proof}

\begin{proof}[Proof of \thmref{thm:integrep}]
	Let $f\in\gFock$. Take $\{f_n\}_n$ as in \lemmaref{lemma:approx}. Recall that $f_n \in \gFockh$ and so $\Pf f_n = f_n$. We distinguish the following two cases:
	\vspace*{4pt}
	
	\noindent Case 1: $p < \infty$. Since $\Pf$ is bounded on $\Lpf$ (see \thmref{thm:projbnd}) and $ \norm{f_n - f}_{p,\phi} \to 0$, we have that $\norm{\Pf f_n - \Pf f}_{p,\phi} \to 0$. Therefore
	$$\Pf f(z) = \lim_{n\to\infty} \Pf f_n(z) = \lim_{n\to\infty} f_n(z) = f(z), \quad \textrm{for every\ } z\in\C.$$

	\noindent Case 2: $p = \infty$. Fix $z\in\C$. First note that 
	$$\abs{\Pf f(z) - \Pf f_n(z)} \leq \int_\C \abs{f(w)-f_n(w)}\ef{w} \abs{K_z(w)} \ef{w} \d A(w) \longrightarrow 0,$$
	by the dominated convergence theorem, since $K_z \in L^1_\phi$ and \lemmaref{lemma:approx}(b). Then
	$$\abs{\Pf f(z) -f(z)} \leq \abs{\Pf f(z) - \Pf f_n(z)} + \abs{f_n(z) - f(z)} \longrightarrow 0,$$
	and hence $\Pf f(z) = f(z)$.
\end{proof}

Observe that Theorems~\ref{thm:projbnd} and \ref{thm:integrep} show that $\Pf$ is a bounded projection of $\Lpf$ onto $\gFock$ (that is, $\Pf \colon \Lpf \to \Lpf$ is a bounded linear operator such that $\Pf \circ \Pf = \Pf$ and $\Pf \Lpf = \gFock$), for any $1\leq p \leq \infty$. As a consequence of that fact we will obtain results on complex interpolation and duality of the generalized Fock spaces. 

\begin{theorem}
	Let $1\leq p_0 \leq p_1 \leq \infty$ and $0\leq \theta \leq 1$. Then $[L^{p_0}_\phi, L^{p_1}_\phi]_\theta = L^{p_\theta}_\phi$ and $[F^{p_0}_\phi, F^{p_1}_\phi]_\theta = F^{p_\theta}_\phi$ with equivalent norms, where
	$$\frac{1}{p_\theta} = \frac{1-\theta}{p_0} + \frac{\theta}{p_1}.$$
\end{theorem}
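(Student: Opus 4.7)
The plan is to first establish the interpolation identity for the $L^{p}_\phi$ scale and then transfer it to the Fock space scale via a retract argument based on the Bergman projection.

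For the $L^{p}_\phi$ identity, I would observe that the multiplication map $M\colon f\mapsto f\ee^{-\phi}$ is an isometric isomorphism from $\Lpf$ onto $L^p(\C,\d A)$ for every $1\le p\le\infty$. Since complex interpolation is a functor that respects isometric isomorphisms of compatible couples, one has
\[
[L^{p_0}_\phi, L^{p_1}_\phi]_\theta \;\cong\; [L^{p_0}(\C), L^{p_1}(\C)]_\theta \;=\; L^{p_\theta}(\C),
\]
where the last equality is the classical interpolation theorem for Lebesgue spaces on a $\sigma$-finite measure space (see, e.g., Bergh–L\"ofstr\"om, Theorem~5.1.1). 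Pulling back through $M^{-1}$ yields the first claimed identity with equal norms.

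For the Fock space identity I would invoke the standard retract principle of interpolation theory. The inclusion $i_p\colon \gFock\hookrightarrow\Lpf$ is an isometry, and \thmref{thm:projbnd} together with \thmref{thm:integrep} asserts that $\Pf\colon\Lpf\to\gFock$ is a bounded linear operator satisfying $\Pf\circ i_p=\mathrm{Id}_{\gFock}$ for every $1\le p\le\infty$. Hence $(F^{p_0}_\phi,F^{p_1}_\phi)$ is a complemented subcouple of $(L^{p_0}_\phi,L^{p_1}_\phi)$, and the abstract retract theorem for the complex method (Bergh–L\"ofstr\"om, Theorem~4.1.5 and Theorem~6.4.2) gives
\[
[F^{p_0}_\phi, F^{p_1}_\phi]_\theta \;=\; \Pf\bigl([L^{p_0}_\phi, L^{p_1}_\phi]_\theta\bigr) \;=\; \Pf(L^{p_\theta}_\phi) \;=\; F^{p_\theta}_\phi,
\]
with equivalent norms. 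The containment $\supset$ comes from $\Pf$ fixing elements of the Fock space, while $\subset$ uses that $\Pf$ is a bounded map of couples carrying $[L^{p_0}_\phi,L^{p_1}_\phi]_\theta$ into $[F^{p_0}_\phi,F^{p_1}_\phi]_\theta$.

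The main subtlety is the endpoint $p_1=\infty$, where Calder\'on's complex method is more delicate and one cannot rely on density of simple functions. However, the retract principle is formal: it only requires a uniformly bounded projection at each endpoint, and this is precisely what \thmref{thm:projbnd} supplies for $p=\infty$. Consequently, once the $L^p_\phi$ identity is obtained from the classical one by the isometry $M$, the Fock space identity follows without any further use of the subharmonic weight or the geometry of $\rho$.
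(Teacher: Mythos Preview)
Your proposal is correct and follows essentially the same approach as the paper: both use the multiplication isometry $f\mapsto f\ee^{-\phi}$ to reduce the $L^p_\phi$ interpolation to the classical $L^p$ result, and both then apply the retract/complemented-subspace principle via the bounded Bergman projection $\Pf$ (Theorems~\ref{thm:projbnd} and~\ref{thm:integrep}) to obtain the Fock space identity. The paper cites \cite[Lemma~7.11]{kalton2007interpolation} where you cite Bergh--L\"ofstr\"om, but the argument is the same.
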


\begin{proof}
	Let $1\leq p \leq \infty$. Then the formulas $Ef = f\ee^{-\phi}$ and $Rg = g\ee^{\phi}$ define bounded linear operators $E\colon \Lpf \to L^p(\C)$ and $R\colon L^p(\C) \to \Lpf$. (Indeed, $E$ and $R$ are topological isomorphisms.) Moreover, they satisfy $R \circ E = I$, where $I$ is the identity map on $\Lpf$. So, using the terminology of \cite[p.~151]{kalton2007interpolation}, $\{L^{p_0}_\phi, L^{p_1}_\phi\}$ is a retract of $\{L^{p_0}(\C), L^{p_1}(\C) \}$, and therefore the first part of \cite[Lemma 7.11]{kalton2007interpolation} shows that
	$$[L^{p_0}_\phi, L^{p_1}_\phi]_\theta = R([L^{p_0}(\C), L^{p_1}(\C)]_\theta) = R(L^{p_\theta}(\C)) = L^{p_\theta}_\phi$$
	with equivalent norms. Recall that the norms of $R(L^{p_\theta}(\C))$ (see \cite[(7.40)]{kalton2007interpolation}) and $L^{p_\theta}_\phi$ are equivalent because $R$ is a topological isomorphism. 

On the other hand, as we observed above, by Theorems \ref{thm:projbnd} and \ref{thm:integrep} we know that $\Pf$ is a bounded projection of $\Lpf$ onto $\gFock$. Then the second part of \cite[Lemma 7.11]{kalton2007interpolation} implies that
	$$[F^{p_0}_\phi, F^{p_1}_\phi]_\theta = [\Pf L^{p_0}_\phi, \Pf L^{p_1}_\phi]_\theta= \Pf( [L^{p_0}_\phi, L^{p_1}_\phi]_\theta) = \Pf L^{p_\theta}_\phi = F^{p_\theta}_\phi$$
	with equivalent norms. Note that the norms of $\Pf L^{p_\theta}_\phi$ (see \cite[(7.40)]{kalton2007interpolation}) and $F^{p_\theta}_\phi$ are equivalent by the open mapping theorem.
\end{proof}

It is well-known that if $1\leq p < \infty$ and $q$ is the conjugate exponent of $p$ then $(\Lpf)^*$ can be (isometrically) identified with $L_\phi^q$ by means of the integral pairing $\psc{\,\cdot\, ,\cdot\,}_\phi$ defined by \eqref{eq:fpsc}. Namely, the mapping
\begin{equation}\label{dual:Lp}
g\in L^q_{\phi} \longmapsto  \langle\,\cdot\,,g\rangle_{\phi}\in(L^p_{\phi})^{*}
\end{equation}
is an isometric antilinear isomorphism.
 From this fact and the boundedness of the projection $\Pf$ we are able to describe the dual of  $\gFock$, for $1\leq p <\infty$.

\begin{theorem} \label{thm:dual}
	Let $1\leq p < \infty$ and let $q$ be the conjugate exponent of $p$. Then $(\gFock)^*$ can be identified with $F^q_\phi$ (with equivalent norms) by means of the integral pairing $\psc{\,\cdot\, ,\cdot\,}_\phi$ given by \eqref{eq:fpsc}. Namely, the mapping
\begin{equation}\label{dual:Fp}
g\in F^q_{\phi} \longmapsto  \langle\,\cdot\,,g\rangle_{\phi}\in(F^p_{\phi})^{*}
\end{equation}
is a topological antilinear isomorphism.
\end{theorem}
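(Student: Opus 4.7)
The plan is to mimic the standard Bergman-space duality argument, using the three tools now at our disposal: the well-known $(L^p_\phi)^*\cong L^q_\phi$ identification stated in \eqref{dual:Lp}, the boundedness of $P_\phi$ on $L^p_\phi$ (\thmref{thm:projbnd}), and the reproducing identity $f=P_\phi f$ on $F^p_\phi$ together with the self-adjointness \corolref{corol:Pselfadjoint}.

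\textbf{Continuity of the map.} First, for $g\in F^q_\phi$, H\"older's inequality applied to $f\ef{\cdot}$ and $\bar g\ef{\cdot}$ in $L^p(\C)$ and $L^q(\C)$ gives
$$
\abs{\fpsc{f,g}}\le\gfnorm{f}\norm{g}_{q,\phi}\qquad(f\in F^p_\phi),
$$
so the map in \eqref{dual:Fp} is well defined, antilinear, and bounded with norm $\le\norm{g}_{q,\phi}$.

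\textbf{Injectivity.} If $\fpsc{f,g}_\phi=0$ for every $f\in F^p_\phi$, then, since $K_z\in F^p_\phi$ for every $1\le p\le\infty$ by \propref{prop:normkz}, we may take $f=K_z$ and use the reproducing property to obtain
$$
0=\fpsc{K_z,g}=\overline{\fpsc{g,K_z}}=\overline{g(z)}\qquad(z\in\C),
$$
so $g\equiv 0$.

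\textbf{Surjectivity and the reverse norm estimate.} Given $L\in(F^p_\phi)^*$, use Hahn–Banach to extend $L$ to a bounded linear functional $\widetilde L$ on $L^p_\phi$ with $\|\widetilde L\|=\|L\|$. By \eqref{dual:Lp} there is $h\in L^q_\phi$ with $\norm{h}_{q,\phi}=\|\widetilde L\|$ such that $\widetilde L(f)=\fpsc{f,h}$ for every $f\in L^p_\phi$. For $f\in F^p_\phi$, \thmref{thm:integrep} gives $f=P_\phi f$, and then \corolref{corol:Pselfadjoint} yields
$$
L(f)=\fpsc{P_\phi f,h}=\fpsc{f,P_\phi h}.
$$
Set $g:=P_\phi h$. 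By \thmref{thm:projbnd}, $g\in F^q_\phi$ with $\norm{g}_{q,\phi}\lesssim\norm{h}_{q,\phi}=\|L\|$, which exhibits $L$ as the image of $g$ under \eqref{dual:Fp} and simultaneously proves the lower estimate $\norm{g}_{q,\phi}\lesssim\|L\|$. Combined with the upper estimate from H\"older, this shows the map is a topological antilinear isomorphism with equivalent norms.

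The only mildly delicate step is the surjectivity, where two subtleties must be handled carefully: the Hahn–Banach extension is available for all $1\le p<\infty$ (including the case $p=1$, $q=\infty$), and the self-adjointness \corolref{corol:Pselfadjoint} requires $h\in L^q_\phi$ and $f\in L^p_\phi$, which both hold. I do not expect a genuine obstacle; the whole argument is a straightforward consequence of the projection theory established in Section~\ref{sec:proj}.
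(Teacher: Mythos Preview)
Your proof is correct and uses the same essential ingredients as the paper: Hahn--Banach extension, the $L^p_\phi$--$L^q_\phi$ duality~\eqref{dual:Lp}, the self-adjointness of $P_\phi$ (\corolref{corol:Pselfadjoint}), the reproducing identity (\thmref{thm:integrep}), and the boundedness of $P_\phi$ (\thmref{thm:projbnd}). The paper packages these same steps more abstractly---identifying $(F^p_\phi)^\perp$ with $\ker P_\phi$ in $L^q_\phi$, passing to the quotient $L^q_\phi/(F^p_\phi)^\perp$, and invoking the open mapping theorem---whereas you verify injectivity, surjectivity, and the two norm inequalities directly; the content is the same.
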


\begin{proof}
Let 
$(F^p_{\phi})^{\bot}:=\{\,g\in L^q_{\phi}\,:\,\langle f, g\rangle_{\phi}=0, \mbox{ for every $f\in F^p_{\phi}$}\,\}$.
Then the fact that the map \eqref{dual:Lp} is an antilinear isometric isomorphism and a well-known consequence of the Hahn-Banach theorem show that the operator $S:L^q_{\phi}/(F^p_{\phi})^{\bot}\longrightarrow (F^p_{\phi})^{*}$, defined by
$S(g+(F^p_{\phi})^{\bot})= \langle\,\cdot\,,g\rangle_{\phi}$,
is an antilinear isometric isomorphism as well. On the other hand, 
$$
(F^p_{\phi})^{\bot}=\{\,g\in L^q_{\phi}\,:\,\langle f, P_{\phi}g\rangle_{\phi}=0, 
\mbox{ for every $f\in L^p_{\phi}$}\,\}=\{\,g\in L^q_{\phi}\,:\, P_{\phi}g=0\,\},
$$
where the first identity is a consequence of Corollary~\ref{corol:Pselfadjoint} and Theorem~\ref{thm:integrep}, while the second identity follows by duality. Therefore, since
$P_{\phi}$ is a bounded linear operator from the Banach space $L^q_{\phi}$ onto its closed subspace $F^q_{\phi}$, the open mapping theorem shows that the ``quotient'' operator
$\widetilde{P}_{\phi}:L^q_{\phi}/(F^p_{\phi})^{\bot}\longrightarrow F^q_{\phi}$, defined by 
$\widetilde{P}_{\phi}(g+(F^p_{\phi})^{\bot})=P_{\phi} g$, is a linear topological isomorphism.
Hence the operator  $S\circ(\widetilde{P}_{\phi})^{-1}$, which coincides with the mapping \eqref{dual:Fp},
 is an antilinear topological isomorphism, and the proof is complete.	
\end{proof}

\begin{corollary} \label{corol:dense}
	The linear span $E$ of all the reproducing kernels $K_z$, $z\in\C$, is dense in $\gFock$, for any $1\leq p< \infty$.
\end{corollary}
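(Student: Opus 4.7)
The plan is to prove this by a standard Hahn--Banach duality argument, using Theorem~\ref{thm:dual} together with the reproducing formula provided by Theorem~\ref{thm:integrep}. Suppose, towards a contradiction, that $E$ is not dense in $F^p_\phi$ for some $1\le p<\infty$. Then by the Hahn--Banach theorem there exists a nonzero continuous linear functional $\Lambda$ on $F^p_\phi$ that vanishes on $E$; equivalently, $\Lambda(K_z)=0$ for every $z\in\C$.

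By \thmref{thm:dual}, this $\Lambda$ is represented by some nonzero $g\in F^q_\phi$ (where $q$ is the conjugate exponent of $p$) via the integral pairing, so the vanishing condition becomes
$$\fpsc{K_z,g}=0,\qquad \text{for every } z\in\C.$$
The key step is then to identify $\overline{\fpsc{K_z,g}}$ with $g(z)$. From the definition of the pairing and the fact that $K_z(w)=\overline{K_w(z)}$,
$$\overline{\fpsc{K_z,g}} = \fpsc{g,K_z} = \int_\C g(w)\,\overline{K_z(w)}\,\ee^{-2\phi(w)}\,\d A(w) = P_\phi g(z).$$
Since $g\in F^q_\phi$, \thmref{thm:integrep} gives $P_\phi g=g$, so $\overline{\fpsc{K_z,g}}=g(z)$. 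The integral defining the pairing is absolutely convergent by H\"older's inequality, because $g\in L^q_\phi$ and $K_z\in F^p_\phi$ by \propref{prop:normkz}, so this computation is legitimate.

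Combining the two displays, $g(z)=0$ for every $z\in\C$, i.e.\ $g=0$, contradicting the choice of $\Lambda\ne 0$. Hence $E$ must be dense in $F^p_\phi$. No step here is really an obstacle: the only thing one has to be mildly careful about is that the reproducing identity $g=\fpsc{g,K_z}$ is available for $g\in F^q_\phi$ (not merely for $g\in F^2_\phi$), and this is exactly what \thmref{thm:integrep} guarantees, while \propref{prop:normkz} supplies the integrability needed to justify using the pairing against $K_z$.
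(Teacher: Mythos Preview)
Your proof is correct and follows essentially the same approach as the paper: a Hahn--Banach argument combined with the duality \thmref{thm:dual} and the reproducing formula from \thmref{thm:integrep}. The paper's version is simply more terse, writing directly $f(z)=P_\phi f(z)=\psc{f,K_z}_\phi=0$ for the annihilating $f\in F^q_\phi$.
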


\begin{proof}
	By \thmref{thm:dual} and the Hahn-Banach theorem, we only have to prove that if $q$ is the conjugate exponent of $p$ and $f\in F^q_\phi$ satisfies $\psc{f, g}_\phi = 0$, for every $g\in E$, then $f=0$. And that follows from \thmref{thm:integrep}, since $f(z) = P_\phi f(z) = \psc{f, K_{z}}_\phi = 0$, for every $z\in\C$.
\end{proof}

\section{Fock-Carleson Measures}\label{sec:fock-carleson:measures}

Let $\M$ denote the set of all locally finite positive Borel measures on $\C$.
From now on it will be useful to consider the notion of {\em $p$-normalized reproducing kernel at $z\in \C$}:
$$
K_{p,z}(w) := \frac{K_z(w)}{\gfnorm{K_z}} \qquad (1\leq p <\infty, w\in \C).
$$ 

 The {\em Berezin transform of $\mu\in\M$} is defined to be
 $$
 \tld{\mu}(z) := \int_\C \abs{K_{2,z}(w)}^2 \efh{w} \d \mu(w) \qquad (z\in\C).
 $$
 For every $r>0$, the {\em $r$-averaging transform of $\mu\in\M$} is defined by
 $$
 \widehat{\mu}_r (z) := \frac{\mu(D^r(z))}{ A(D^r(z))}\simeq \frac{\mu(D^r(z))}{\rho(z)^2} \qquad (z\in \C).
 $$
\subsection{Fock-Carleson measures for $\gFock$}
Let $\mu\in\M$ and $1\leq p < \infty$. We say that $\mu$ is a \emph{Fock-Carleson measure} for $\gFock$ if there exists a constant $C>0$ such that
	\begin{equation} \label{eq:carleson1}
		\int_\C \abs{f \ee^{-\phi}}^p\d \mu \leq C \int_\C \abs{f \ee^{-\phi}}^p \d A, \qquad \textrm{for every\, } f\in\gFock.
	\end{equation}
In other words, $\mu$ is a Fock-Carleson measure for $\gFock$ when the inclusion operator
 $i_{p,\mu}\colon\gFock\hookrightarrow L^p(\C, \ee^{-p\phi}\d \mu)$ is bounded. Our next  result characterizes the Fock-Carleson measures for $\gFock$ in terms of the boundedness of their Berezin and averaging transforms.
 
\begin{theorem} \label{thm:carleson1}
Let $\mu\in\M$ and $1\le p< \infty$. Then the following assertions are equivalent:
	\begin{enumerate}
		\item $\mu$ is a Fock-Carleson measure for $\gFock$. \label{carl11}
		\item  There is $r>0$ such that $\widehat{\mu}_r \in L^{\infty}(\C)$. \label{carl12}
		\item $\tld{\mu} \in L^{\infty}(\C)$. \label{carl13}
	\end{enumerate}
	Moreover, 
$\|i_{p,\mu}\|^p\simeq\|\widehat{\mu}_r\|_{\infty}\simeq\|\tld{\mu}\|_{\infty}$.
\end{theorem}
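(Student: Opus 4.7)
The plan is to establish the implications $(1)\Leftrightarrow(2)$ and $(2)\Leftrightarrow(3)$, tracking the quantitative constants so as to obtain the norm equivalences $\|i_{p,\mu}\|^p\simeq\|\widehat\mu_r\|_\infty\simeq\|\tld\mu\|_\infty$. Note that (3) is $p$-independent, and indeed $(2)\Leftrightarrow(3)$ will also be $p$-free, reflecting the purely geometric nature of both transforms.

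For $(2)\Rightarrow(1)$, the subharmonicity-type bound of \lemmaref{lemma:estimates} gives, for $f\in\gFock$ and any fixed $r>0$,
\begin{equation*}
\abs{f(z)\ef{z}}^p \lesssim \int_{D^r(z)}\abs{f(w)\ef{w}}^p\dv{w}.
\end{equation*}
Integrating against $\d\mu(z)$ and applying Fubini---after using \lemmaref{lemma:rhoinballs} to swap ``$w\in D^r(z)$'' with ``$z\in D^{r'}(w)$'' for some $r'\simeq r$---yields
\begin{equation*}
\int_{\C}\abs{f\,\ee^{-\phi}}^p\d\mu \,\lesssim\, \int_{\C}\abs{f(w)\ef{w}}^p\,\frac{\mu(D^{r'}(w))}{\rho(w)^2}\,\d A(w) \,\lesssim\, \|\widehat\mu_{r'}\|_\infty\,\gfnorm{f}^p.
\end{equation*}
A bounded-overlap covering argument based on \lemmaref{estimate:doubling} and \lemmaref{lemma:rhoinballs} shows $\|\widehat\mu_r\|_\infty\simeq\|\widehat\mu_{r'}\|_\infty$ for any two fixed radii, so this provides $\|i_{p,\mu}\|^p\lesssim\|\widehat\mu_r\|_\infty$.

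For $(1)\Rightarrow(2)$, I would test the Carleson inequality on the $p$-normalized reproducing kernels $K_{p,z}$. By \propref{prop:normkz}, the lower estimate \eqref{eq:kernel2}, and the comparability $\rho(w)\simeq\rho(z)$ on $D^{r_0}(z)$ from \lemmaref{lemma:rhoinballs}, one obtains $\abs{K_{p,z}(w)\ef{w}}^p\gtrsim\rho(z)^{-2}$ for $w\in D^{r_0}(z)$; hence
\begin{equation*}
\|i_{p,\mu}\|^p \,\ge\, \int_{\C}\abs{K_{p,z}\,\ee^{-\phi}}^p\d\mu \,\gtrsim\, \frac{\mu(D^{r_0}(z))}{\rho(z)^2}\,\simeq\,\widehat\mu_{r_0}(z),
\end{equation*}
proving (2) with $r=r_0$ and $\|\widehat\mu_{r_0}\|_\infty\lesssim\|i_{p,\mu}\|^p$. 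The direction $(3)\Rightarrow(2)$ is entirely analogous: \eqref{eq:kernel2} and \eqref{prop:kerneldiagonal} give $\abs{K_{2,z}(w)\ef{w}}^2\gtrsim\rho(z)^{-2}$ on $D^{r_0}(z)$, so $\tld\mu(z)\gtrsim\widehat\mu_{r_0}(z)$.

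The remaining implication $(2)\Rightarrow(3)$ is the main technical step. From \eqref{eq:kernel1} and \eqref{prop:kerneldiagonal}, the integrand of $\tld\mu(z)$ is controlled by $\rho(w)^{-2}\exp(-2\,\fdist(z,w)^{\varepsilon})$, so
\begin{equation*}
\tld\mu(z) \,\lesssim\, \int_{\C}\frac{\d\mu(w)}{\rho(w)^2\exp\!\pth{2\,\fdist(z,w)^{\varepsilon}}}.
\end{equation*}
I split $\C=D^r(z)\cup D^r(z)^c$: the diagonal piece contributes $\lesssim\widehat\mu_r(z)$, and for the exterior I use a bounded-overlap cover of $\C$ by disks $\{D^r(z_j)\}_j$, bounding $\mu(D^r(z_j))/\rho(z_j)^2\lesssim\|\widehat\mu_r\|_\infty$ on each cell and exploiting the decay \eqref{dist:estimate2} (transferring $\fdist$ between $z$ and $z_j$ via Lemmas~\ref{lemma:rhoinballs} and \ref{lemma:rhooutsideballs}) to make $\sum_j\exp(-c\,\fdist(z,z_j)^{\varepsilon})$ bounded uniformly in $z$. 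The main obstacle I anticipate is precisely this last step: producing a $\rho$-adapted lattice with controlled overlap and verifying the $z$-uniform convergence of the exponential series. Once this is in place, the rest of the argument is a direct assembly of the estimates gathered in Section~2.
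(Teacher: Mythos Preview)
Your argument is correct, and for $(1)\Leftrightarrow(2)$ and $(3)\Rightarrow(2)$ it coincides with the paper's proof (which packages your Fubini/subharmonicity computation as \lemmaref{lemma:mumean} and your kernel lower bound as \lemmaref{lemma:carleson1}).

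The one place where you diverge is $(2)\Rightarrow(3)$, and here you are working much harder than necessary. Your lattice-plus-exponential-decay scheme can be made to work (the $(r,\phi)$-lattices exist, and the uniform summability of $\sum_j\exp(-c\,\fdist(z,z_j)^{\varepsilon})$ follows from the same circle of estimates as \lemmaref{lemma:integral:estimate}), but it is entirely avoidable. Notice that in your proof of $(2)\Rightarrow(1)$ you have already established, for any $f\in\H(\C)$,
\[
\int_{\C}\abs{f\,\ee^{-\phi}}^p\d\mu \,\lesssim\, \int_{\C}\abs{f\,\ee^{-\phi}}^p\,\widehat{\mu}_{r'}\,\d A.
\]
Now simply take $p=2$ and $f=K_{2,z}$: the left side is $\tld\mu(z)$, and the right side is bounded by $\|\widehat\mu_{r'}\|_\infty\,\gfhnorm{K_{2,z}}^2=\|\widehat\mu_{r'}\|_\infty$. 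This is exactly how the paper proceeds: \lemmaref{lemma:mumean}(a) applied to $K_{2,z}$ gives $\tld\mu(z)\lesssim\|\widehat\mu_r\|_\infty$ in one line, with no lattice, no off-diagonal splitting, and no exponential series. The ``main obstacle'' you anticipate is therefore not an obstacle at all --- you have already bypassed it.
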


An important consequence of this result is that the Fock-Carleson measures for $\gFock$ are independent of $p$, so we will simply call them \emph{$\phi$-Fock-Carleson measures}. In order to prove \thmref{thm:carleson1} we need the following lemmas.

\begin{lemma}\label{lemma:carleson1}
Let $0<p<\infty$ and assume $r_0>0$ satisfies \eqref{eq:kernel2}. 
Then for every $0< r\le r_0$ we have
$$
\widehat{\mu}_r (z) \simeq \int_{D^r(z)} \abs{K_{p,z}(w) \ef{w}}^p \d \mu(w) \qquad (\mu\in\M,\,z\in \C),
$$
and, in particular, 
$$
\widehat{\mu}_r (z) \lesssim \tld{\mu}(z)\qquad(\mu\in\M,\,z\in\C).
$$
\end{lemma}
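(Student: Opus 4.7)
The plan is to reduce both estimates to purely pointwise information coming from \eqref{eq:kernel2}, \lemmaref{lemma:rhoinballs} and \propref{prop:normkz}, since for $0<r\le r_0$ every $w\in D^r(z)$ lies in $D^{r_0}(z)$, where all the kernel asymptotics are available.

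First I would fix $z\in\C$ and $w\in D^r(z)\subseteq D^{r_0}(z)$. Two ingredients control the integrand on this disk: by \eqref{eq:kernel2},
$$\abs{K_z(w)}\simeq\frac{\ee^{\phi(z)+\phi(w)}}{\rho(z)\rho(w)},$$
and by \lemmaref{lemma:rhoinballs} (with $c_{r_0}$ depending only on $r_0$ and the doubling constant), $\rho(w)\simeq\rho(z)$. Combining these, $\abs{K_z(w)\ef{w}}^p\simeq \ee^{p\phi(z)}/\rho(z)^{2p}$ uniformly in $w\in D^r(z)$. Next I would divide by $\gfnorm{K_z}^p\simeq \ee^{p\phi(z)}\rho(z)^{2-2p}$, which is \propref{prop:normkz}, to obtain
$$\abs{K_{p,z}(w)\ef{w}}^p\simeq\frac{1}{\rho(z)^2}\qquad(w\in D^r(z)).$$

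Integrating this equivalence against $\mu$ over $D^r(z)$ yields
$$\int_{D^r(z)}\abs{K_{p,z}(w)\ef{w}}^p\d\mu(w)\simeq\frac{\mu(D^r(z))}{\rho(z)^2}\simeq\widehat{\mu}_r(z),$$
which is the first claim. The ``in particular'' statement follows by applying this equivalence with $p=2$ and observing that, since the integrand is nonnegative,
$$\widehat{\mu}_r(z)\simeq\int_{D^r(z)}\abs{K_{2,z}(w)}^2\efh{w}\d\mu(w)\le\int_\C\abs{K_{2,z}(w)}^2\efh{w}\d\mu(w)=\tld{\mu}(z).$$

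The only delicate point is that the lemma is stated for all $0<p<\infty$, while \propref{prop:normkz} is formulated for $1\le p\le\infty$; however, the lower bound $\gfnorm{K_z}\gtrsim\ee^{\phi(z)}\rho(z)^{2/p-2}$ extends verbatim to $0<p<1$ by restricting the integral defining $\gfnorm{K_z}^p$ to $D^{r_0}(z)$ and using \eqref{eq:kernel2} together with $\rho(w)\simeq\rho(z)$, and the matching upper bound for $0<p<1$ is obtained by writing $\abs{K_z}^p=\abs{K_z}\cdot\abs{K_z}^{p-1}$, estimating the second factor pointwise via \eqref{eq:est} and \propref{prop:normkz} for $p=\infty$, and then applying \lemmaref{lemma:kernel}(b). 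Once the norm asymptotics are in hand for the full range of $p$, the computation above is mechanical, and I expect no further obstacle.
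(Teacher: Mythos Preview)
Your core argument---using \eqref{eq:kernel2}, \lemmaref{lemma:rhoinballs} and \propref{prop:normkz} to obtain $|K_{p,z}(w)\ef{w}|^p\simeq\rho(z)^{-2}$ on $D^{r_0}(z)$, then integrating over $D^r(z)$ and specializing to $p=2$---is exactly the paper's proof.

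One remark on your ``delicate point''. In the paper $K_{p,z}$ is defined only for $1\le p<\infty$, and the lemma is only ever applied in that range, so the hypothesis ``$0<p<\infty$'' is most likely a slip; under that reading there is no gap. If you nonetheless want to push the norm asymptotics to $0<p<1$, your lower bound for $\gfnorm{K_z}$ is correct, but the upper-bound argument does not work as written: for $0<p<1$ the exponent in $|K_z|^{p-1}$ is \emph{negative}, so a pointwise \emph{upper} bound on $|K_z(w)\ef{w}|$ (which is all that \eqref{eq:est} together with the $p=\infty$ case of \propref{prop:normkz} provides) yields only a \emph{lower} bound on $|K_z(w)\ef{w}|^{p-1}$, and hence no control on the integrand. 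A route that does work is to raise the off-diagonal estimate \eqref{eq:kernel1} to the $p$-th power and integrate, handling the resulting integral via \lemmaref{lemma:rhooutsideballs} and \lemmaref{lemma:integral:estimate}, much as is done elsewhere in the paper.
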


\begin{proof}
	Note that \eqref{eq:rhoinballs}, \eqref{eq:kernel2} and \propref{prop:normkz} imply that
	$$\frac{1}{\rho(z)^2} \simeq \abs{K_{p,z}(w) \ef{w}}^p \qquad (z\in \C, w\in D^{r_0}(z)).$$
	Therefore for every $0<r \le r_0$ we have that
	$$\widehat{\mu}_r(z) \simeq \frac{\mu(D^r(z))}{\rho(z)^2} \simeq \int_{D^r(z)} \abs{K_{p,z}(w) \ef{w}}^p \d \mu(w) \qquad (z\in \C).$$
	The last assertion of the statement follows by taking $p=2$.
\end{proof}

\begin{lemma} \label{lemma:ball}
	Let $r>0$. Then there is $R>0$ such that $D^r(z) \subset \C\setminus D(0,\frac{\abs{z}}{2})$, for every $z\in\C$ with $\abs{z}\geq R$.
\end{lemma}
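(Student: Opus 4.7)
The plan is to reduce the statement to a growth bound on $\rho$. Since $D^r(z)=D(z,r\rho(z))$, the triangle inequality gives $|w|\ge |z|-r\rho(z)$ for every $w\in D^r(z)$, so the inclusion $D^r(z)\subset \C\setminus D(0,|z|/2)$ holds as soon as $r\rho(z)<|z|/2$. Thus the lemma follows once I show that $\rho(z)/|z|\to 0$ as $|z|\to\infty$.

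To establish this decay I would invoke the second inequality in \lemmaref{lemma:rhooutsideballs} with the pair $(z,w)$ appearing there taken to be $(0,z)$ here. Provided $z\in D(0)^c$, i.e.\ $|z|\ge\rho(0)$, that estimate reads
$$
\rho(z)\lesssim |z|^{1-\delta}\rho(0)^{\delta},
$$
where $\delta\in(0,1)$ is the constant from that lemma. Dividing by $|z|$ yields $\rho(z)/|z|\lesssim \rho(0)^{\delta}\,|z|^{-\delta}\to 0$ as $|z|\to\infty$. Hence there exists $R\ge\rho(0)$ such that $r\rho(z)<|z|/2$ for every $z$ with $|z|\ge R$, which is exactly what we need.

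I do not anticipate any real obstacle here; the only bit of care is in choosing the right direction of \lemmaref{lemma:rhooutsideballs}, namely the version that bounds $\rho$ at the far-off point by a power of the distance times a power of $\rho$ at the fixed reference point $0$ (rather than the symmetric statement, which would require controlling $\rho(0)$ in terms of $\rho(z)$). Enlarging $R$ if necessary also takes care of the hypothesis $z\in D(0)^c$ automatically, so no case distinction is needed.
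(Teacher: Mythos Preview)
Your proof is correct and follows essentially the same approach as the paper's: reduce the inclusion to the inequality $r\rho(z)<|z|/2$ via the triangle inequality, then invoke \lemmaref{lemma:rhooutsideballs} (with base point $0$) to get $\rho(z)\lesssim|z|^{1-\delta}$ for $|z|\ge\rho(0)$, and conclude by taking $R$ large. The only difference is that you are a bit more explicit about which of the two inequalities in \lemmaref{lemma:rhooutsideballs} is used and about the auxiliary hypothesis $z\in D(0)^c$.
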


\begin{proof}
	If $w\in D^r(z)$ then $\abs{w} \geq \abs{z} - \abs{z-w} > \abs{z} - r \rho(z)$, and recall that, by \lemmaref{lemma:rhooutsideballs}, there is a constant $\delta\in (0,1)$ satisfying 
	$$\rho(z) \lesssim \abs{z}^{1-\delta} \qquad (z\in D^r(0)^c).$$
	Thus there is $R>0$ big enough such that $\abs{z} - r \rho(z) \geq \abs{z}/2$, if $\abs{z} \geq R$. Therefore it is clear that $R$ satisfies the statement of the lemma. 
\end{proof}

\begin{lemma}\label{lemma:mumean}
Let $0<p<\infty$. Then:
\begin{enumerate}[(a)]
\item For every $r > 0$ there is a constant $C>0$ such that \label{lemma:mumeana} 
$$
\int_\C \abs{f \ee^{-\phi}}^p \d \mu \leq C \int_\C \abs{f\ee^{-\phi}}^p \widehat{\mu}_r\d A, 
\,\,\mbox{ for any $\mu\in\M$ and $f\in\H(\C)$.}
$$
\item There is $R_0>0$ such that for every $r > 0$ there is a constant $C>0$ satisfying \label{lemma:mumeanb} 
$$
\int_{D(0,R)^c} \abs{f \ee^{-\phi}}^p \d \mu \leq C \int_{D(0,R/2)^c} \abs{f\ee^{-\phi}}^p \widehat{\mu}_r\d A,
$$
for any $\mu\in\M$, $R\ge R_0$ and $f\in \H(\C)$.
\end{enumerate}
\end{lemma}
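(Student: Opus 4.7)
Both parts follow the familiar Carleson-type argument that combines the submean-value inequality from \lemmaref{lemma:estimates} with Tonelli's theorem, the Lipschitz estimate \eqref{rho:Lipschitz}, and the doubling of $\mu = \Delta\phi$.

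For (a), given $r>0$, I would set $r_1 := r/(1+r) \in (0,1)$, so that $r_1/(1-r_1) = r$. \lemmaref{lemma:estimates} yields a constant $C>0$ (depending on $r$ and $p$) with
\[
\abs{f(w)\ef{w}}^p \le C \int_{D^{r_1}(w)} \abs{f(\zeta)\ef{\zeta}}^p \frac{\d A(\zeta)}{\rho(\zeta)^2}, \qquad w\in\C,\ f\in\H(\C).
\]
Integrating against $\d\mu(w)$ and swapping the order of integration by Tonelli's theorem gives
\[
\int_\C \abs{f\ee^{-\phi}}^p \d\mu \le C \int_\C \abs{f(\zeta)\ef{\zeta}}^p \frac{\mu(E(\zeta))}{\rho(\zeta)^2} \d A(\zeta),
\]
where $E(\zeta) := \{\,w\in\C : \zeta \in D^{r_1}(w)\,\} = \{\,w : \abs{w-\zeta} < r_1\rho(w)\,\}$. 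The geometric heart of the argument is the inclusion $E(\zeta) \subseteq D^r(\zeta)$: for $w \in E(\zeta)$, \eqref{rho:Lipschitz} gives $\rho(w) \le \rho(\zeta) + \abs{w-\zeta} < \rho(\zeta) + r_1\rho(w)$, so $\rho(w) < \rho(\zeta)/(1-r_1)$ and therefore $\abs{w-\zeta} < r_1 \rho(\zeta)/(1-r_1) = r\rho(\zeta)$. Since $\mu(D^r(\zeta)) \simeq \rho(\zeta)^2 \widehat{\mu}_r(\zeta)$, part (a) follows.

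For (b), the only subtlety is that $R_0$ must be independent of $r$. I would repeat the argument of (a) after fixing the \emph{universal} radius $r_1 := 1/2$ throughout, so that both $r_1$ and the constant produced by \lemmaref{lemma:estimates} become universal. Then \lemmaref{lemma:ball} applied to $r_1 = 1/2$ delivers a universal $R_0 > 0$ such that $D^{1/2}(w) \subseteq D(0, \abs{w}/2)^c$ whenever $\abs{w} \ge R_0$. For $R \ge R_0$ and $w \in D(0,R)^c$, this inclusion forces $D^{1/2}(w) \subseteq D(0, R/2)^c$, so the Tonelli interchange confines the $\zeta$-integration to $D(0, R/2)^c$. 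The geometric step of (a) with $r_1 = 1/2$ now gives $E(\zeta) \subseteq D^1(\zeta)$, and one invocation of the doubling of $\Delta\phi$ (via \lemmaref{estimate:doubling}) yields, for each given $r>0$, a constant $C = C(r)$ with $\mu(D^1(\zeta)) \le C\,\mu(D^r(\zeta))$. The only genuine obstacle in either part is the asymmetry of the disks $D^{r_1}(\cdot)$ after the Fubini swap, and it is cleanly absorbed by the Lipschitz property \eqref{rho:Lipschitz} of $\rho$; the extra care in (b) is merely to freeze $r_1$ independently of $r$.
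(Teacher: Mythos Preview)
Your argument for (a) is correct and is essentially the paper's proof: both apply \lemmaref{lemma:estimates}, swap the integrals by Tonelli, and then absorb the asymmetry of the disks after the swap (you via the Lipschitz estimate~\eqref{rho:Lipschitz}, the paper via \eqref{eq:rhoinballs}, which amounts to the same thing).

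Your argument for (b), however, has a genuine error in the last step. The measure $\mu$ is an \emph{arbitrary} element of $\M$; it is $\Delta\phi$, not $\mu$, that is doubling. So ``one invocation of the doubling of $\Delta\phi$'' cannot possibly yield $\mu(D^1(\zeta))\le C\,\mu(D^r(\zeta))$: for $r<1$ take, e.g., $\mu$ a Dirac mass at a point of $D^1(\zeta)\setminus D^r(\zeta)$ and the inequality fails outright. Since the constant $C$ in (b) is allowed to depend on $r$ but the inequality must hold for \emph{every} $\mu\in\M$, this step collapses precisely when $r<1$.

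The repair is immediate and in fact removes the need for any doubling comparison at all. Do not freeze $r_1=\tfrac12$; reuse the choice $r_1=r/(1+r)$ from your part (a), so that the geometric inclusion already gives $E(\zeta)\subset D^r(\zeta)$ and hence $\mu(E(\zeta))\le\mu(D^r(\zeta))$ for free. The uniformity of $R_0$ you were worried about then comes automatically: since $r_1=r/(1+r)<1$ for every $r>0$, one has $D^{r_1}(w)\subset D^1(w)$, so the single $R_0$ supplied by \lemmaref{lemma:ball} with the fixed radius $1$ works simultaneously for all $r$. This is exactly how the paper proceeds: it picks the submean radius $s=r/c$ (with $c$ the constant from \eqref{eq:rhoinballs}) so that after Tonelli one lands directly on $\mu(D^r(\cdot))$, never comparing $\mu$-masses of disks of different radii.
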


\begin{proof}
By \lemmaref{lemma:estimates}, for every $s>0$ there is a constant $C_s>0$ such that
$$
\int_\Omega \abs{f(z) \ef{z}}^p \d \mu (z) \leq C_s \int_\Omega \int_{D^s(z)} \abs{f(w) \ef{w}}^p \frac{\d A(w)}{\rho(w)^2} \d\mu (z),
$$
	for any Borel set $\Omega \subset \C$ and for every $\mu\in\M$ and  $f\in\H(\C)$. Let $r>0$. Recall that, by \eqref{eq:rhoinballs}, there is a constant $c\geq 1$ such that $\rho(z) \leq c \rho(w)$, for every $z\in\C$ and $w\in D^r(z)$. Thus $z\in D^{sc}(w)$, whenever $0< s \leq r$ and $w\in D^s(z)$. Consequently, by Tonelli's theorem, we obtain that
	\begin{align*}
		\int_\Omega \abs{f(z) \ef{z}}^p \d \mu (z) & \leq C_s \int_{\Omega_r} \abs{f(w) \ef{w}}^p\frac{\mu(D^{sc}(w))}{\rho(w)^2} \d A (w),
	\end{align*}
	where $\Omega_r := \cup_{z\in\Omega} D^r(z)$, for any $s\in(0,r]$. By taking $s=s(r):=r/c \in (0,r]$ we have that
	\begin{equation}\label{eq:mumean}
		\int_\Omega \abs{f(z) \ef{z}}^p \d \mu (z) \leq C_{s(r)} \int_{\Omega_r} \abs{f(w) \ef{w}}^p\frac{\mu(D^{r}(w))}{\rho(w)^2} \d A (w).
	\end{equation}
	Since $\C_r = \C$, (\ref{lemma:mumeana}) directly follows from \eqref{eq:mumean}. On the other hand, by \lemmaref{lemma:ball}, there is $R_0>0$ such that $(D(0,R)^c)_r \subset D(0,R/2)^c$, for every $R\geq R_0$. Therefore (\ref{lemma:mumeanb}) also follows from \eqref{eq:mumean}.
\end{proof}

\begin{proof}[Proof of \thmref{thm:carleson1}]
	\imply{(\ref{carl11})}{(\ref{carl12})} By \lemmaref{lemma:carleson1} and applying \eqref{eq:carleson1} to $f=K_{p,z}$ we obtain that
		$$\widehat{\mu}_r(z) \simeq \int_{D^r(z)} \abs{K_{p,z}(w) \ef{w}}^p\d \mu(w) \leq C \gfnorm{K_{p,z}}^p = C \qquad (z\in\C).$$
	\imply{(\ref{carl12})}{(\ref{carl13})} By \lemmaref{lemma:mumean}(a) we have that
	$$
		\tld{\mu} (z)  \lesssim \int_\C \abs{K_{2,z} \ee^{-\phi}}^2 \widehat{\mu}_r \d A \leq \infnorm{ \widehat{\mu}_r} \quad (z\in\C).
	$$
	 \imply{(\ref{carl13})}{(\ref{carl11})} By Lemmas \ref{lemma:mumean}(a) and \ref{lemma:carleson1} we get that
	\begin{align*}
		\int_\C \abs{f \ee^{-\phi}}^p \d \mu & \lesssim \int_\C \abs{f \ee^{-\phi}}^p \widehat{\mu}_r \d A\lesssim \infnorm{ \tld{\mu}} \gfnorm{f}^p \quad (f\in \gFock).
	\end{align*}
	Moreover, we have proved the estimates
	$\|i_{p,\mu}\|^p\lesssim\|\tld{\mu}\|_{\infty}
	     \lesssim\|\widehat{\mu}_r\|_{\infty}\lesssim\|i_{p,\mu}\|^p$. 
\end{proof}

\subsection{Vanishing Fock-Carleson measures for $\gFock$}
Let $\mu\in\M$ and $1\leq p < \infty$. We say that $\mu$ is a \emph{vanishing Fock-Carleson measure for $\gFock$} when the inclusion operator $i_{p,\mu} \colon \gFock \hookrightarrow L^p(\C, \ee^{-p\phi}\d \mu)$ is compact.
Those measures are characterized by the fact that their Berezin and average transforms vanish at infinity, as the following result shows.
	
\begin{theorem}\label{thm:carleson2}
	Let $\mu\in\M$ and $1 \leq p < \infty$. Then the following statements are equivalent:
	\begin{enumerate}
		\item $\mu$ is a vanishing Fock-Carleson measure for $\gFock$. \label{carl21}
		\item There is $r>0$ such that $\widehat{\mu}_r(z) \rightarrow 0$, as $\abs{z}\to\infty$. \label{carl22}
		\item $\tld{\mu}(z) \rightarrow 0$, as $\abs{z}\to\infty$.\label{carl23}
	\end{enumerate}
\end{theorem}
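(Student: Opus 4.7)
The plan is to establish the cyclic chain of implications (\ref{carl21})$\Rightarrow$(\ref{carl22})$\Rightarrow$(\ref{carl23})$\Rightarrow$(\ref{carl21}), mirroring the architecture of \thmref{thm:carleson1} and upgrading the boundedness estimates there to vanishing-at-infinity statements. The two workhorses are \lemmaref{lemma:carleson1}, which already provides the pointwise comparison $\widehat{\mu}_r(z)\lesssim\tld{\mu}(z)$ for $r\le r_0$ (so that (\ref{carl23})$\Rightarrow$(\ref{carl22}) is automatic), and \lemmaref{lemma:mumean}(b), which controls tail mass on $D(0,R)^c$ in terms of $\widehat{\mu}_r$ on $D(0,R/2)^c$. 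Thus only (\ref{carl21})$\Rightarrow$(\ref{carl22}), (\ref{carl22})$\Rightarrow$(\ref{carl23}) and (\ref{carl23})$\Rightarrow$(\ref{carl21}) require real work.

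For (\ref{carl21})$\Rightarrow$(\ref{carl22}) I would proceed by contradiction: assume there exist $c>0$ and $\{z_n\}$ with $|z_n|\to\infty$ and $\widehat{\mu}_r(z_n)\ge c$ for some $r\le r_0$, and test compactness of $i_{p,\mu}$ on the normalized kernels $f_n:=K_{p,z_n}$, which have unit $\gFock$-norm. Combining \eqref{eq:kernel1} with \propref{prop:normkz} yields
$$
|f_n(w)|\,\ef{w}\lesssim \frac{\rho(z_n)^{1-2/p}}{\rho(w)\exp(\fdist(z_n,w)^{\e})},
$$
so that $f_n(w)\to 0$ for every $w\in\C$ (see the obstacle paragraph below). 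Compactness of $i_{p,\mu}$ then forces a subsequence of $\{i_{p,\mu}f_n\}$ to converge in $L^p(\C,\ee^{-p\phi}\d\mu)$, and the pointwise (hence $\mu$-a.e., after a further subsequence) limit identifies this limit as $0$; but \lemmaref{lemma:carleson1} gives $\|f_n\|_{L^p(\ee^{-p\phi}\d\mu)}^p\gtrsim\widehat{\mu}_r(z_n)\ge c$, a contradiction. For (\ref{carl22})$\Rightarrow$(\ref{carl23}), given $\eta>0$ I pick $R\ge R_0$ so that $\widehat{\mu}_r(w)<\eta$ on $D(0,R/2)^c$ and split $\tld{\mu}(z)=\int_{D(0,R)^c}+\int_{D(0,R)}$; \lemmaref{lemma:mumean}(b) applied to $f=K_{2,z}$ with $\|K_{2,z}\|_{2,\phi}=1$ bounds the tail by $\lesssim\eta$, while the inner piece vanishes as $|z|\to\infty$ by the same pointwise kernel estimate (uniformly on $\cl{D(0,R)}$) together with $\mu(D(0,R))<\infty$. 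Finally, for (\ref{carl23})$\Rightarrow$(\ref{carl21}) I take a bounded sequence $\{f_n\}\subset\gFock$; by \eqref{eq:est} it is locally uniformly bounded, so Montel's theorem extracts a subsequence converging locally uniformly to some $f\in\H(\C)$, and Fatou's lemma places $f\in\gFock$. Replacing $f_n$ by $f_n-f$ reduces to $f\equiv 0$, and the same near/far split via \lemmaref{lemma:mumean}(b) (using that (\ref{carl23}) implies (\ref{carl22})) gives $\int_{D(0,R)^c}|f_n\ee^{-\phi}|^p\d\mu\lesssim\eta$ uniformly in $n$ for $R$ large, while $\int_{D(0,R)}|f_n\ee^{-\phi}|^p\d\mu\to 0$ by uniform convergence of $f_n$ on $D(0,R)$ and local finiteness of $\mu$. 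Hence $\|f_n\|_{L^p(\ee^{-p\phi}\d\mu)}\to 0$ and $i_{p,\mu}$ is compact.

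The main technical point is the pointwise decay $f_n(w)\ef{w}\to 0$ used in (\ref{carl21})$\Rightarrow$(\ref{carl22}) and (\ref{carl22})$\Rightarrow$(\ref{carl23}), which requires taming the factor $\rho(z_n)^{1-2/p}$ when $p<2$ and $\rho(z_n)$ may itself decay. This is handled by observing that $\exp(\fdist(z_n,w)^{\e})$ grows at least like $\exp(c_w|z_n|^{\delta\e})$ by \eqref{dist:estimate2}, whereas \eqref{rho:Lipschitz} together with \lemmaref{lemma:rhooutsideballs} (applied in its reverse-direction form) cap both the growth and the decay of $\rho(z)$ polynomially in $|z|$, so the super-polynomial exponential annihilates any such polynomial factor. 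Once this pointwise bound is in hand, the remainder of the argument is the by-now-familiar near-field/far-field decomposition, so the obstacle really is concentrated in this kernel-decay step.
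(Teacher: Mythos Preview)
Your argument is correct, and it follows a genuinely different route from the paper's. The paper packages the compactness criterion into an abstract Kolmogorov--Riesz type lemma (\lemmaref{lemma:compact} and its \corolref{corol:compact}): relative compactness of the unit ball in $L^p(\C,\ee^{-p\phi}\d\mu)$ is equivalent to uniform smallness of tails. Both \eqref{carl21}$\Rightarrow$\eqref{carl22} and \eqref{carl23}$\Rightarrow$\eqref{carl21} are then deduced from this lemma, together with \lemmaref{lemma:mumean}(b), \lemmaref{lemma:carleson1} and \lemmaref{lemma:ball}; in particular the paper never has to analyze the pointwise decay of $K_{p,z}$ and so never meets your $\rho(z_n)^{1-2/p}$ obstacle. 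You instead argue sequentially: in \eqref{carl21}$\Rightarrow$\eqref{carl22} you test directly on normalized kernels and exploit the pointwise decay coming from \eqref{eq:kernel1}, and in \eqref{carl23}$\Rightarrow$\eqref{carl21} you extract a subsequence via Montel and do the near/far split by hand. Your handling of the polynomial control on $\rho$ versus the super-polynomial growth of $\exp(\fdist(\cdot,\cdot)^{\e})$ is sound. The payoff of your approach is that it is self-contained and avoids stating an auxiliary compactness lemma; the payoff of the paper's approach is that \lemmaref{lemma:compact} and \corolref{corol:compact} are reusable (the paper invokes \corolref{corol:compact} again in the proof of \thmref{thm:Tzcompact}), and the argument for \eqref{carl21}$\Rightarrow$\eqref{carl22} becomes a one-liner that sidesteps any delicate kernel estimates.
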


An important consequence of this result is that the vanishing Fock-Carleson measures for $\gFock$ are independent of $p$, so we will simply call them \emph{vanishing $\phi$-Fock-Carleson measures}. The key tool in the proof of \thmref{thm:carleson2} is the following Kolmogorov-Riesz type compactness lemma.

\begin{lemma} \label{lemma:compact}
	Let $\nu\in\M$ and $1 \leq p <\infty$.
	\begin{enumerate}[(a)]
		\item If $\F$ is relatively compact in $L^p(\C, \dist\nu)$ then $\F$ is bounded in $L^p(\C, \dist\nu)$ and satisfies 
		\begin{equation} \label{eq:cues}
			\lim_{R\to\infty} \sup_{f\in\F} \norm{f \chi_{D(0,R)^c}}_{L^p(\C, \dist\nu)} = 0.
		\end{equation}
		\item If $\F$ is a locally bounded family of entire functions satisfying \eqref{eq:cues} then $\F$ is relatively compact in $L^p(\C, \dist\nu)$. 
	\end{enumerate}
\end{lemma}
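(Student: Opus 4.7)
The statement is a Kolmogorov--Riesz type compactness criterion adapted to $L^p(\C,\d\nu)$: the tail condition~\eqref{eq:cues} plays the role of uniform integrability at infinity, while holomorphy + local boundedness replaces the equicontinuity assumption (via Montel's theorem). Part~(a) is soft and general; part~(b) is where the analytic structure matters.

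\textbf{Proof of (a).} Relative compactness in a metric space implies boundedness and total boundedness. Given $\varepsilon>0$, pick an $(\varepsilon/2)$-net $f_1,\dots,f_N\in L^p(\C,\d\nu)$ covering $\F$. For each fixed $i$, the dominated convergence theorem applied to the decreasing sequence $|f_i|^p\chi_{D(0,R)^c}\to 0$ gives $\|f_i\chi_{D(0,R)^c}\|_{L^p(\d\nu)}\to 0$ as $R\to\infty$. Choose $R$ large enough that this bound is $<\varepsilon/2$ simultaneously for $i=1,\dots,N$. For any $f\in\F$, choosing $i$ with $\|f-f_i\|_{L^p(\d\nu)}<\varepsilon/2$ and using the triangle inequality yields $\|f\chi_{D(0,R)^c}\|_{L^p(\d\nu)}<\varepsilon$, which is~\eqref{eq:cues}.

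\textbf{Proof of (b).} It suffices to show that any sequence $\{f_n\}\subset\F$ has a subsequence convergent in $L^p(\C,\d\nu)$. Since $\F$ is a locally bounded family of entire functions, Montel's theorem provides a subsequence $\{f_{n_k}\}$ converging uniformly on compact subsets of $\C$ to some $f\in\H(\C)$. Fix $\varepsilon>0$ and, by the hypothesis~\eqref{eq:cues}, choose $R>0$ such that $\|g\chi_{D(0,R)^c}\|_{L^p(\d\nu)}<\varepsilon$ for every $g\in\F$. Splitting
$$
\|f_{n_k}-f\|_{L^p(\d\nu)}\le\bigl\|(f_{n_k}-f)\chi_{D(0,R)}\bigr\|_{L^p(\d\nu)}+\|f_{n_k}\chi_{D(0,R)^c}\|_{L^p(\d\nu)}+\|f\chi_{D(0,R)^c}\|_{L^p(\d\nu)},
$$
the first term tends to $0$ by uniform convergence on $\overline{D(0,R)}$ together with $\nu(D(0,R))<\infty$ (local finiteness of $\nu$), the second is $<\varepsilon$ by the choice of $R$, and the third is $\le\varepsilon$ by Fatou's lemma applied to $|f_{n_k}|^p\chi_{D(0,R)^c}\to|f|^p\chi_{D(0,R)^c}$ pointwise. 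Hence $\limsup_k\|f_{n_k}-f\|_{L^p(\d\nu)}\le 2\varepsilon$, and $\varepsilon>0$ was arbitrary.

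\textbf{Main obstacle.} The only delicate point is that the limit $f$ produced by Montel is not a priori in $L^p(\C,\d\nu)$, so the third term above cannot be handled by a direct tail estimate; the use of Fatou's lemma (simultaneously yielding $f\in L^p$ and the desired bound on its tail) is what closes the argument.
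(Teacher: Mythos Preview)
Your proof is correct; both parts stand as written. The approaches, however, differ in their technical implementation from the paper's.

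For (a), the paper argues by contradiction via sequences: it sets $L$ equal to the limsup, picks $(f_n)\subset\F$ and $R_n\to\infty$ realizing $L$, extracts an $L^p$-convergent subsequence, and uses the triangle inequality to force $L=0$. Your $\varepsilon/2$-net argument is the more standard total-boundedness route and is slightly more direct.

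For (b), the difference is more substantive. The paper proves total boundedness directly: given $\varepsilon>0$, it fixes $R$ so that all tails are $<\varepsilon/4$, then uses Montel to extract a finite $\varepsilon/2$-net \emph{chosen from $\F$ itself}; because the net elements lie in $\F$, their tails are automatically small and the triangle inequality closes the argument without ever producing a limit function. You instead prove sequential compactness: Montel gives a locally uniform limit $f\in\H(\C)$, and you handle its tail via Fatou's lemma. Your approach is arguably cleaner and yields the extra information that $f\in L^p(\C,\d\nu)$, but it requires the Fatou step that you correctly flag as the only nontrivial point. The paper's approach sidesteps this issue entirely at the cost of a slightly more elaborate net construction.
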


\begin{proof}
	(a) Assume that $\F$ is relatively compact in $L^p(\C, \dist\nu)$. Then it is clear that $\F$ is bounded in $L^p(\C, \dist\nu)$, so we are going to show that it satisfies \eqref{eq:cues}. 
	
	Let
	$$L:= \varlimsup_{R\to\infty} \sup_{f\in\F} \norm{f \chi_{D(0,R)^c}}_{L^p(\C, \dist\nu)},$$
	and note that \eqref{eq:cues} is equivalent to $L=0$. In order to prove that, pick a sequence of functions $(f_n)_n$ in $\F$ and a sequence of positive numbers $(R_n)_n$ such that $R_n \to\infty$ and $\norm{f_n \chi_{D(0,R_n)^c}}_{L^p(\C, \dist\nu)} \longrightarrow L$, as $n\to\infty$. Since $\F$ is relatively compact in $L^p(\C, \dist\nu)$, there is a subsequence $(f_{n_k})_k$ of $(f_n)_n$ which converges in $L^p(\C, \dist\nu)$ to a function $f\in L^p(\C, \dist\nu)$. Then 
	\begin{align*}
		\norm{f_{n_k} \chi_{D(0,R_{n_k})^c}}_{L^p(\C, \dist\nu)} & \leq \norm{(f_{n_k} - f)\chi_{D(0,R_{n_k})^c}}_{L^p(\C, \dist\nu)} + \norm{f\chi_{D(0,R_{n_k})^c} }_{L^p(\C, \dist\nu)}\\
		& \leq \norm{f_{n_k} - f}_{L^p(\C, \dist\nu)} + \norm{f\chi_{D(0,R_{n_k})^c} }_{L^p(\C, \dist\nu)},
	\end{align*}
	and letting $k\to \infty$ we get that $L=0$.
	
	(b) Let $\F$ be a locally bounded family of entire functions which satisfies \eqref{eq:cues}. Since $\nu$ is locally finite, for every $R>0$, we have that
	\begin{equation}\label{eq:locfinite}
		\norm{f \chi_{D(0,R)}}_{L^p(\C, \dist\nu)} \lesssim \sup_{\abs{z} \leq R} \abs{f(z)} \qquad (f\in \Cont(\cl{D(0,R)}),
	\end{equation}
	where as usual $\Cont(\cl{D(0,R)})$ is the space of continuous functions on the closed disk $\cl{D(0,R)}$. Then it is clear that \eqref{eq:cues} and \eqref{eq:locfinite} show that $\F\subset  L^p(\C, \dist\nu)$. Now we want to prove that $\F$ is relatively compact in $L^p(\C, \dist\nu)$, or equivalently that $\F$ is precompact (totally bounded) in $L^p(\C, \dist\nu)$, which means that for every $\e >0$ there is a finite covering of $\F$ by balls in $L^p(\C, \dist\nu)$ of radius $\e$. 
	
	Let $\e>0$. By \eqref{eq:cues} there is $R>0$ such that
	$$\sup_{f\in\F} \norm{f \chi_{D(0,R)^c}}_{L^p(\C, \dist\nu)} < \e/4.$$
	Since $\F$ is locally bounded,
	$$\sup\{ \abs{f(z)} \colon f\in\F, \abs{z} \leq 2R \} < \infty$$
	and so $\F$ is a normal family on the disk $D(0,2R)$, by Montel's theorem. In particular, $\F$ is relatively compact (and so precompact) in $\Cont(\cl{D(0,R)})$. Taking into account \eqref{eq:locfinite}, it follows that there are finitely many functions $f_1, \ldots, f_n$ in $\F$ such that for any $f\in \F$ there is $1\leq j \leq n$ so that $\norm{(f-f_j) \chi_{D(0,R)}}_{L^p(\C, \dist\nu)} < \e/2$ and therefore
	\begin{align*}
		\norm{f-f_j}_{L^p(\C, \dist\nu)} & \leq  \norm{(f-f_j) \chi_{D(0,R)}}_{L^p(\C, \dist\nu)} + \norm{(f-f_j) \chi_{D(0,R)^c}}_{L^p(\C, \dist\nu)}  \\
		& < \frac{\e}{2} +\norm{f \chi_{D(0,R)^c}}_{L^p(\C, \dist\nu)} + \norm{f_j \chi_{D(0,R)^c}}_{L^p(\C, \dist\nu)} < \e.
	\end{align*}
	Hence the proof is complete.
\end{proof}

\begin{corollary}\label{corol:compact}
	Let $1\leq p <\infty$ and $\F \subset \gFock$. Then $\F$ is relatively compact in $\gFock$ if and only if $\F$ is bounded in $\gFock$ and satisfies
	\begin{equation} \label{eq:compact}
		\lim_{R\to\infty} \sup_{f\in\F} \norm{f \chi_{D(0,R)^c}}_{p,\phi} = 0.
	\end{equation}
\end{corollary}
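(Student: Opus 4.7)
The plan is to deduce the corollary from \lemmaref{lemma:compact} by choosing the measure $\nu = \ee^{-p\phi}\d A$. Since $\phi\in\Cont^{\infty}(\C)$, the measure $\nu$ is locally finite, and one has the identity $L^{p}(\C,\d\nu) = \Lpf$ isometrically (as the defining integrals coincide). Under this identification the tail condition \eqref{eq:cues} becomes exactly \eqref{eq:compact}.

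For the ``only if'' direction, I would observe that $\gFock$ is (by definition) a subspace of $\Lpf$ equipped with the same norm, so the inclusion $\gFock \hookrightarrow L^{p}(\C,\d\nu)$ is an isometry. Hence if $\F$ is relatively compact in $\gFock$ it is also relatively compact in $L^{p}(\C,\d\nu)$, and \lemmaref{lemma:compact}(a) delivers the boundedness of $\F$ together with \eqref{eq:compact}.

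For the ``if'' direction, the key step is to verify that a norm-bounded family in $\gFock$ is a locally bounded family of entire functions, in order to apply \lemmaref{lemma:compact}(b). This follows immediately from the pointwise estimate \eqref{eq:est}: on any compact set $K\subset\C$ both $\rho^{-2/p}$ and $\ee^{\phi}$ are bounded (continuity of $\phi$ and positivity/continuity of $\rho$), so $\sup_{z\in K,\,f\in\F}|f(z)| \lesssim_K \sup_{f\in\F}\gfnorm{f} < \infty$. Thus \lemmaref{lemma:compact}(b) applies and gives relative compactness of $\F$ in $L^{p}(\C,\d\nu) = \Lpf$.

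The remaining subtlety is that we need relative compactness in $\gFock$, not merely in $\Lpf$. This will follow once we show that $\gFock$ is a closed subspace of $\Lpf$: if $f_n\in\gFock$ converges in $\Lpf$ to $f$, then applying \eqref{eq:est} to the differences $f_n-f_m$ shows that $\{f_n\}$ is uniformly Cauchy on compact subsets of $\C$, hence converges locally uniformly to some entire function $g$; since a subsequence of $\{f_n\}$ also converges pointwise a.e.\ to $f$, we get $f=g$ a.e., so $f\in\gFock$. Consequently the closure of $\F$ in $\Lpf$ lies inside $\gFock$, and relative compactness in $\Lpf$ coincides with relative compactness in $\gFock$. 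The main (and only mildly delicate) step is the verification of local boundedness via \eqref{eq:est}; everything else is bookkeeping around the two parts of \lemmaref{lemma:compact}.
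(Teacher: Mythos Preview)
Your proof is correct and follows essentially the same approach as the paper: apply \lemmaref{lemma:compact} with $\d\nu=\ee^{-p\phi}\d A$, using part~(a) for the ``only if'' direction and \eqref{eq:est} to verify local boundedness before invoking part~(b) for the ``if'' direction. Your argument is in fact slightly more detailed than the paper's, as you make explicit why relative compactness in $\Lpf$ yields relative compactness in $\gFock$ (closedness of $\gFock$ in $\Lpf$), a point the paper leaves implicit since $\gFock$ was already noted to be a Banach space.
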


\begin{proof}
	It follows by applying \lemmaref{lemma:compact} to $\dist \nu = \ee^{-p\phi} \dist A$. Namely, part (a) proves that if $\F$ is relatively compact in $\gFock$ then $\F$ is bounded in $\gFock$ and satisfies \eqref{eq:compact}. Recall that \eqref{eq:est} shows that if $\F$ is bounded in $\gFock$ then $\F$ is locally bounded, and so part (b) completes the proof of the corollary.
\end{proof}

\begin{lemma} \label{lemma:kzweak}
	 $\displaystyle \lim_{\abs{z} \to\infty} K_{2,z}(w) = 0$, for every $w\in\C$.
\end{lemma}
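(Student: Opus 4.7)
The plan is to combine the pointwise upper bound on the Bergman kernel \eqref{eq:kernel1} with the norm estimate $\gfhnorm{K_z}\simeq e^{\phi(z)}/\rho(z)$ from \propref{prop:normkz} to get, for every $z,w\in\C$,
\[
|K_{2,z}(w)| = \frac{|K_z(w)|}{\gfhnorm{K_z}} \lesssim \frac{e^{\phi(w)}}{\rho(w)}\cdot\frac{1}{\exp\!\bigl(\fdist(z,w)^\e\bigr)}.
\]
For fixed $w$, the prefactor $e^{\phi(w)}/\rho(w)$ is a constant, so the task reduces to showing that $\fdist(z,w)\to\infty$ as $|z|\to\infty$.

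Next I would fix $w\in\C$ and verify that $w\in D(z)^c$ for all sufficiently large $|z|$. Indeed, by \lemmaref{lemma:rhooutsideballs} (applied with the base point $0$), $\rho(z)\lesssim |z|^{1-\delta}\rho(0)^\delta$ for $z$ outside $D(0)$, so $\rho(z)=o(|z|)$; hence $|z-w|\ge|z|-|w|>\rho(z)$ for $|z|$ large. For such $z$ we may apply the second inequality of \eqref{dist:estimate2} with $r=1$, obtaining
\[
\fdist(z,w) \ge c_1^{-1}\left(\frac{|z-w|}{\rho(z)}\right)^{\!\delta}.
\]
Since $|z-w|\to\infty$ and $\rho(z)=o(|z|)$, the ratio $|z-w|/\rho(z)$ tends to $\infty$, so $\fdist(z,w)\to\infty$.

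Combining these, $\exp(\fdist(z,w)^\e)\to\infty$ as $|z|\to\infty$, and therefore $|K_{2,z}(w)|\to 0$, as desired. The only real point to check carefully is the growth control $\rho(z)=o(|z|)$, which is the ingredient that forces the argument of $\fdist$ to blow up; once that is in hand, the conclusion follows immediately from the exponential decay of the off‑diagonal kernel bound.
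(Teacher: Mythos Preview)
Your proof is correct and takes a genuinely different route from the paper's. The paper avoids the off-diagonal kernel estimate \eqref{eq:kernel1} altogether: it writes $|K_{2,z}(w)|=|\langle K_{2,z},K_w\rangle_\phi|\simeq \rho(z)|K_w(z)|e^{-\phi(z)}$ via the reproducing property and \eqref{prop:kerneldiagonal}, applies the submean-value inequality (\lemmaref{lemma:estimates}) to bound this by $\|K_w\chi_{D(z)}\|_{2,\phi}$, and then uses \lemmaref{lemma:ball} to conclude $D(z)\subset D(0,|z|/2)^c$ for large $|z|$, so the result follows simply from $K_w\in L^2_\phi$ and the vanishing of tails. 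Your argument is more direct but leans on the deeper Marzo--Ortega-Cerd\`a pointwise bound \eqref{eq:kernel1}; the paper's version needs only the elementary submean inequality and the membership $K_w\in\gFockh$. Both approaches rest on the same growth fact $\rho(z)=o(|z|)$---you derive it explicitly from \lemmaref{lemma:rhooutsideballs}, while the paper hides it inside the proof of \lemmaref{lemma:ball}. One tiny wording slip: the lower bound you invoke is the \emph{first} inequality in \eqref{dist:estimate2}, not the second.
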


\begin{proof}
	Let $w\in\C$. Then, by \eqref{prop:kerneldiagonal}, \lemmaref{lemma:estimates} and \eqref{eq:rhoinballs}, we have that
	\begin{align*}
		\abs{K_{2,z}(w)} & = \abs{\fpsc{K_{2,z}, K_w}} \simeq \rho(z)\abs{K_w(z)} \ef{z} \lesssim \norm{K_w \chi_{D(z)}}_{2,\phi}.
	\end{align*}
	So, by \lemmaref{lemma:ball}, $\abs{K_{2,z}(w)}  \lesssim \norm{K_w \chi_{D(0,\abs{z}/2)^c}}_{2,\phi}\to 0,$
	as $\abs{z}\to \infty$, and we are done.
\end{proof}

%
%

\begin{proof}[Proof of \thmref{thm:carleson2}]
	\imply{(\ref{carl21})}{(\ref{carl22})} The hypothesis shows that $\F:= \{f\in\gFock \, \colon \norm{f}_{p,\phi} \leq 1\}$ is relatively compact in $L^p(\C,\dist\nu)$, where $\dist \nu = \ee^{-p\phi}\dist \mu$. So \lemmaref{lemma:compact}(a) implies that \eqref{eq:cues} holds. Then, by Lemmas~\ref{lemma:carleson1} and \ref{lemma:ball}, there is $r>0$ such that
	$$\widehat{\mu}_r(z)\simeq \norm{K_{p,z}\chi_{D^r(z)}}_{L^p(\C,\dist\nu)}^p \leq  \norm{K_{p,z}\chi_{D(0,\abs{z}/2)^c}}_{L^p(\C,\dist\nu)}^p  \longrightarrow 0, \quad \textrm{as\ } \abs{z}\to\infty.$$
	\imply{(\ref{carl22})}{(\ref{carl23})} By \lemmaref{lemma:mumean}(a) there is a constant $C>0$ such that
	\begin{align*}
		\tld{\mu}(z) = \int_\C \abs{K_{2,z} e^{-\phi}}^2 \d \mu& \leq C \int_\C \abs{K_{2,z}e^{-\phi}}^2  \widehat{\mu}_r \d A.
	\end{align*}
	Then, for every $s>0$, we have that 
	$$\tld{\mu}(z) \leq C \infnorm{ \widehat{\mu_r}\chi_{D^s(0)}} \int_{D^s(0)} \abs{K_{2,z} e^{-\phi}}^2 \d A + C\sup_{w\in D^s(0)^c}\widehat{\mu}_r(w).$$
	The limit of the second term of the above sum is $0$, as $s\to \infty$, by the hypothesis. The locally finiteness of $\mu$ and \eqref{eq:rhoinballs} show that $ \infnorm{ \widehat{\mu}_r\chi_{D^s(0)}} < \infty$, for every $s>0$. Then \lemmaref{lemma:kzweak},  \eqref{eq:est}, \eqref{eq:rhoinballs} and the dominated convergence theorem imply that the first term goes to $0$ as $\abs{z} \to \infty$, for every $s>0$. Hence $\tld{\mu}(z) \to 0$, as $\abs{z} \to \infty$.
	 \imply{(\ref{carl13})}{(\ref{carl11})} We are going to prove that $\mu$ is a vanishing Fock-Carleson measure for $\gFock$
        by applying \lemmaref{lemma:compact}(b) to $\dist \nu = \ee^{-p\phi} \dist \mu$ and $\F:= \{f\in \gFock \,\colon \norm{f}_{p,\phi} \leq 1\}$. By \eqref{eq:est} and \eqref{eq:rhoinballs} it is clear that $\F$ is a locally bounded family of entire functions. Now we want to prove that \eqref{eq:cues} holds. By \lemmaref{lemma:mumean}(b) there are constants $r, R_0, C >0 $ satisfying 
	 $$\int_{D(0,R)^c} \abs{f \ee^{-\phi}}^p \d \mu \leq C \int_{D(0,R/2)^c} \abs{f\ee^{-\phi}}^p \widehat{\mu}_r\d A,$$
	for any $R \geq R_0$ and $f\in \H(\C)$. Therefore by \lemmaref{lemma:carleson1}
$$
\sup_{f\in\F} \int_{D(0,R)^c} \abs{f \ee^{-\phi}}^p \d \mu \lesssim \sup_{\abs{z}\geq R/2} \tld{\mu} (z)
\qquad(R\geq R_0),
$$
 and hence the hypothesis $\tld{\mu}(z) \rightarrow 0$, as $\abs{z} \to \infty$, implies that \eqref{eq:cues} holds.
\end{proof}

\section{Toeplitz Operators} \label{sec:toeplitz}

The {\em Toeplitz operator $\Tz$ with symbol $\mu\in\M$} is defined to be
$$
T_\mu f(z) := \int_\C f(w)\overline{K_z(w)} \efh{w} \d\mu (w) \quad (z\in \C).
$$
Note that $T_\mu f$ is defined if the function $f$ satisfies $f\,\overline{K}_z\, e^{-2\phi}\in L^1(\C,\d\mu)$, for every $z\in\C$.  

The goal of this section is to study the boundedness and compactness of  the Toeplitz operator $\Tz$ on $\gFock$  in terms of the symbol $\mu$.

\subsection{Boundedness}

In this subsection we characterize the boundedness of the Toeplitz operator $\Tz$ acting on $\gFock$ for $1\leq p <\infty$. Recall that  we say that $\Tz$ is bounded on $\gFock$ when, for every $f\in\gFock$,  $\Tz f$ is an entire function and  $\|\Tz f\|_{p,\phi}\lesssim\|f\|_{p,\phi}$.  

\begin{theorem}\label{thm:Tzbounded}
	Let $\mu\in\M$ and $1 \leq p < \infty$. Then the following assertions are equivalent:
	\begin{enumerate}
\item $T_{\mu}$ is bounded on $\gFock$.  \label{bound:first}
\item $\Tz K_{p,z}\in\gFock$, for every $z\in\C$, and $M_{p,\mu}:=\sup_{z\in\C}\|T_{\mu}K_{p,z}\|_{p,\phi}<\infty$.\label{bound:second}
\item $\mu$ is a $\phi$-Fock-Carleson measure.\label{bound:third}
	\end{enumerate}
Moreover, 
 $\norm{\Tz}_{\gFock\to\gFock}\simeq M_{p,\mu}\simeq\|i_{p,\mu}\|^p$.	
\end{theorem}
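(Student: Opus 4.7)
The plan is to prove the cyclic chain $(\ref{bound:first})\Rightarrow(\ref{bound:second})\Rightarrow(\ref{bound:third})\Rightarrow(\ref{bound:first})$, with the norm equivalence $\|\Tz\|_{\gFock\to\gFock}\simeq M_{p,\mu}\simeq\|i_{p,\mu}\|^p$ emerging from the quantitative form of each step.

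The implication $(\ref{bound:first})\Rightarrow(\ref{bound:second})$ is immediate, since $\|K_{p,z}\|_{p,\phi}=1$ forces $M_{p,\mu}\le\|\Tz\|_{\gFock\to\gFock}$. For $(\ref{bound:second})\Rightarrow(\ref{bound:third})$ I would test the hypothesis on $K_{p,z}$ evaluated at the point $z$ itself. Unwinding the definition of $\Tz$ produces
$$
\Tz K_{p,z}(z)=\frac{1}{\|K_z\|_{p,\phi}}\int_{\C}|K_z(w)|^2\ee^{-2\phi(w)}\d\mu(w)=\frac{\|K_z\|_{2,\phi}^2}{\|K_z\|_{p,\phi}}\,\tld{\mu}(z),
$$
while the pointwise growth estimate \eqref{eq:est} applied to $\Tz K_{p,z}\in\gFock$ yields $|\Tz K_{p,z}(z)|\lesssim \ee^{\phi(z)}\rho(z)^{-2/p}M_{p,\mu}$. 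The kernel norm identities of \propref{prop:normkz} give $\|K_z\|_{2,\phi}^2/\|K_z\|_{p,\phi}\simeq \ee^{\phi(z)}\rho(z)^{-2/p}$, so comparing the two displays produces $\tld{\mu}(z)\lesssim M_{p,\mu}$ uniformly in $z$. The remainder of $(\ref{bound:third})$, together with $\|i_{p,\mu}\|^p\lesssim M_{p,\mu}$, then follows from \thmref{thm:carleson1}.

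The bulk of the work lies in $(\ref{bound:third})\Rightarrow(\ref{bound:first})$. First I would check that $\Tz f$ is a well-defined entire function for every $f\in\gFock$: since $\overline{K_z(w)}=K_w(z)$ is holomorphic in $z$, differentiation under the integral sign is justified by the mean-value inequality for the subharmonic function $z\mapsto|K_z(w)|$ combined with H\"older's inequality on $\d\mu$ and the pointwise kernel estimates of \propref{prop:normkz}. For the norm bound the pointwise inequality
$$
|\Tz f(z)|\,\ee^{-\phi(z)}\le \int_{\C} J(z,w)\,|f(w)|\,\ee^{-\phi(w)}\,\d\mu(w),\qquad J(z,w):=|K_z(w)|\,\ee^{-\phi(z)-\phi(w)},
$$
reduces the question to boundedness of the integral operator $S$ with kernel $J$ from $L^p(\C,\d\mu)$ to $L^p(\C,\d A)$. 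I plan to run a Schur test with constant weights: the symmetry $|K_z(w)|=|K_w(z)|$ together with \lemmaref{lemma:kernel}(b) gives $\int_{\C} J(z,w)\,\d A(z)\le C$ uniformly in $w\in\C$, while applying \lemmaref{lemma:mumean}(a) with exponent $p=1$ to $f=K_z$ followed by \lemmaref{lemma:kernel}(b) yields
$$
\int_{\C} J(z,w)\,\d\mu(w)\lesssim \|\widehat{\mu}_r\|_\infty,\qquad z\in\C.
$$
(For $p=1$ only Fubini together with the first bound is needed.) Combining the resulting Schur estimate $\|S\|_{L^p(\d\mu)\to L^p(\d A)}\lesssim\|\widehat{\mu}_r\|_\infty^{1/p'}$ with the Carleson inequality $\||f|\,\ee^{-\phi}\|_{L^p(\d\mu)}\le\|i_{p,\mu}\|\,\|f\|_{p,\phi}$ and the equivalence $\|i_{p,\mu}\|^p\simeq\|\widehat{\mu}_r\|_\infty$ of \thmref{thm:carleson1} closes the implication and produces $\|\Tz f\|_{p,\phi}\lesssim \|i_{p,\mu}\|^p\,\|f\|_{p,\phi}$.

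The main obstacle is precisely this Schur step, where the two measures $\d\mu$ and $\d A$ appear on opposite sides of the kernel. The crucial device is \lemmaref{lemma:mumean}(a), which effectively dominates $\d\mu$ by $\widehat{\mu}_r\,\d A$ and thereby converts the mixed-measure integral into the $L^1$-kernel bound of \lemmaref{lemma:kernel}(b), at the cost of a factor $\|\widehat{\mu}_r\|_\infty$. Once this reduction is in place the only remaining bookkeeping is the identification $\|i_{p,\mu}\|^p\simeq\|\widehat{\mu}_r\|_\infty$ from \thmref{thm:carleson1} and the kernel norm identities of \propref{prop:normkz}.
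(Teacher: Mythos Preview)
Your proposal is correct and follows essentially the same approach as the paper. The paper packages the $(\ref{bound:second})\Rightarrow(\ref{bound:third})$ computation as a separate lemma (\lemmaref{lemma:Tzbnd}) with a localized version $\|\chi_{D^r(z)}\Tz K_{p,z}\|_{p,\phi}$ in view of the later compactness argument, and it packages your Schur test as \lemmaref{fock-carleson-measure:bounded}, bounding $\int_\C J(z,w)\d\mu(w)$ directly by $\|i_{1,\mu}\|$ rather than via \lemmaref{lemma:mumean}(a) and $\|\widehat{\mu}_r\|_\infty$; but these are equivalent by \thmref{thm:carleson1} and the underlying arguments are identical.
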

The fact that the linear span of the reproducing kernels $K_{p,z}$ is dense in $\gFock$ (see~\corolref{corol:dense})  justifies somehow the occurrence of the assertion (2) in the statement of \thmref{thm:Tzbounded}.

To prove \thmref{thm:Tzbounded} we need the following two lemmas.

\begin{lemma} \label{lemma:Tzbnd}
	 Let $\mu\in\M$ and $1 \leq p < \infty$. Assume that $\Tz K_z \in \gFock$, for every $z\in\C$. Then, for every $r>0$, we have that
	$$
	\tld{\mu}(z) \lesssim \norm{\chi_{D^r(z)}\Tz K_{p,z} }_{p,\phi}
	 \qquad (z\in\C).
	$$
\end{lemma}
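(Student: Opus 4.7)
The plan is to evaluate $T_\mu K_{p,z}$ at the point $z$ itself, which will directly produce the Berezin transform, and then use the submean/pointwise inequality for entire functions to trade this pointwise value for a local $L^p$ norm on $D^r(z)$.

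First, I will compute $T_\mu K_{p,z}(z)$ explicitly. Since $K_{p,z}=K_z/\gfnorm{K_z}$, unwinding the definition of $T_\mu$ gives
$$
T_\mu K_{p,z}(z)=\frac{1}{\gfnorm{K_z}}\int_\C |K_z(w)|^2\, \efh{w}\, \d\mu(w)
=\frac{\gfhnorm{K_z}^2}{\gfnorm{K_z}}\,\tld{\mu}(z),
$$
where in the last step I recognized the integral as $\gfhnorm{K_z}^2\,\tld{\mu}(z)$. Applying \propref{prop:normkz} to both $\gfhnorm{K_z}$ and $\gfnorm{K_z}$, the scalar factor simplifies to $\ee^{\phi(z)}\rho(z)^{-2/p}$ up to multiplicative constants, so
$$
T_\mu K_{p,z}(z)\,\ef{z}\simeq \rho(z)^{-2/p}\,\tld{\mu}(z).
$$
Note in particular that this quantity is non-negative and real.

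Next, since $T_\mu K_z\in \gFock$ by hypothesis, $T_\mu K_{p,z}$ is entire, so \lemmaref{lemma:estimates} applies:
$$
|T_\mu K_{p,z}(z)\,\ef{z}|^p\lesssim \int_{D^r(z)} |T_\mu K_{p,z}(w)\,\ef{w}|^p\,\frac{\d A(w)}{\rho(w)^2}.
$$
Using \lemmaref{lemma:rhoinballs} to replace $\rho(w)$ by $\rho(z)$ on $D^r(z)$, the right-hand side is bounded up to a constant by $\rho(z)^{-2}\,\|\chi_{D^r(z)}T_\mu K_{p,z}\|_{p,\phi}^p$. Inserting the estimate for the left-hand side from the previous paragraph yields
$$
\rho(z)^{-2}\,\tld{\mu}(z)^p\lesssim \rho(z)^{-2}\,\norm{\chi_{D^r(z)}T_\mu K_{p,z}}_{p,\phi}^p,
$$
and the $\rho(z)^{-2}$ factors cancel, giving the claimed inequality after taking $p$-th roots.

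The only subtle point is that \lemmaref{lemma:estimates} requires an entire function, which is guaranteed by the hypothesis; beyond that, the argument is essentially a computation combining the reproducing formula with the known two-sided asymptotics of the kernel norms in \propref{prop:normkz} and the standard mean-value type estimate for the doubling Fock setting. No obstacle is expected.
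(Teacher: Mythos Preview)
Your proof is correct and follows essentially the same approach as the paper: both compute $T_\mu K_{p,z}(z)$ to produce $\tld{\mu}(z)$ times the factor $\gfhnorm{K_z}^2/\gfnorm{K_z}\simeq \ee^{\phi(z)}\rho(z)^{-2/p}$ via \propref{prop:normkz}, and then apply the submean estimate of \lemmaref{lemma:estimates} together with \eqref{eq:rhoinballs} to pass to the local $L^p$ norm. The paper writes this as a single chain of $\simeq$ and $\lesssim$ (introducing the conjugate exponent $q$ so that $\gfhnorm{K_z}^2\simeq\gfnorm{K_z}\|K_z\|_{q,\phi}$), while you spell out the powers of $\rho(z)$ explicitly; the content is identical.
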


\begin{proof}
	It follows from Proposition~\ref{prop:normkz}, \lemmaref{lemma:estimates} and \eqref{eq:rhoinballs}:
$$
\tld{\mu}(z)
 = \frac{\Tz K_z(z)}{\norm{K_z}_{2,\phi}^2} \simeq  \frac{\Tz K_{p,z}(z)}{\norm{K_z}_{q,\phi}}
 \lesssim \frac{\ee^{\phi(z)}\norm{\chi_{D^r(z)}\Tz K_{p,z}}_{p,\phi}}{ \norm{K_z}_{q,\phi} \rho(z)^{2/p}} \simeq \norm{\chi_{D^r(z)}\Tz K_{p,z}}_{p,\phi},
$$
where $q$ is the conjugate exponent of $p$.
\end{proof}

\begin{lemma} \label{fock-carleson-measure:bounded}\hspace*{\fill}
\begin{enumerate}[(a)]
\vspace*{3pt}
\item If $\mu\in\M$ then
\begin{equation}\label{definition:modulus:Toeplitz:operator}
\TT f(z):=\int_{\C} f(w)|K_z(w)|e^{-2\phi(w)}\d\mu(w)		
\end{equation}
defines a bounded linear operator from $L^1(\C,e^{-\phi}\d\mu)$
to $L^1_{\phi}$, and its norm $\|\TT\|_1$ satisfies 
$\|\TT\|_1\lesssim 1$.
\vspace*{3pt}
\item If $\mu$ is a $\phi$-Fock-Carleson measure then \eqref{definition:modulus:Toeplitz:operator} defines a bounded linear operator from $L^p(\C,e^{-p\phi}\d\mu)$
to $L^p_{\phi}$, for every $1<p<\infty$, and its norm 
$\|\TT\|_p$ satisfies $\|\TT\|_p\lesssim \|i_{p,\mu}\|^{p-1}$.  
\end{enumerate}	
\end{lemma}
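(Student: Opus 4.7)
The plan is to prove the two parts separately, since part (a) is a direct Tonelli computation whereas part (b) requires a genuine Schur-type argument.

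For part (a), I would estimate
\[
\int_{\C}|\TT f(z)|\,\ef{z}\,\d A(z)\leq \int_{\C}\int_{\C}|f(w)|\,|K_z(w)|\,\efh{w}\,\ef{z}\,\d\mu(w)\,\d A(z),
\]
bring the absolute value inside and swap the order of integration by Tonelli. Using the symmetry $|K_z(w)|=|K_w(z)|$, the inner integral in $z$ becomes $\int_{\C}|K_w(z)|\,\ef{z}\,\d A(z)$, which Lemma~\ref{lemma:kernel}(b) bounds by $\lesssim\ee^{\phi(w)}$. Collecting factors, the right-hand side collapses to a constant multiple of $\int_{\C}|f(w)|\ef{w}\,\d\mu(w)=\|f\|_{L^1(\C,\,\ee^{-\phi}\!\d\mu)}$, which proves $\|\TT\|_1\lesssim 1$.

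For part (b), the plan is a Schur-type test applied to the non-negative kernel
\[
J(z,w):=|K_z(w)|\,\ef{z}\,\ef{w}.
\]
Introducing $\tilde f:=f\ee^{-\phi}$, one has $|\TT f(z)|\ef{z}\leq\int_{\C}\tilde f(w)\,J(z,w)\,\d\mu(w)$. Factoring $J=J^{1/q}\cdot J^{1/p}$ (with $q$ the conjugate exponent of $p$) and applying H\"older's inequality in $w$, followed by Tonelli in $z$, reduces the desired $L^p$ bound to two kernel estimates:
\[
\mathrm{(i)}\ \sup_{z\in\C}\int_{\C}J(z,w)\,\d\mu(w)\lesssim\|i_{p,\mu}\|^p,\qquad \mathrm{(ii)}\ \sup_{w\in\C}\int_{\C}J(z,w)\,\d A(z)\lesssim 1.
\]
Assuming (i) and (ii), the computation yields $\|\TT f\|_{p,\phi}^p\lesssim \|i_{p,\mu}\|^{p^2/q}\,\|f\|_{L^p(\C,\,\ee^{-p\phi}\d\mu)}^p$; taking $p$-th roots and using the identity $p/q=p-1$ gives exactly the asserted bound.

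Estimate (ii) follows at once from Lemma~\ref{lemma:kernel}(b) applied to $K_w$. The delicate step --- and the main obstacle --- is (i), because $|K_z(w)|$ is not holomorphic in $w$, so one cannot directly invoke the Fock-Carleson inequality of Theorem~\ref{thm:carleson1} on the $\mu$-integral. The key idea is to apply Lemma~\ref{lemma:mumean}(a) with exponent $1$ and the \emph{entire} function $w\mapsto K_z(w)$, which transfers the $\d\mu$-integral to an $\widehat{\mu}_r\,\d A$-integral:
\[
\int_{\C}|K_z(w)|\,\ef{w}\,\d\mu(w)\lesssim\|\widehat{\mu}_r\|_\infty\int_{\C}|K_z(w)|\,\ef{w}\,\d A(w)\lesssim\|\widehat{\mu}_r\|_\infty\,\ee^{\phi(z)},
\]
the last step again via Lemma~\ref{lemma:kernel}(b). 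Finally, Theorem~\ref{thm:carleson1} supplies the identification $\|\widehat{\mu}_r\|_\infty\simeq\|i_{p,\mu}\|^p$, which closes (i) and completes the plan.
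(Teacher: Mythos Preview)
Your argument is correct and coincides with the paper's proof almost verbatim: part (a) is the same Tonelli computation, and part (b) is the same H\"older/Schur split with the kernel $J(z,w)=|K_z(w)|\,\ef{z}\,\ef{w}$, reducing matters to the two one-sided bounds you label (i) and (ii).

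The only substantive difference is how you justify (i), and here your framing of the ``obstacle'' is slightly off. You write that $|K_z(w)|$ is not holomorphic in $w$ and that therefore the Fock--Carleson inequality cannot be applied directly. But the Carleson embedding in \thmref{thm:carleson1} (or equivalently the definition of $\|i_{1,\mu}\|$) requires only that the underlying function be entire, not that its modulus be; since $w\mapsto K_z(w)$ \emph{is} entire and belongs to $F^1_\phi$, one may apply the $L^1$-Carleson bound to $K_z$ directly:
\[
\int_{\C}|K_z(w)|\,\ef{w}\,\d\mu(w)\le\|i_{1,\mu}\|\,\|K_z\|_{1,\phi}\lesssim\|i_{1,\mu}\|\,\ee^{\phi(z)},
\]
which is precisely the paper's route. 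Your detour through \lemmaref{lemma:mumean}(a) and $\|\widehat{\mu}_r\|_\infty$ is of course valid --- it is essentially the proof of the Carleson embedding unpacked --- and both approaches finish with \thmref{thm:carleson1} to convert the constant to $\|i_{p,\mu}\|^p$. So the two proofs are the same up to whether one quotes the Carleson inequality as a black box or rederives it on the spot.
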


\begin{proof}

\noindent
(a) It follows from Tonelli's theorem and
 \lemmaref{lemma:kernel}(b):
\begin{align*}
 \|\TT f\|_{1,\phi}
 &\le \int_{\C}\left(
  \int_{\C}|f(w)||K_z(w)|e^{-2\phi(w)}\d\mu(w)
  \right)e^{-\phi(z)}\d A(z) \\
 &=  \int_{\C}\left(
 \int_{\C}|K_w(z)|e^{-\phi(z)}\d A(z)
 \right)|f(w)|e^{-2\phi(w)}\d\mu(w) \\
 &\lesssim \int_{\C}|f(w)|e^{-\phi(w)}\d\mu(w)=
 \|f\|_{L^1(\C,e^{-\phi}\d\mu)}.
\end{align*}
	
\noindent
(b) Assume $\mu$ is a $\phi$-Fock-Carleson measure. First, H\"older's inequality shows that
$$
|\TT f(z)e^{-\phi(z)}|\le 
\left(\int_{\C}
|f(w)e^{-\phi(w)}|^p|K_z(w)|e^{-\phi(w)-\phi(z)}\d\mu(w)
\right)^{1/p} M(z)^{1/q},
$$	
where $q$ is the conjugate exponent of $p$ and 
$$
M(z):=\int_{\C}|K_z(w)|e^{-\phi(w)-\phi(z)}\d\mu(w)
\le\|i_{1,\mu}\|\,\|K_z\|_{1,\phi} 
\,e^{-\phi(z)}\lesssim\|i_{1,\mu}\|,
$$
by the hypothesis and~{\lemmaref{lemma:kernel}(b).}
 Therefore
 $$
 |\TT f(z) e^{-\phi(z)}|^p\lesssim \|i_{1,\mu}\|^{p/q}
\int_{\C}|f(w)e^{-\phi(w)}|^p|K_z(w)|e^{-\phi(z)-\phi(w)}
\d\mu(w),
 $$
 and hence Tonelli's theorem, \lemmaref{lemma:kernel}(b)
  and~{\thmref{thm:carleson1}}  imply that
\begin{align*}
\|\TT f\|^p_{p,\phi}
&\lesssim \|i_{1,\mu}\|^{p/q} \int_{\C} 
\left(\int_{\C}|K_w(z)|e^{-\phi(z)-\phi(w)}\d A(z)\right)
|f(w)e^{-\phi(w)}|^p\d\mu(w) \\
&\lesssim \|i_{1,\mu}\|^{p/q}\|f\|^p_{L^p(\C,e^{-p\phi}\d\mu)}
\simeq \|i_{p,\mu}\|^{p(p-1)}\|f\|^p_{L^p(\C,e^{-p\phi}\d\mu)}.
\qedhere
\end{align*}  
\end{proof}

\begin{proof}[Proof of \thmref{thm:Tzbounded}]
\imply{(\ref{bound:first})}{(\ref{bound:second})} 
The boundedness of $\Tz$ on $\gFock$ clearly implies that 
$\Tz K_{p,z}\in\gFock$, for every $z\in\C$, and 
$M_{p,\mu}\le \norm{\Tz}_{\gFock\to\gFock}$.
\imply{(\ref{bound:second})}{(\ref{bound:third})} 
The hypotheses and \lemmaref{lemma:Tzbnd} show that
$$
\tld{\mu}(z) \lesssim \gfnorm{\Tz K_{p,z}} 
\le M_{p,\mu} \qquad (z\in\C), 
$$
so, by \thmref{thm:carleson1}, $\mu$ is a 
$\phi$-Fock-Carleson measure and 
$\|i_{p,\mu}\|^p\lesssim M_{p,\mu}$.
\imply{(\ref{bound:third})}{(\ref{bound:first})} Assume that 
$\mu$ is a $\phi$-Fock-Carleson measure. First we want to prove that
$\Tz f\in\H(\C)$, for every $f\in\gFock$. We proceed by 
differentiation under the integral sign as in the proof 
of~{\thmref{thm:projbnd}}. Following that proof we only have to
 check that $G_{z_0}\in L^1(\C,\d\mu)$, which is proved by 
 using the same arguments which show that 
 $G_{z_0}\in L^1(\C)$, since $\mu$ is a $\phi$-Fock-Carleson 
 measure.     

Now the boundedness of $\Tz$ on $\gFock$ follows from 
\lemmaref{fock-carleson-measure:bounded} and our hypothesis:
$$
\gfnorm{\Tz f}\le\|\TT f\|_{p,\phi}
\lesssim\|i_{p,\mu}\|^{p-1}\|f\|_{L^p(\C, e^{-p\phi}\d\mu)}
\le\|i_{p,\mu}\|^p\|f\|_{p,\phi}\quad(f\in F^p_{\phi}).
$$
In particular,  $\norm{\Tz}_{\gFock\to\gFock}\lesssim\|i_{p,\mu}\|^p$.
\end{proof}

\subsection{Compactness}

In this section we characterize the compactness of the Toeplitz operator $\Tz$ on $\gFock$ for $1\leq p < \infty$. 

\begin{theorem} \label{thm:Tzcompact}
	Let $\mu\in\M$ and $1\leq p < \infty$. Then the following statements are equivalent:
	\begin{enumerate}
		\item $\Tz$ is compact on $\gFock$. \label{compact:first}
		\item $\mu$ is a vanishing $\phi$-Fock-Carleson measure. \label{compact:second}
	\end{enumerate}
\end{theorem}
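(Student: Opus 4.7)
The plan is to treat the two implications separately.

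\emph{Necessity} ($\Tz$ compact $\Rightarrow$ $\mu$ vanishing $\phi$-Fock-Carleson). Since $\gfnorm{K_{p,z}}=1$, the normalized reproducing kernels $\{K_{p,z}\}_{z\in\C}$ form a bounded family in $\gFock$, so compactness of $\Tz$ makes $\{\Tz K_{p,z}\}_{z\in\C}$ relatively compact in $\gFock$. Then \corolref{corol:compact} yields
$$
\lim_{R\to\infty}\sup_{z\in\C}\gfnorm{\chi_{D(0,R)^c}\Tz K_{p,z}}=0.
$$
Fix any $r>0$. By \lemmaref{lemma:ball} there exists $R_0>0$ such that $D^r(z)\subset D(0,|z|/2)^c$ whenever $|z|\ge R_0$. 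Combining these two facts, one has $\gfnorm{\chi_{D^r(z)}\Tz K_{p,z}}\to 0$ as $|z|\to\infty$. Then \lemmaref{lemma:Tzbnd} delivers $\tld{\mu}(z)\to 0$ as $|z|\to\infty$, and \thmref{thm:carleson2} concludes that $\mu$ is a vanishing $\phi$-Fock-Carleson measure.

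\emph{Sufficiency} ($\mu$ vanishing $\phi$-Fock-Carleson $\Rightarrow$ $\Tz$ compact). Such $\mu$ is in particular a $\phi$-Fock-Carleson measure, so $\Tz$ is already bounded on $\gFock$ by \thmref{thm:Tzbounded}. To prove compactness, let $(f_n)$ be a bounded sequence in $\gFock$. By the pointwise estimate \eqref{eq:est}, $(f_n)$ is locally uniformly bounded, so Montel's theorem provides a subsequence $(f_{n_k})$ converging locally uniformly to some entire $f$, and Fatou's lemma places $f\in\gFock$. Setting $g_k:=f_{n_k}-f$, it suffices to show $\gfnorm{\Tz g_k}\to 0$. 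The trivial pointwise bound $|\Tz g_k|\le\TT(|g_k|)$ together with \lemmaref{fock-carleson-measure:bounded} yields
$$
\gfnorm{\Tz g_k}\lesssim \left(\int_\C |g_k|^p\,\ee^{-p\phi}\d\mu\right)^{1/p}
$$
(with the obvious $p=1$ modification via part (a) of that lemma), so the task reduces to showing this integral tends to $0$.

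To this end, I would split the integral over $D(0,R)$ and $D(0,R)^c$. On the tail, \lemmaref{lemma:mumean}(b) gives
$$
\int_{D(0,R)^c}|g_k|^p\,\ee^{-p\phi}\d\mu \lesssim \sup_{|w|\ge R/2}\widehat{\mu}_r(w)\;\gfnorm{g_k}^p,
$$
which is uniformly small in $k$ once $R$ is large, by the vanishing assumption on $\widehat{\mu}_r$. On the compact piece, $\mu(D(0,R))<\infty$ together with uniform convergence $g_k\to 0$ on $\cl{D(0,R)}$ forces $\int_{D(0,R)}|g_k|^p\,\ee^{-p\phi}\d\mu\to 0$ for each fixed $R$. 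A standard $\varepsilon/2$-argument then delivers $\gfnorm{\Tz g_k}\to 0$, so the bounded sequence $(f_n)$ has a subsequence whose $\Tz$-image converges, which proves compactness. The main obstacle is this sufficiency direction: one must simultaneously wield (a) the domination $|\Tz g|\le\TT(|g|)$ plus the $L^p$-boundedness of $\TT$ from \lemmaref{fock-carleson-measure:bounded} in order to reduce to a genuine $\mu$-integral estimate, and (b) the mean-value transfer from $\d\mu$-integrals to $\widehat{\mu}_r\,\d A$-integrals furnished by \lemmaref{lemma:mumean}(b), which is precisely the tool that converts the vanishing condition on $\widehat{\mu}_r$ into uniform tail smallness across the sequence $(g_k)$.
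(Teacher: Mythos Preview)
Your proof is correct. The necessity direction follows the paper's argument essentially verbatim: compactness makes $\{\Tz K_{p,z}\}_z$ relatively compact, \corolref{corol:compact} gives the uniform tail condition, and then \lemmaref{lemma:ball} plus \lemmaref{lemma:Tzbnd} force $\tld{\mu}(z)\to0$.

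For sufficiency you take a genuinely different route from the paper. The paper verifies the relative-compactness criterion of \corolref{corol:compact} directly for $\Tz\F$ (where $\F$ is the closed unit ball of $\gFock$): it estimates $\gfnorm{(\Tz f)\chi_{D(0,R)^c}}$ uniformly over $f\in\F$ by splitting the \emph{inner} $\mu$-integral into $D(0,S)$ and $D(0,S)^c$, which forces the introduction of an auxiliary quantity $M(R,S)=\sup_{z\in D(0,S)}e^{-\phi(z)}\int_{D(0,R)^c}|K_z|e^{-\phi}\d A$ and a separate lemma showing $M(R,S)\to0$ as $R\to\infty$. You instead work sequentially: extract via Montel a locally uniformly convergent subsequence, use the domination $|\Tz g|\le\TT(|g|)$ together with \lemmaref{fock-carleson-measure:bounded} to reduce to $\int|g_k|^p e^{-p\phi}\d\mu\to0$, and then dispose of the tail with \lemmaref{lemma:mumean}(b) and of the compact core by uniform convergence. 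Your argument is shorter and avoids the kernel-tail lemma entirely; the paper's approach, by contrast, gives a uniform tail estimate for all of $\Tz\F$ at once and does not need Montel or Fatou to manufacture a limit function. Both are clean; yours is the more elementary of the two.
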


 In order to prove \thmref{thm:Tzcompact} we need the following lemma.

\begin{lemma} \label{lemma:Tzcompact}
For $R,S>0$ let
\begin{equation} \label{M(R,S)}
M(R,S):=\sup_{z\in D(0,S)}e^{-\phi(z)}\int_{D(0,R)^c}|K_z(w)|e^{-\phi(w)}\d A(w).
\end{equation}
Then $M(R,S)\to 0$, as $R\to\infty$, for every $S>0$.
\end{lemma}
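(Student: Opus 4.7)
The plan is to reduce the statement to Lemma~\ref{lemma:kernel}(a), which already provides the decay we need, but for the ``intrinsic'' complement $D^{r}(z)^c$ rather than the ``extrinsic'' complement $D(0,R)^c$. The key observation is that for $z$ ranging over the fixed bounded set $D(0,S)$, a Euclidean complement $D(0,R)^c$ with $R$ large is contained in $D^{r(R)}(z)^c$ for some $r(R)$ that tends to infinity with $R$ uniformly in~$z$.

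First I would use the Lipschitz estimate \eqref{rho:Lipschitz} to obtain a uniform bound $\rho(z)\le M_S:=\rho(0)+S$ on $D(0,S)$. Then, for $R>S$, I set $r(R):=(R-S)/M_S$, so that $r(R)\to\infty$ as $R\to\infty$. For any $z\in D(0,S)$ and $w\in D^{r(R)}(z)$ one has
\[
|w|\le|z|+r(R)\,\rho(z)\le S+r(R)\,M_S=R,
\]
hence $D^{r(R)}(z)\subset D(0,R)$, or equivalently $D(0,R)^c\subset D^{r(R)}(z)^c$, for every $z\in D(0,S)$.

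Second, for $R$ large enough that $r(R)\ge 1$, Lemma~\ref{lemma:kernel}(a) applied with $r=r(R)$ gives, uniformly in $z\in D(0,S)$,
\[
e^{-\phi(z)}\int_{D(0,R)^c}|K_z(w)|\,e^{-\phi(w)}\,\d A(w)
\le e^{-\phi(z)}\int_{D^{r(R)}(z)^c}|K_z(w)|\,e^{-\phi(w)}\,\d A(w)
\le C(r(R)).
\]
Taking the supremum over $z\in D(0,S)$ yields $M(R,S)\le C(r(R))$, and since $C(r)\to 0$ as $r\to\infty$ by Lemma~\ref{lemma:kernel}(a), the conclusion $M(R,S)\to 0$ follows.

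There is no real obstacle here: the whole argument is a uniformization trick, turning Euclidean tails into $\rho$-adapted tails at essentially no cost because $\rho$ is Lipschitz and we are working over a compact base region $D(0,S)$. The only mild point to keep track of is the dependence of $r(R)$ on $S$, but since $S$ is fixed throughout, this causes no difficulty.
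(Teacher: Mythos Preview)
Your proof is correct and follows essentially the same route as the paper: define a uniform upper bound for $\rho$ on $D(0,S)$, use it to show $D(0,R)^c\subset D^{r(R)}(z)^c$ with $r(R)\to\infty$, and then invoke Lemma~\ref{lemma:kernel}(a). The only cosmetic difference is that the paper writes $\delta_S:=\sup_{z\in D(0,S)}\rho(z)$ without further comment, whereas you make this bound explicit as $\rho(0)+S$ via the Lipschitz estimate~\eqref{rho:Lipschitz}; you are also slightly more careful in noting the restriction $r(R)\ge1$ required by Lemma~\ref{lemma:kernel}(a).
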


\begin{proof}
Let $S>0$ and $\delta_S:=\sup_{z\in D(0,S)}\rho(z)$. Then 
$D^r(z)\subset D(0,S+r\delta_S)$, for every $r>0$ and $z\in D(0,S)$.
Therefore $r(R,S):=(R-S)/\delta_S$ satisfies that 
$$
D(0,R)^c\subset D^{r(R,S)}(z)^c,\quad\mbox{for any  $R>S$ and $z\in D(0,S)$.}
$$
Moreover, note that  $r(R,S)\to \infty$, as $R\to\infty$.
Hence, by \lemmaref{lemma:kernel}(a), we conclude that $M(R,S)\le C(r(R,S))\to0$, as $R\to\infty$, and we are done.
\end{proof}

\begin{proof}[Proof of \thmref{thm:Tzcompact}] 
	Let $\F:= \{f\in\gFock : \norm{f}_{p,\phi}\leq 1\}$ be the closed unit ball in $\gFock$.
	\imply{(\ref{compact:first})}{(\ref{compact:second})} The compactness of $\Tz$ on $\gFock$ and \corolref{corol:compact} show that $\Tz \F$ satisfies \eqref{eq:compact}. Then Lemmas~\ref{lemma:Tzbnd} and \ref{lemma:ball} imply that
$$
	\tld{\mu}(z)  \lesssim \norm{(\Tz K_{p,z}) \chi_{D^r(z)}}_{p,\phi}
                           \le\norm{(\Tz K_{p,z})\chi_{D(0,\abs{z}/2)^c}}_{p,\phi} \rightarrow 0,
$$
	as $\abs{z}\to\infty$. Therefore (\ref{compact:second}) holds by \thmref{thm:carleson2}.
	\imply{(\ref{compact:second})}{(\ref{compact:first})} Assume that $\mu$ is a vanishing $\phi$-Fock-Carleson measure. Then $\mu$ is also a $\phi$-Fock-Carleson measure and so, by \thmref{thm:Tzbounded}, $\Tz$ is bounded on $\gFock$. 
	
	Now we are going to show that $\Tz$ is compact on $\gFock$, that is, $\Tz \F$ is relatively compact in $\gFock$. In order to do that we will apply \corolref{corol:compact}. Since $\Tz$ is bounded on $\gFock$, $\Tz\F$ is bounded in $\gFock$, so it only remains to prove that $\Tz\F$ satisfies \eqref{eq:compact}. 

	Let $f\in\gFock$ and $R>0$. Then
\begin{equation}\label{trivial:estimate:cuep:Tmu}
\gfnorm{(\Tz f) \chi_{D(0,R)^c}}^p\le
\int_{D(0,R)^c}I(z)^pe^{-p\phi(z)}\d A(z),
\end{equation}
where
$$
I(z):=\int_{\C}|f(w)||K_z(w)|e^{-2\phi(w)}\d\mu(w).
$$
Now, if $p>1$ and $q$ is the conjugate exponent of $p$, $I(z)$ is estimated by using H\"{o}lder's inequality, the  fact that $\mu$ is a $\phi$-Fock-Carleson measure and \lemmaref{lemma:kernel}(b) as follows
\begin{eqnarray*}
I(z)
&\le&  \left(\int_{\C}|f(w)|^p|K_z(w)|e^{-(p+1)\phi(w)}\d\mu(w)\right)^{\frac1p}
          \left(\int_{\C}|K_z(w)|e^{-\phi(w)}\d\mu(w)\right)^{\frac1q} \\
&\lesssim&  e^{\frac{\phi(z)}q}   \left(\int_{\C}|f(w)|^p|K_z(w)|e^{-(p+1)\phi(w)}\d\mu(w)\right)^{\frac1p}.
\end{eqnarray*}
So, for $p>1$, we get
$$
\gfnorm{(\Tz f) \chi_{D(0,R)^c}}^p
\lesssim \int_{D(0,R)^c} \left(\int_{\C}|f(w)|^p|K_w(z)|e^{-(p+1)\phi(w)}\d\mu(w)\right) e^{-\phi(z)}\d A(z).
$$
Note that \eqref{trivial:estimate:cuep:Tmu} shows that this estimate also holds for $p=1$.
Therefore, by Tonelli's theorem, we have that
 $$
\gfnorm{(\Tz f) \chi_{D(0,R)^c}}^p
\lesssim \int_{\C}|f(w)|^pe^{-(p+1)\phi(w)}\left(\int_{D(0,R)^c} |K_z(w)|e^{-\phi(z)}\d A(z)\right) \d\mu(w).
$$
For every $S>0$, we split the above integral on $\C$ into the corresponding integrals on $D(0,S)$ and $D(0,S)^c$, which we denote by $I_S(R)$ and $J_S(R)$, respectively. 
Then \lemmaref{lemma:kernel}(b) implies that 
$$
J_S(R)\lesssim \|f\chi_{D(0,S)^c}\|^p_{L^p(\C,e^{-p\phi}\d\mu)}.
$$
Moreover, since $\mu$ is a $\phi$-Fock-Carleson measure, 
$I_S(R)\lesssim M(R,S) \|f\|_{p,\phi}$, where $M(R,S)$ is defined by \eqref{M(R,S)}. 

Therefore it turns out that there is a constant $C>0$ such that
 \begin{equation} \label{eq:tzcompact}
\sup_{f\in\F} \gfnorm{(\Tz f) \chi_{D(0,R)^c}}^p 
\leq C\left\{M(R,S)+  \sup_{f\in\F} \norm{f  \chi_{D(0,S)^c} }^p_{L^p(\C, \ee^{-p\phi}\dist\mu)}\right\},
\end{equation}
for every $R,S>0$.
	Since $\mu$ is a vanishing $\phi$-Fock-Carleson measure for $\gFock$, $\F$ is relatively compact in $L^p(\C, \ee^{-p\phi}\dist\mu)$
 and \lemmaref{lemma:compact}(a) shows that the second summand of the right-hand side term of \eqref{eq:tzcompact} goes to $0$, as $S\to \infty$. 
Moreover, $M(R,S)\to0$, as $R\to\infty$, for every $S>0$, by \lemmaref{lemma:Tzcompact}.
Hence we conclude that $\Tz\F$ satisfies~{\eqref{eq:compact}.}
\end{proof}
	
\section{Membership in the Schatten Class of $\gFockh$}
\label{sec:membership:shatten:class}

  Let $H$ be a separable complex Hilbert space.
 Recall that if $T$ is a positive operator on $H$ and $(e_n)_n$ is an orthonormal basis of $H$, then the quantity
 $$
 \sum_n\langle Te_n,e_n\rangle\in [0,\infty]
 $$
 does not depend on the basis $(e_n)_n$. It is called the {\em trace} of $T$ and it is denoted by $\tr{T}$. It is well known that if $\tr{T}<\infty$ then $T$ is compact and $\tr{T}=\sum_n\lambda_n$, where $(\lambda_n)_n$ is the sequence of eigenvalues of $T$.
 
 For $0<p<\infty$, the Schatten class $\S_p=\S_p(H)$ of $H$ is the set of all bounded linear operators $T$ on $H$ such that $\norm{T}_{\S_p}^p :=\tr{|T|^p}<\infty$,
 where $|T|$ is the positive operator defined by $|T|:= (T^* T)^{\frac{1}{2}}$.
 Then it is clear that any $T\in\S_p(H)$ is a compact operator and  $\norm{T}_{\S_p}^p= \sum_n\lambda_n^p$, where $(\lambda_n)_n$ is the sequence of eigenvalues of $|T|$.
Moreover, $(\S_p(H),\|\cdot\|_{\S_p})$ is a Banach space for $1\le p<\infty$, and a quasi-Banach space for $0<p<1$.
Furthermore, we have the triangular inequality 
\begin{equation}\label{rotfel'd:inequality}
\|T+S\|^p_{\S_p}\le\|T\|^p_{\S_p}+\|S\|^p_{\S_p},\qquad
(S,T\in\S_p(H), 0<p<1),
\end{equation} 
which is called {\em Rotfel'd inequality} (see \cite{rotfeld1,rotfeld2,thompson}).

 We refer to \cite[Chapter 1]{zhu2007operator} for the basic properties of the Schatten class operators.

In this section we want to study when $\Tz\in\S_p(\gFockh)$.
Note that if $\mu$ is a $\phi$-Fock-Carleson measure then Fubini's theorem and \thmref{thm:integrep} show that
\begin{equation}\label{positiveness:Toeplitz:operator}
\langle\Tz f, f\rangle_{\phi} = 
\int_{\C}|f|^2 e^{-2\phi} \d\mu,  
\quad \mbox{for every $f\in F^2_{\phi}$.}
\end{equation}
(The hypothesis of Fubini's theorem is fulfilled due to
Cauchy-Schwarz inequality, the boundedness of the operator 
 $\TT$ on $F^2_{\phi}$ 
 (by \lemmaref{definition:modulus:Toeplitz:operator}) and our 
 assumption  that $\mu$ is a $\phi$-Fock-Carleson measure). 
 Thus if $\Tz$ is bounded on $\gFockh$ then 
 \eqref{positiveness:Toeplitz:operator}  holds, by 
 \thmref{thm:Tzbounded}, so
 $\Tz$ is a positive operator on $\gFockh$ and, in particular, 
 $\abs{\Tz} = \Tz$.
 Therefore $\Tz\in\S_p(\gFockh)$ if and only if $\Tz$ is bounded on $\gFockh$
 and $\tr{T_{\mu}^{p}} < \infty$.

In order to state the characterization of the membership of 
$\Tz$ in $\S_p(\gFockh)$ we need the concept
of $(r,\phi)$-lattice.
For any $r>0$, an {\em $(r,\phi)$-lattice} is
 a sequence of different points in $\C$ such that $\{D^r(z_j)\}_{j\ge1}$ is a covering of $\C$ satisfying
 \begin{equation}\label{overlapping:index}
 N_r(\{z_j\}_{j\ge1}):=
 \sup_{z\in\C}\sum_{j=1}^{\infty}\chi_{D^r(z_j)}(z)<\infty.
 \end{equation}
The existence of $(r,\phi)$-lattices, for any $r>0$, is 
guaranteed by \cite[Proposition 7]{dall}.

We characterize the membership of $\Tz$  in the Schatten class 
$\S_p(\gFockh)$ as follows.

\begin{theorem} \label{thm:Tzsp}
Let $\mu\in\M$ and $0< p < \infty$. Then the following 
statements are equivalent:
\begin{enumerate}
\item \label{sp:first} $\Tz\in\S_p(\gFockh)$.
\item \label{sp:second}
       There is $r_0>0$ such that any $(r,\phi)$-lattice $\{z_j\}_{j\ge1}$ with $r\in(0,r_0)$ satisfies $\{\widehat{\mu}_r(z_j)\}_{j\ge1}\in\ell^p$.
\item \label{sp:third}
       There is an $(r,\phi)$-lattice $\{z_j\}_{j\ge1}$ such that 
       $\{\widehat{\mu}_r(z_j)\}_{j\ge1}\in\ell^p$. 
\item \label{sp:four} There is $r>0$ such that    
      $\widehat{\mu}_r \in L^p(\C,\!\d \sigma)$.                
\item \label{sp:fith} $\tld{\mu} \in L^p(\C,\! \d \sigma)$. 
\end{enumerate}
	Moreover, $\norm{T_{\mu}}_{\S_p}^p\simeq
	\|\widehat{\mu}_r\|_{L^p(\C,\!\d \sigma)}\simeq
	\|\tld{\mu}\|_{L^p(\C,\!\d \sigma)}$. 
	Recall that $\dist \sigma = \dist A/\rho^2$.
\end{theorem}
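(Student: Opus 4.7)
My plan is to decouple the proof into the equivalences among the ``function-theoretic'' conditions (2)--(5), and then to tie these to the operator-theoretic condition (1) by splitting into the three Schatten ranges $p=1$, $1<p<\infty$, and $0<p<1$.

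For the equivalences among (2)--(5), the pointwise bound $\widehat{\mu}_r\lesssim \tld{\mu}$ from \lemmaref{lemma:carleson1} gives (5)$\Rightarrow$(4). For the converse, I combine \lemmaref{lemma:mumean}(a) with the off-diagonal kernel estimate \eqref{eq:kernel1} to obtain
$$\tld{\mu}(z)\lesssim\int_{\C}\widehat{\mu}_r(w)\,G(z,w)\,\d\sigma(w),$$
with $G(z,w)\lesssim \rho(z)^2/(\rho(w)^2\exp(\fdist(z,w)^\e))$, and then Schur's test based on \lemmaref{lemma:integral:estimate} yields $L^p(\C,\d\sigma)$-boundedness of the associated integral operator. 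The lattice equivalences (2)$\iff$(3)$\iff$(4) follow from the bounded overlap \eqref{overlapping:index}, local comparability of $\rho$ via \lemmaref{lemma:rhoinballs}, and doubling of $\Delta\phi$: these ingredients give $\sum_j\widehat{\mu}_r(z_j)^p\simeq\int_{\C}\widehat{\mu}_r^p\,\d\sigma$ for any sufficiently dense $(r,\phi)$-lattice.

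Turning to condition (1), note that $\Tz\geq 0$ by \eqref{positiveness:Toeplitz:operator}, so $\norm{\Tz}_{\S_p}^p=\tr{\Tz^p}$. For $p=1$, expanding the trace against any orthonormal basis of $\gFockh$ and using Fubini yields $\tr{\Tz}=\int_{\C}K_w(w)\efh{w}\d\mu(w)\simeq\int_{\C}\d\mu/\rho^2$ by \eqref{prop:kerneldiagonal}, and a Tonelli interchange identifies this with $\norm{\widehat{\mu}_r}_{L^1(\C,\d\sigma)}$ up to constants. For $1<p<\infty$, the ``operator Berezin transform'' $T\mapsto\widetilde{T}(z):=\fpsc{TK_{2,z},K_{2,z}}$ maps $\S_1$ into $L^1(\C,\d\sigma)$ (by the $p=1$ identity applied to a general positive $T$) and bounded operators into $L^\infty(\C)$ trivially; complex interpolation then yields boundedness $\S_p\to L^p(\C,\d\sigma)$, and since $\widetilde{\Tz}=\tld{\mu}$ we get (1)$\Rightarrow$(5). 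The converse (5)$\Rightarrow$(1) is obtained by complex interpolation of the correspondence $\mu\mapsto\Tz$ between its endpoints at $p=1$ (above) and $p=\infty$ (\thmref{thm:Tzbounded}).

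The case $0<p<1$ requires more care. For sufficiency, fix an $(r,\phi)$-lattice $\{z_j\}$ and a Borel partition $\{E_j\}$ with $E_j\subset D^r(z_j)$; writing $\mu=\sum_j\mu|_{E_j}$ and applying Rotfel'd's inequality \eqref{rotfel'd:inequality} gives $\norm{\Tz}_{\S_p}^p\leq\sum_j\norm{T_{\mu|_{E_j}}}_{\S_p}^p$. Each $T_{\mu|_{E_j}}$ is ``essentially rank one'' since $K_w$ is comparable to a positive multiple of $K_{z_j}$ on $E_j$ by \eqref{eq:kernel2}, so its Schatten norm is controlled by its trace, yielding $\norm{T_{\mu|_{E_j}}}_{\S_p}^p\lesssim \widehat{\mu}_r(z_j)^p$ and hence (3)$\Rightarrow$(1). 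The main obstacle is necessity for $0<p<1$: the Horn majorization inequality $\sum_j\fpsc{TK_{2,z_j},K_{2,z_j}}^p\leq\norm{T}_{\S_p}^p$, available for $p\geq 1$, reverses sign for $p<1$, so one cannot directly bound the lattice sum by $\norm{\Tz}_{\S_p}^p$. I expect to overcome this via a Luecking-type sampling argument: for $r$ sufficiently small, the family $\{K_{2,z_j}\}$ forms a quasi-orthogonal frame for $\gFockh$ (off-diagonal inner products $\fpsc{K_{2,z_i},K_{2,z_j}}$ decay through \eqref{eq:kernel1}), which permits controlling $\sum_j\widehat{\mu}_r(z_j)^p$ by $\norm{\Tz}_{\S_p}^p$ via a sequence-space embedding and completes the proof, with the norm equivalences emerging from tracking constants through each step.
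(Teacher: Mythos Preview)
Your plan for the equivalences (2)--(5) and the Luecking-type intuition for (1)$\Rightarrow$(2) when $0<p<1$ are on target and match the paper's strategy (the latter is precisely \propref{1:implies:2:p:smaller:1}). But two steps have genuine gaps, and in both cases the paper's fix is the same simple tool you are not using: the operator-Jensen inequalities of \lemmaref{lemma:p-powers:of:operators}.

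For (5)$\Rightarrow$(1) with $1<p<\infty$, interpolating the correspondence $\mu\mapsto T_\mu$ is not well-posed: the map is linear in the \emph{measure} $\mu$, but the endpoint norms you have are on $\tld\mu$ (or $\widehat\mu_r$), and there is no bounded linear identification of $L^p(\d\sigma)$ with a space of measures on which the endpoint bounds hold for signed data; restricting to the positive cone is incompatible with complex interpolation. The paper instead proves (4)$\Rightarrow$(1) directly: once $T_\mu$ is compact (since $\widehat\mu_r\in L^p(\d\sigma)$ forces $\widehat\mu_{r/4}\to0$ at infinity via \eqref{estimate:average:transform:by:its:Lp:norm}), take an eigenbasis $(e_n)$, estimate $\langle T_\mu e_n,e_n\rangle_\phi\lesssim\int\widehat\mu_r|e_n|^2e^{-2\phi}\d A$ by \lemmaref{lemma:mumean}(a), apply scalar Jensen against the probability measure $|e_n|^2e^{-2\phi}\d A$, and sum. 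For (1)$\Rightarrow$(5) with $p\ge1$ the paper uses $\langle T^px,x\rangle\ge\langle Tx,x\rangle^p$ together with the trace formula $\tr{S}=\int\widetilde S\,\d\sigma$ of \lemmaref{lemma:trace}; your interpolation of the Berezin transform $T\mapsto\widetilde T$ would also work here, but it must be checked on all of $\S_1$, not just on positive operators.

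For (3)$\Rightarrow$(1) with $0<p<1$, the ``essentially rank one'' step fails: $T_{\mu|_{E_j}}=\int_{E_j}\langle\,\cdot\,,K_w\rangle_\phi K_w\,e^{-2\phi(w)}\d\mu(w)$ is a genuine superposition and is \emph{not} rank one, and for $p<1$ the inequality $\|T\|_{\S_p}\le\|T\|_{\S_1}$ reverses, so controlling $\tr{T_{\mu|_{E_j}}}\simeq\widehat\mu_r(z_j)$ gives no bound on $\|T_{\mu|_{E_j}}\|_{\S_p}$. The paper's route needs neither the decomposition nor Rotfel'd: the reverse operator-Jensen inequality $\langle T^px,x\rangle\le\langle Tx,x\rangle^p$ for $0<p\le1$ (\lemmaref{lemma:p-powers:of:operators}(b)) gives $\tr{T_\mu^p}\le\int\tld\mu^{\,p}\d\sigma$ in one line (\lemmaref{lemma:aver:in:Lp:Tmu:in:Sp}(b)), so (5)$\Rightarrow$(1) is immediate once $T_\mu$ is known to be bounded.
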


We are going to prove \thmref{thm:Tzsp} in the next  subsections.

\subsection{Some technical lemmas}
In this subsection we collect all the technical lemmas that we need to carry out the proof  of \thmref{thm:Tzsp}. 

\begin{lemma} \label{lemma:kappa:constant}
	Let $0<r<1/2$ and $z\in\C$. Then:
	\begin{enumerate}
		\item[(a)] $r^2/2\le\sigma(D^r(z))\le 4^2r^2$.\vspace*{4pt}
		\item[(b)] 
		$\widehat{\mu}_{r/4}(z)\le 4\,\widehat{\mu}_r(w)$
		and
		$\widehat{\mu}_{r/4}(w)\le 4\,\widehat{\mu}_r(z)$, for any $\mu\in\M$ and  $w\in D^{r/4}(z)$.
	\end{enumerate}
\end{lemma}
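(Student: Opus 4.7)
The plan is to obtain (a) by a direct integration of the Lipschitz estimate for $\rho$, and (b) by a nested-disk inclusion together with the resulting comparison of the normalizations $\rho(z)^2$ and $\rho(w)^2$.

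For part (a), since $r<1/2<1$, the sharp form of \lemmaref{lemma:rhoinballs} recorded in its proof gives $(1-r)\rho(z)\le\rho(w)\le(1+r)\rho(z)$, and hence $\rho(z)/2<\rho(w)<3\rho(z)/2$ for every $w\in D^r(z)$. Consequently $4/(9\rho(z)^2)<\rho(w)^{-2}<4/\rho(z)^2$ on $D^r(z)$, and integrating these pointwise bounds against $\d A$ over $D^r(z)$ (whose Lebesgue area is $\pi r^2\rho(z)^2$) yields
$$
\frac{4\pi r^2}{9}\le\sigma(D^r(z))\le 4\pi r^2.
$$
Since $4\pi/9>1/2$ and $4\pi<16$, both bounds in (a) follow.

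For part (b), fix $w\in D^{r/4}(z)$. Then $|w-z|<(r/4)\rho(z)<\rho(z)/8$, so \eqref{rho:Lipschitz} gives $|\rho(w)-\rho(z)|<\rho(z)/8$ and hence $\rho(w)/\rho(z)\in(7/8,9/8)$. I first verify $D^{r/4}(z)\subset D^r(w)$: for $\zeta\in D^{r/4}(z)$, the triangle inequality gives
$$
|\zeta-w|\le|\zeta-z|+|z-w|<(r/2)\rho(z)\le(r/2)\cdot(8/7)\,\rho(w)<r\rho(w),
$$
as required. This inclusion yields $\mu(D^{r/4}(z))\le\mu(D^r(w))$. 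Combining this with the equivalences $\widehat{\mu}_{r/4}(z)\simeq\mu(D^{r/4}(z))/\rho(z)^2$ and $\widehat{\mu}_r(w)\simeq\mu(D^r(w))/\rho(w)^2$, together with $(\rho(w)/\rho(z))^2<(9/8)^2<2$, gives $\widehat{\mu}_{r/4}(z)\le 4\,\widehat{\mu}_r(w)$ with slack left in the constant. The second inequality is obtained by the analogous inclusion $D^{r/4}(w)\subset D^r(z)$: for $\zeta\in D^{r/4}(w)$ one estimates $|\zeta-z|\le|\zeta-w|+|w-z|<(r/4)\rho(w)+(r/4)\rho(z)<r\rho(z)$ using $\rho(w)<(9/8)\rho(z)$, and then divides by $\rho(w)^2$ and $\rho(z)^2$ exactly as before to pick up the factor $4$.

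There is no real obstacle here beyond numerical bookkeeping: the genuine ingredient is just the Lipschitz continuity of $\rho$, and the choice of inner radius $r/4$ together with $r<1/2$ is calibrated precisely so that $|\rho(w)-\rho(z)|$ is a fraction of $\rho(z)$ well under one half, leaving enough slack to absorb the nested-disk geometry and the ratio of the normalizations into the constant~$4$.
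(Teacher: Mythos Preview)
Your argument follows the same route as the paper's: both use the Lipschitz estimate for $\rho$ to get (a) by direct integration, and the nested-disk inclusions $D^{r/4}(z)\subset D^r(w)$, $D^{r/4}(w)\subset D^r(z)$ together with the $\rho(z)\simeq\rho(w)$ comparison to get (b). You supply more detail than the paper (which merely says ``(b) directly follows''), and your sharper bound $\rho(w)/\rho(z)\in(7/8,9/8)$ even improves on the paper's $\rho(w)/2\le\rho(z)\le2\rho(w)$.

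One point to tighten: in (b) you invoke ``the equivalences $\widehat{\mu}_{r/4}(z)\simeq\mu(D^{r/4}(z))/\rho(z)^2$'' and the analogous one at radius $r$. Written as $\simeq$, those equivalences carry implicit constants that depend on the averaging radius (namely $1/(\pi(r/4)^2)$ versus $1/(\pi r^2)$, differing by a factor $16$), so you cannot simply combine them and read off the explicit constant~$4$. The argument is clean only if you work with the normalization $\widehat{\mu}_s(z)=\mu(D^s(z))/\rho(z)^2$ throughout part~(b); then the inclusion gives $\mu(D^{r/4}(z))\le\mu(D^r(w))$, and dividing by $\rho(z)^2\ge\rho(w)^2/4$ yields $\widehat{\mu}_{r/4}(z)\le4\,\widehat{\mu}_r(w)$ exactly --- with your sharper $(\rho(w)/\rho(z))^2<(9/8)^2$ indeed leaving the slack you mention. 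So replace the $\simeq$'s by equalities with this normalization and the proof is complete.
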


\begin{proof}
	By \lemmaref{lemma:rhoinballs}, 
	$\rho(w)/2\le\rho(z)\le 2\rho(w)$, for every $z\in\C$ and 
	$w\in D^{1/2}(z)$.
	Then $\pi r^2/4\le\sigma(D^r(z))\le4\pi r^2$, for any $r\in(0,1/2)$ and $z\in\C$, so  (a) holds. Moreover,  it is easy to check that 
	$D^{r/4}(z)\subset D^r(w)$ and 
	$D^{r/4}(w)\subset D^r(z)$, 
	for any $r\in(0,1/2)$, $z\in\C$ and $w\in D^{r/4}(z)$.
	Therefore (b) directly follows.
\end{proof}

\begin{lemma} \label{lemma:trace}
	Let $T$ be a positive operator on $\gFockh$. Then the trace of $T$ is
	$$
	\tr{T}=\int_{\C}\tld{T}(z)\d\sigma(z),
	$$
	where $\tld{T}(z):=\psc{T K_{2,z}, K_{2,z}}_\phi$ is the Berezin transform of $T$.
\end{lemma}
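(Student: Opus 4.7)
The plan is to reduce the statement to the classical Parseval-type identity for the trace together with the reproducing property of $K_w$, and then to compare $K_w(w)\,e^{-2\phi(w)}$ with $\rho(w)^{-2}$ via \eqref{prop:kerneldiagonal}. (Note that since \eqref{prop:kerneldiagonal} is only an equivalence $\simeq$, the equality sign in the lemma must be read as $\simeq$; this is standard in this paper, where all constants are absorbed into the notation.)

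First I would factor $T = S^{\ast}S$ with $S := T^{1/2}$, which is positive and self-adjoint on $\gFockh$ since $T$ is positive and bounded (the case $T$ unbounded being trivial, as then both sides are $+\infty$). Fix an orthonormal basis $(e_n)_n$ of $\gFockh$. By definition,
$$
\tr{T} = \sum_n \fpsc{T e_n, e_n} = \sum_n \fpsc{S e_n, S e_n} = \sum_n \gfhnorm{Se_n}^2 = \sum_n \int_{\C} |Se_n(w)|^2\, e^{-2\phi(w)}\, \d A(w).
$$

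Next I would use the reproducing property, which applies since $Se_n \in \gFockh$: $Se_n(w) = \fpsc{Se_n, K_w} = \fpsc{e_n, S K_w}$. Squaring and summing in $n$ yields, by Parseval's identity in $\gFockh$ applied to the vector $SK_w$,
$$
\sum_n |Se_n(w)|^2 = \sum_n |\fpsc{e_n, SK_w}|^2 = \gfhnorm{SK_w}^2 = \fpsc{S^2 K_w, K_w} = \fpsc{TK_w, K_w}.
$$
Swapping the sum and the integral above (legitimate by Tonelli, since all integrands are non-negative) then gives
$$
\tr{T} = \int_{\C} \fpsc{T K_w, K_w}\, e^{-2\phi(w)}\, \d A(w).
$$

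Finally, I would unravel the normalization. From $K_{2,w} = K_w/\gfhnorm{K_w}$ we have $\fpsc{TK_w, K_w} = \gfhnorm{K_w}^2\, \tld{T}(w)$, and \eqref{prop:kerneldiagonal} gives $\gfhnorm{K_w}^2 \simeq e^{2\phi(w)}/\rho(w)^2$. Substituting,
$$
\tr{T} \simeq \int_{\C} \tld{T}(w)\, \frac{\d A(w)}{\rho(w)^2} = \int_{\C} \tld{T}(w)\, \d\sigma(w),
$$
which is the claim. There is no real obstacle: the only subtle point is the legitimacy of the interchange of sum and integral (handled by Tonelli thanks to positivity) and the observation that the argument is independent of the chosen orthonormal basis.
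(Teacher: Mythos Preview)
Your proof is correct and follows exactly the standard approach the paper has in mind: the paper omits the proof, merely stating that it ``is proved as \cite[Proposition 3.3]{zhu2012fock}'', and that argument is precisely the Tonelli/Parseval computation you wrote out, yielding the exact identity
\[
\tr{T}=\int_{\C}\langle TK_w,K_w\rangle_{\phi}\,e^{-2\phi(w)}\,\d A(w)
=\int_{\C}\tld{T}(w)\,\|K_w\|_{2,\phi}^{2}\,e^{-2\phi(w)}\,\d A(w).
\]
Your observation that the passage from this identity to the integral against $\d\sigma=\rho^{-2}\d A$ uses \eqref{prop:kerneldiagonal} and therefore turns the $=$ into a $\simeq$ is a valid and pertinent remark: in the doubling setting one does not have $\|K_w\|_{2,\phi}^{2}e^{-2\phi(w)}=\rho(w)^{-2}$ exactly, so the lemma as stated should indeed be read up to multiplicative constants (which is all that is needed in its applications, Lemmas~\ref{lemma:aver:in:Lp:Tmu:in:Sp} and the Schatten-class estimates).
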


\lemmaref{lemma:trace} is proved as \cite[Proposition 3.3]{zhu2012fock}, so we omit the proof.
\begin{lemma}[{\cite[Proposition 1.31]{zhu2007operator}}] \label{lemma:p-powers:of:operators}
	Let $T$ be a positive operator on a complex Hilbert space and let $x$ be a unit
	vector in $H$. Then:
	
	\begin{enumerate}
		\item[(a)] $\langle T^px,x\rangle\ge\langle Tx,x\rangle^p$,
		for every $1\le p<\infty$.
		\item[(b)] $\langle T^px,x\rangle\le\langle Tx,x\rangle^p$,
		for every  $0<p\le1$.
	\end{enumerate}
\end{lemma}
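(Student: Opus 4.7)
The plan is to reduce the inequalities to a one-variable convexity/concavity statement via the spectral theorem for positive self-adjoint operators. Since $T$ is a bounded positive operator on $H$, its spectrum $\sigma(T)$ is a compact subset of $[0,\infty)$, and the spectral theorem provides a projection-valued measure $E$ on $\sigma(T)$ such that, for any bounded Borel function $g\colon\sigma(T)\to\R$,
$$
\langle g(T)x,x\rangle=\int_{\sigma(T)}g(\lambda)\,\d\mu_x(\lambda),
$$
where $\mu_x(B):=\langle E(B)x,x\rangle$ is a finite positive Borel measure on $\sigma(T)$ with total mass $\mu_x(\sigma(T))=\|x\|^2=1$. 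Thus $\mu_x$ is a probability measure supported in $[0,\infty)$.

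Applying this with $g(\lambda)=\lambda$ and $g(\lambda)=\lambda^p$ (both bounded on $\sigma(T)$, which is compact), one obtains the integral representations
$$
\langle Tx,x\rangle=\int_{\sigma(T)}\lambda\,\d\mu_x(\lambda),
\qquad
\langle T^px,x\rangle=\int_{\sigma(T)}\lambda^p\,\d\mu_x(\lambda).
$$
Now I would invoke Jensen's inequality for the probability measure $\mu_x$. For $1\le p<\infty$, the function $t\mapsto t^p$ is convex on $[0,\infty)$, so Jensen's inequality yields
$$
\left(\int\lambda\,\d\mu_x\right)^p\le\int\lambda^p\,\d\mu_x,
$$
which is precisely assertion (a). For $0<p\le 1$, the function $t\mapsto t^p$ is concave on $[0,\infty)$, and the reverse Jensen inequality gives
$$
\int\lambda^p\,\d\mu_x\le\left(\int\lambda\,\d\mu_x\right)^p,
$$
which is assertion (b).

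There is really no serious obstacle here: the core content is just Jensen's inequality for a probability measure, and the spectral theorem is the standard device that converts operator-theoretic statements of this shape into scalar integrals against such a measure. The only minor point worth being explicit about is that $\lambda\ge 0$ on $\operatorname{supp}\mu_x\subset\sigma(T)\subset[0,\infty)$, which is what allows $\lambda^p$ to be defined (and convex/concave in the stated ranges) without sign issues. Since this is a textbook statement (Proposition 1.31 in Zhu's monograph cited in the excerpt), the authors can simply refer to it; the sketch above is what underlies that reference.
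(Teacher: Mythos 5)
Your argument is correct: the paper gives no proof of this lemma, simply citing Proposition 1.31 of Zhu's monograph, and your spectral-theorem-plus-Jensen derivation (writing $\langle g(T)x,x\rangle=\int g\,\mathrm{d}\mu_x$ against the probability measure $\mu_x(\cdot)=\langle E(\cdot)x,x\rangle$ and using convexity/concavity of $t\mapsto t^p$) is precisely the standard proof of that cited result. Nothing further is needed.
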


\begin{lemma} \label{lemma:aver:in:Lp:Tmu:in:Sp}
	Assume that $\mu$ is a $\phi$-Fock-Carleson measure. Then:
	
	\begin{enumerate}
		\item[(a)] $\norm{\tld{\mu}}^p_{L^p(\C,\d\sigma)}\le\tr{\Tz^p}$, for every  $1\le p<\infty$.
		\item[(b)] $\tr{\Tz^p}\le\norm{\tld{\mu}}^p_{L^p(\C,\d\sigma)}$, for every $0<p\le1$.
	\end{enumerate}
\end{lemma}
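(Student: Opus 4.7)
The strategy is to combine the trace formula of Lemma~\ref{lemma:trace} with the pointwise Jensen-type inequalities of Lemma~\ref{lemma:p-powers:of:operators}, after identifying the Berezin transform of the Toeplitz operator $T_\mu$ with the Berezin transform of the measure $\mu$.

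First I would record the identification $\widetilde{T_\mu}(z)=\tld{\mu}(z)$ for every $z\in\C$. Indeed, since $\mu$ is a $\phi$-Fock-Carleson measure, Theorem~\ref{thm:Tzbounded} gives that $T_\mu$ is bounded on $F^2_\phi$, and formula~\eqref{positiveness:Toeplitz:operator} applied to $f=K_{2,z}$ (which is a unit vector of $F^2_\phi$, by the very definition of the $2$-normalized reproducing kernel) yields
$$
\widetilde{T_\mu}(z)=\fpsc{T_\mu K_{2,z},K_{2,z}}=\int_\C\abs{K_{2,z}(w)}^2 e^{-2\phi(w)}\d\mu(w)=\tld{\mu}(z).
$$
In particular, $T_\mu$ is a positive bounded operator on $F^2_\phi$, so its fractional power $T_\mu^p$ is a well-defined positive operator for every $p>0$ via the functional calculus.

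Next I would apply Lemma~\ref{lemma:trace} to the positive operator $T_\mu^p$ to obtain
$$
\tr{T_\mu^p}=\int_\C\fpsc{T_\mu^p K_{2,z},K_{2,z}}\d\sigma(z).
$$
For (a), with $1\le p<\infty$, Lemma~\ref{lemma:p-powers:of:operators}(a) applied to the positive operator $T_\mu$ and the unit vector $K_{2,z}\in F^2_\phi$ gives
$$
\fpsc{T_\mu^p K_{2,z},K_{2,z}}\ge\fpsc{T_\mu K_{2,z},K_{2,z}}^p=\tld{\mu}(z)^p,
$$
so integration against $\d\sigma$ provides the desired lower bound $\norm{\tld{\mu}}^p_{L^p(\C,\d\sigma)}\le\tr{T_\mu^p}$. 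For (b), with $0<p\le 1$, exactly the same argument using Lemma~\ref{lemma:p-powers:of:operators}(b) in place of (a) reverses the inequality and gives $\tr{T_\mu^p}\le\norm{\tld{\mu}}^p_{L^p(\C,\d\sigma)}$.

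There is essentially no obstacle: the proof is a direct combination of the three previous results, and the only point that deserves a line of justification is the identity $\widetilde{T_\mu}=\tld{\mu}$, which relies on~\eqref{positiveness:Toeplitz:operator} and thus on $\mu$ being $\phi$-Fock-Carleson (so that Fubini applies and $T_\mu$ is bounded, hence positive, on $F^2_\phi$).
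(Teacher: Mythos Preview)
Your proof is correct and follows essentially the same approach as the paper: apply the trace formula of Lemma~\ref{lemma:trace} to the positive operator $T_\mu^p$, use Lemma~\ref{lemma:p-powers:of:operators} to compare $\widetilde{T_\mu^p}$ with $(\widetilde{T_\mu})^p$ pointwise, and finish via the identity $\widetilde{T_\mu}=\tld{\mu}$ from~\eqref{positiveness:Toeplitz:operator}. The only difference is cosmetic: you establish $\widetilde{T_\mu}=\tld{\mu}$ first, whereas the paper invokes it at the end.
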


\begin{proof}
	Let $0<p<\infty$. Then $T_{\mu}^p$ is a positive operator on $\gFockh$, and so,
	by \lemmaref{lemma:trace}, 
	$$
	\tr{\Tz^p}=\int_{\C}\tld{\Tz^p}(z)\d\sigma(z).
	$$
	Now \lemmaref{lemma:p-powers:of:operators} shows that:
	\begin{enumerate}
		\item[(a)]  If $1\le p<\infty$ then 
		$\tld{\Tz^p}(z)\ge (\tld{\Tz}(z))^p$, for every $z\in\C$.
		\item[(b)]  If $0<p\le1$ then $\tld{\Tz^p}(z)\le (\tld{\Tz}(z))^p$, for every $z\in\C$.
	\end{enumerate}
	By \eqref{positiveness:Toeplitz:operator}, $\tld{\Tz}=\tld{\mu}$, and hence the lemma follows.
\end{proof}

%

\begin{lemma} \label{lemma:estimate:overlapping:indexes}
Let $0<r<R$. Then there is a constant $C_{R,r}>1$ such that
\begin{equation}\label{estimate:overlapping:indexes}
N_R(\{z_j\}_{j\ge1}) \le C_{R,r}\,N_r(\{z_j\}_{j\ge1}),
\end{equation}
for every sequence $\{z_j\}_{j\ge1}$ of different points in $\C$.
\end{lemma}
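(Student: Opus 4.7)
The plan is to fix $z\in\C$, let $A(z):=\{j\ge1:z\in D^R(z_j)\}$, and bound $|A(z)|=\sum_{j}\chi_{D^R(z_j)}(z)$ by a uniform constant multiple of $N_r(\{z_j\}_{j\ge1})$ via a simple area-counting argument. The key observation is that $z\in D^R(z_j)$ means $z\in D(z_j,R\rho(z_j))$, so \lemmaref{lemma:rhoinballs} (applied with center $z_j$ and point $z$) gives
$$c_R^{-1}\rho(z)\le\rho(z_j)\le c_R\rho(z),\qquad j\in A(z),$$
with $c_R\ge1$ depending only on $R$ and the doubling constant of $\Delta\phi$. This yields two consequences that drive the proof.

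First, the triangle inequality combined with $|z-z_j|\le R\rho(z_j)\le Rc_R\rho(z)$ implies that for every $j\in A(z)$ and every $w\in D^r(z_j)$,
$$|w-z|\le|w-z_j|+|z_j-z|\le(r+R)\rho(z_j)\le(R+r)c_R\rho(z),$$
so $D^r(z_j)\subset D(z,(R+r)c_R\rho(z))$ for all $j\in A(z)$. Second, the area of each $D^r(z_j)$ is exactly $\pi r^2\rho(z_j)^2\ge \pi r^2 c_R^{-2}\rho(z)^2$.

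The conclusion follows by comparing two estimates of $\int_{\C}\sum_{j\in A(z)}\chi_{D^r(z_j)}(w)\,\d A(w)$. From below, using the area lower bound for each $D^r(z_j)$,
$$\int_{\C}\sum_{j\in A(z)}\chi_{D^r(z_j)}(w)\,\d A(w)=\sum_{j\in A(z)}|D^r(z_j)|\ge |A(z)|\,\pi r^2 c_R^{-2}\rho(z)^2.$$
From above, using that every $D^r(z_j)$ with $j\in A(z)$ is contained in the disk $D(z,(R+r)c_R\rho(z))$ and that the pointwise sum $\sum_{j\ge1}\chi_{D^r(z_j)}$ is bounded by $N_r(\{z_j\}_{j\ge1})$,
$$\int_{\C}\sum_{j\in A(z)}\chi_{D^r(z_j)}(w)\,\d A(w)\le N_r(\{z_j\}_{j\ge1})\,\pi(R+r)^2 c_R^2\rho(z)^2.$$
Combining these gives $|A(z)|\le C_{R,r}\,N_r(\{z_j\}_{j\ge1})$ with $C_{R,r}=(R+r)^2 c_R^4/r^2$, and taking the supremum over $z\in\C$ yields \eqref{estimate:overlapping:indexes}.

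There is no serious obstacle; the only point requiring care is verifying that $\rho(z_j)\simeq\rho(z)$ uniformly for $j\in A(z)$, which is immediate from \lemmaref{lemma:rhoinballs} once one reads $z\in D^R(z_j)$ as placing $z$ inside the $D^R$-disk around $z_j$. Everything else is area arithmetic.
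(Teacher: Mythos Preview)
Your proof is correct and is essentially identical to the paper's own argument: both use \lemmaref{lemma:rhoinballs} to get $\rho(z_j)\simeq\rho(z)$ for $j\in A(z)$, then compare the total area $\sum_{j\in A(z)}|D^r(z_j)|$ from below via the area lower bound and from above via the overlapping bound $N_r$ on the containing disk. You even arrive at the same explicit constant $C_{R,r}=c_R^4(1+R/r)^2$.
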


\begin{proof}
Let $\{z_j\}_{j\ge1}$ be a sequence of different points in $\C$ such that $N_r(\{z_j\}_{j\ge1})<\infty$.
By \eqref{eq:rhoinballs}, $c^{-1}_{R}\rho(z)\le\rho(w)\le c_{R}\,\rho(z)$,
 for every $z\in\C$ and $w\in D^{R}(z)$. 
 Then it is clear that $\cup_{z\in D^R(z_j)}D^r(z_j)\subset D^{c_R(r+R)}(z)$, for every $z\in\C$, and so
$$
 \#\{j\ge1\,:\,z\in D^R(z_j)\}\,r^2c_R^{-2}\rho(z)^2\le \sum_{z\in D^R(z_j)}r^2\rho(z_j)^2\le N_r(\{z_j\}_{j\ge1})\, c_R^2(r+R)^2\rho(z)^2.
$$	
Since 
$N_R(\{z_j\}_{j\ge1})=
	\sup_{z\in\C}\#\{j\ge1\,:\,z\in D^R(z_j)\}$,
we conclude that the constant $C_{R,r}=c_R^4(1+R/r)^2$ satisfies \eqref{estimate:overlapping:indexes}.
\end{proof}

\begin{lemma} \label{lemma:estimate:Ipr}
	$$
	\sup_{\zeta\in\C}\int_{\C}\bigg(\int_{D^r(\zeta)}
	\frac{\d\sigma(w)}{\exp(\d_\phi(z,w)^{\varepsilon})}
	\bigg)^p\d\sigma(z)<\infty\qquad(\varepsilon,p,r>0).
	$$
\end{lemma}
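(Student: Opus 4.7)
\medskip

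\noindent\textbf{Proof plan.} Fix $\zeta\in\C$ and $\varepsilon,p,r>0$, and denote by $F(z)$ the inner integral. The strategy is to split the outer domain at a large $d_\phi$-radius around $\zeta$ and exploit the super-exponential decay of the integrand on the far tail. Concretely, I would split $\C = D^R(\zeta)\cup D^R(\zeta)^c$ for a radius $R=R(r,p,\varepsilon)$ to be chosen.

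On the inner piece I would use only the trivial bound $\exp(-\d_\phi(z,w)^\varepsilon)\le 1$: since $\d\sigma=\d A/\rho^2$ and \lemmaref{lemma:rhoinballs} gives $\rho(w)\simeq\rho(\zeta)$ on $D^r(\zeta)$, one has $\sigma(D^r(\zeta))\le C_r$ uniformly in $\zeta$, and likewise $\sigma(D^R(\zeta))\le C_R$. Hence $F(z)\le C_r$ for every $z$, and
$$\int_{D^R(\zeta)}F(z)^p\,\d\sigma(z)\le C_r^p\,C_R,$$
which is a constant independent of $\zeta$.

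For the tail piece, I would use the triangle inequality for $\d_\phi$ together with the estimates \eqref{dist:estimate1} and \eqref{dist:estimate2}. By \eqref{dist:estimate1}, every $w\in D^r(\zeta)$ satisfies $\d_\phi(\zeta,w)\le c_r r=:M_r$. By \eqref{dist:estimate2}, every $z\in D^R(\zeta)^c$ satisfies $\d_\phi(z,\zeta)\gtrsim R^\delta$, so by taking $R$ large enough (depending only on $r$) I can ensure $\d_\phi(z,\zeta)\ge 2M_r$ on $D^R(\zeta)^c$. The triangle inequality then yields $\d_\phi(z,w)\ge\d_\phi(z,\zeta)-M_r\ge\d_\phi(z,\zeta)/2$, so
$$F(z)\le C_r\exp\!\bigl(-(\d_\phi(z,\zeta)/2)^\varepsilon\bigr)\qquad(z\in D^R(\zeta)^c),$$
and the tail integral is dominated by $C_r^p\int_{D^R(\zeta)^c}\exp(-c_0\,\d_\phi(z,\zeta)^\varepsilon)\,\d\sigma(z)$, where $c_0:=p/2^\varepsilon$.

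The only delicate point is that this constant $c_0$ may be strictly smaller than one, so \lemmaref{lemma:integral:estimate} cannot be quoted verbatim; this is the main obstacle. I would bypass it by picking any $\tilde\varepsilon\in(0,\varepsilon)$: since $t^{\varepsilon-\tilde\varepsilon}\to\infty$ as $t\to\infty$, there is $t_0=t_0(c_0,\varepsilon,\tilde\varepsilon)$ such that $c_0 t^\varepsilon\ge t^{\tilde\varepsilon}$ for $t\ge t_0$. Enlarging $R$ once more so that $\d_\phi(z,\zeta)\ge t_0$ on $D^R(\zeta)^c$, one gets $\exp(-c_0\,\d_\phi(z,\zeta)^\varepsilon)\le\exp(-\d_\phi(z,\zeta)^{\tilde\varepsilon})$ there, and \lemmaref{lemma:integral:estimate} (with $k=0$ and exponent $\tilde\varepsilon$) bounds the tail integral by $C_{\tilde\varepsilon,0}(R)$, uniformly in $\zeta$. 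Adding the two contributions yields the claimed uniform bound, completing the proof.
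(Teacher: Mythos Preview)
Your argument is correct and essentially identical to the paper's: the same splitting into $D^R(\zeta)$ and $D^R(\zeta)^c$, the same trivial bound on the near piece via $\sigma(D^r(\zeta))\lesssim 1$, the same use of \eqref{dist:estimate1}--\eqref{dist:estimate2} and the triangle inequality to get $\d_\phi(z,w)\ge\tfrac12\d_\phi(z,\zeta)$ on the far piece, and the same trick of lowering the exponent so that \lemmaref{lemma:integral:estimate} applies. The only cosmetic difference is that the paper fixes $\tilde\varepsilon=\varepsilon/2$ explicitly, whereas you leave $\tilde\varepsilon\in(0,\varepsilon)$ free.
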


\begin{proof}
	For every $R>0$, we split the statement's integral on $\C$ into the corresponding integrals on $D^R(\zeta)$ and $D^R(\zeta)^c$, which we denote by $I_R(\zeta)$ and $J_R(\zeta)$, respectively. 
	Then \eqref{eq:rhoinballs} shows that
	$$
	\sup_{\zeta\in\C}I_R(\zeta)\le \sup_{\zeta\in\C} \sigma(D^R(\zeta))\,\sigma(D^r(\zeta))^p<\infty,
	\quad\mbox{for every $R>0$.}
	$$
	Now let us estimate $J_R(\zeta)$.
	Let $\zeta\in\C$, $z\in D^R(\zeta)^c$ and $w\in D^r(\zeta)$. If $R\ge r$ then $D^R(\zeta)^c\subset D^r(\zeta)^c$, and so \eqref{dist:estimate2} shows that 
	$\d_\phi(z,\zeta)\ge c_r^{-1}R^{\delta}$.
	On the other hand, \eqref{dist:estimate1} implies that  
	$\d_\phi(w,\zeta)\le c_r r$. 
	Thus if $R\ge(2c_r^2r)^{1/\delta}$ then 
	$\d_\phi(w,\zeta)\le\frac12\,\d_\phi(z,\zeta)$,
	and so 
	$\d_\phi(z,w)\ge\d_\phi(z,\zeta)-\d_\phi(w,\zeta)
	\ge\frac12\d_\phi(z,\zeta)$.
	Therefore
	$$
	J_R(\zeta)\le\sigma(D^r(\zeta))^p\int_{D^R(\zeta)^c}
	\frac{\d\sigma(z)}
	{\exp(\frac{p}{2^{\varepsilon}}\d_\phi(z,\zeta)^\varepsilon)}.
	$$             
	Now if $R\ge(4c_rp^{-2/\varepsilon})^{1/\delta}$ and $z\in D^R(\zeta)^c$ then 
	$\d_\phi(z,\zeta)^{\varepsilon/2}\ge (c_r^{-1}R^{\delta})^{\varepsilon/2}\ge \frac{2^{\varepsilon}}p$ and so   
	$\frac{p}{2^{\varepsilon}}\d_\phi(z,\zeta)^\varepsilon\ge  \d_\phi(z,\zeta)^{\varepsilon/2}$. Hence, for $R>0$ large enough, we obtain that    
	$$
	\sup_{\zeta\in\C} J_R(\zeta) \le \sup_{\zeta\in\C} \sigma(D^r(\zeta))^p\int_{D^R(\zeta)^c}
	\frac{\d\sigma(z)}
	{\exp(\d_\phi(z,\zeta)^{\varepsilon/2})}<\infty,
	$$          
	by \eqref{eq:rhoinballs} and \lemmaref{lemma:integral:estimate}. 
\end{proof}

\begin{lemma} \label{partition:lemma1}
	For $R>0$ and any finite sequence $\{z_j\}_{j=1}^n$ of different points in $\C$, let
	\begin{equation*}
		M_R(\{z_j\}_{j=1}^n):=\max_{1\le j\le n} 
		\#\{\,k\in\{1,\dots,n\}\,:\,|z_j-z_k|< R\min(\rho(z_j),\rho(z_k))\,\}.
	\end{equation*}
	Then $\{z_j\}_{j=1}^n$ can be partitioned into no more than $M_R(\{z_j\}_{j=1}^n)$ subsequences such that any different points $z_j$ and $z_k$ in the same subsequence satisfy either $z_j\not\in D^R(z_k)$ or $z_k\not\in D^R(z_j)$, that is,
	$|z_j-z_k|\ge R\min(\rho(z_j),\rho(z_k))$.
\end{lemma}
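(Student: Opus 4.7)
My plan is to recast the statement as a graph coloring problem and then apply greedy coloring. Define a (simple, undirected) graph $G$ on the vertex set $V = \{1, \dots, n\}$ by declaring $j \sim k$ (for $j \ne k$) precisely when
$$
|z_j - z_k| < R\min(\rho(z_j), \rho(z_k)).
$$
This adjacency relation is manifestly symmetric. In the graph $G$, the degree of a vertex $j$ equals
$$
\deg_G(j) = \#\{\,k \in \{1,\dots,n\} \setminus \{j\} : |z_j - z_k| < R\min(\rho(z_j), \rho(z_k))\,\},
$$
which is exactly one less than the $j$-th term in the max defining $M_R$, since the case $k = j$ always contributes $1$. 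Consequently the maximum degree of $G$ satisfies $\Delta(G) = M_R(\{z_j\}_{j=1}^n) - 1$.

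Next I would invoke the standard greedy coloring argument: order the vertices arbitrarily as $1, 2, \dots, n$ and, processing them one by one, assign to each vertex $j$ the smallest positive integer not used by any previously colored neighbor of $j$. At each step at most $\Delta(G)$ colors are forbidden, so a color in $\{1, 2, \dots, \Delta(G) + 1\}$ is always available. This produces a proper coloring of $G$ with at most $\Delta(G) + 1 = M_R(\{z_j\}_{j=1}^n)$ colors.

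Finally I take the subsequences to be the color classes of this coloring. By construction, two different indices $j \ne k$ in the same color class satisfy $j \not\sim k$ in $G$, which means
$$
|z_j - z_k| \ge R\min(\rho(z_j), \rho(z_k)),
$$
equivalently, either $z_j \notin D^R(z_k)$ or $z_k \notin D^R(z_j)$, which is the required separation. There is no real obstacle here; the only minor point to verify is that the given $M_R$ really counts the vertex $j$ itself (so that $\Delta(G) + 1 = M_R$), which follows from $|z_j - z_j| = 0 < R\min(\rho(z_j),\rho(z_j))$.
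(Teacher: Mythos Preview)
Your argument is correct and arguably cleaner than the paper's. The key observation that $M_R$ counts $j$ itself (so that $\Delta(G)+1=M_R$) is exactly right, and the greedy coloring bound $\chi(G)\le\Delta(G)+1$ does the rest.

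The paper takes a different, more hands-on route: it argues by induction on $N=M_R(\{z_j\}_{j=1}^n)$. At each stage it greedily extracts a separated subsequence $\{z'_j\}$ by repeatedly choosing, among the points not yet covered by $D^R(z'_1)\cup\cdots\cup D^R(z'_{k-1})$, one with the \emph{smallest} value of $\rho$; this minimal-$\rho$ selection is what guarantees condition (i). Every leftover point $z''_j$ then lies in some $D^R(z'_k)$ with $\rho(z'_k)\le\rho(z''_j)$, which forces $M_R(\{z''_j\})\le N-1$, and induction finishes. So the paper exploits the ordering by $\rho$ to build each piece explicitly, whereas your argument abstracts away all the geometry: once the symmetric relation is identified, nothing about $\rho$ or the disks $D^R$ is used, and the result falls out of elementary graph theory. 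Your approach is shorter and applies verbatim to any symmetric ``closeness'' relation; the paper's approach gives a slightly more explicit construction tied to the metric data.
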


\begin{proof}
	We proceed by induction on $N=M_R(\{z_j\}_{j=1}^n)$.
	
	If $N=1$ then $|z_j-z_k|\ge R\min(\rho(z_j),\rho(z_k))$,
	for $1\le j<k\le n$, and there is nothing to prove.
	
	Let $N>1$. Then we may split $\{z_j\}_{j=1}^n$ into two subsequences $\{z'_j\}_{j=1}^{n'}$ and $\{z''_j\}_{j=1}^{n''}$ satisfying the following two conditions:
	\begin{enumerate}
		\item [(i)] $|z'_j-z'_k|\ge R\min(\rho(z'_j),\rho(z'_k))$,
		for $1\le j<k\le n'$. 
		\item [(ii)] For any $1\le j\le n''$ there is $1\le k\le n'$ so that $|z''_j-z'_k|< R\min(\rho(z''_j),\rho(z'_k))$.
	\end{enumerate}
	Namely, the points $z'_j$, $1\le j\le n'$,  can be inductively selected as follows:
	
	Let $z'_1$ be a point in $\{z_j\}_{j=1}^n$ such that
	$\rho(z'_1)=\min\{\rho(z_j)\,:\,1\le j\le n\,\}$. 
	
	Assume that $z'_{k-1}$ has been picked. 
	If $D^R(z'_1)\cup \dots \cup D^R(z'_{k-1})$ contains all the points from the sequence $\{z_j\}_{j=1}^n$, let $n'=k-1$ and stop the process of selection.
	Otherwise, pick a point $z'_k$ in $\{z_j\}_{j=1}^n\cap D^R(z'_1)^c\cap\dots\cap D^R(z'_{k-1})^c$ such that 
	$$
	\rho(z'_k)=\min\{\rho(z_j)\,:\,z_j\in D^R(z'_1)^c\cap\dots\cap D^R(z'_{k-1})^c\}.
	$$
	Then it is clear that the selected points $z'_j$, $1\le j\le n'$, satisfy (i), and, if $z''_j$, $1\le j\le n''$, are the non-selected points in $\{z_j\}_{j=1}^n$, (ii) also holds.

	Therefore $M_R(\{z''_j\}_{j=1}^{n''})\le N-1$ and so the induction hypothesis shows that we can partition $\{z''_j\}_{j=1}^{n''}$ into no more than $M_R(\{z''_j\}_{j=1}^{n''})$ subsequences satisfying the separation property of the statement. Hence the proof is complete.
\end{proof}

\begin{lemma} \label{partition:lemma2}
	Let $r\in(0,1)$ and $R>1$.
	Then 
	$M_R(\{z_j\}_{j=1}^n)\le 6^2R^4r^{-2}N_r(\{z_j\}_{j=1}^n)$,
	for every finite sequence $\{z_j\}_{j=1}^n$ in $\C$.
	Recall that $N_r(\{z_j\}_{j=1}^n)$ is defined by \eqref{overlapping:index}.
\end{lemma}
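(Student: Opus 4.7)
The plan is to bound, for each fixed index $j$, the cardinality of the set
$$
S_j:=\{\,k\in\{1,\dots,n\}\,:\,|z_j-z_k|<R\min(\rho(z_j),\rho(z_k))\,\}
$$
by a uniform multiple of $N_r(\{z_j\}_{j=1}^n)$, and then take the maximum over $j$ to reach $M_R$. The strategy is a volume (area) argument: the disks $\{D^r(z_k)\}_{k\in S_j}$ have a uniform lower bound on area controlled by $\rho(z_j)$, while their union is contained in a disk centered at $z_j$ whose radius is controlled by $R\rho(z_j)$, so the overlap index $N_r$ pins down how many such $D^r(z_k)$ can fit.

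Fix $j$ and $k\in S_j$. Applying the Lipschitz estimate~\eqref{rho:Lipschitz} to $|z_j-z_k|<R\rho(z_j)$ and to $|z_j-z_k|<R\rho(z_k)$, I would obtain (using $R>1$) the two-sided comparison
$$
\frac{\rho(z_j)}{2R}<\rho(z_k)<2R\,\rho(z_j).
$$
In particular, $|D^r(z_k)|=\pi r^2\rho(z_k)^2\ge \pi r^2\rho(z_j)^2/(4R^2)$. On the other hand, if $w\in D^r(z_k)$ then
$$
|w-z_j|\le|w-z_k|+|z_k-z_j|<r\rho(z_k)+R\rho(z_j)<(2Rr+R)\rho(z_j)=R(1+2r)\rho(z_j),
$$
and since $r<1$ this gives $\bigcup_{k\in S_j}D^r(z_k)\subset D(z_j,3R\rho(z_j))$, a disk of area $9\pi R^2\rho(z_j)^2$.

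Now I would combine the two estimates through the overlap index. By definition of $N_r$, for every $z\in\C$ we have $\sum_{k=1}^n\chi_{D^r(z_k)}(z)\le N_r(\{z_j\}_{j=1}^n)$, so Tonelli's theorem and the containment of the union yield
$$
|S_j|\cdot\frac{\pi r^2\rho(z_j)^2}{4R^2}\le\sum_{k\in S_j}|D^r(z_k)|=\int_{\C}\sum_{k\in S_j}\chi_{D^r(z_k)}\,dA\le N_r(\{z_j\}_{j=1}^n)\cdot 9\pi R^2\rho(z_j)^2.
$$
Dividing yields $|S_j|\le 36\,R^4r^{-2}N_r(\{z_j\}_{j=1}^n)$. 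Taking the maximum over $j$ gives precisely $M_R(\{z_j\}_{j=1}^n)\le 6^2R^4r^{-2}N_r(\{z_j\}_{j=1}^n)$.

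This proof is essentially a geometric packing argument, so there is no real obstacle; the only delicate point is keeping the numerical constants sharp enough to land on exactly $6^2R^4r^{-2}$. That is achieved by exploiting $R>1$ (to absorb the $1+R$ factors from the Lipschitz step into $2R$) and $r<1$ (to estimate $R(1+2r)\le 3R$), which together produce the clean constants $4R^2$ in the lower area bound and $9R^2$ in the upper containment bound, multiplying to $36$.
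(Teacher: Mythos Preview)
Your proof is correct and follows essentially the same route as the paper's: both use the Lipschitz estimate~\eqref{rho:Lipschitz} together with $R>1$ to get $\rho(z_j)/(2R)<\rho(z_k)<2R\rho(z_j)$ for $k\in S_j$, then show $\bigcup_{k\in S_j}D^r(z_k)\subset D^{3R}(z_j)$ and conclude via the area/overlap argument with $N_r$. The paper writes the final step as $M_j\,\frac{r^2}{4R^2}\rho(z_j)^2\le r^2\sum_{k\in A_j}\rho(z_k)^2\le (3R)^2\rho(z_j)^2 N_r$, which is exactly your integration inequality with the $\pi$'s cancelled.
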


\begin{proof}
	Let $\{z_j\}_{j=1}^n$ be a finite sequence in $\C$, and, for  $1\le j\le n$, let
	$$
	A_j:=\{\,k\in\{1,\dots,n \}\,:\,|z_j-z_k|< R\min(\rho(z_j),\rho(z_k))\,\}
	\quad\mbox{and}\quad M_j:=\# A_j.
	$$
	If $k\in A_j$ then 
	$|\rho(z_j)-\rho(z_k)|\le|z_j-z_k|< R\min(\rho(z_j),\rho(z_k))$,
	by \eqref{rho:Lipschitz}, and so
	$\rho(z_j)\le 2R\rho(z_k)$ and $\rho(z_k)\le 2R\rho(z_j)$,
	since $R>1$. Now if $z\in D^r(z_k)$, for some $k\in A_j$, then 
	$|z-z_j|\le|z-z_k|+|z_k-z_j|<r\rho(z_k)+R\rho(z_j)\le 3R\rho(z_j)$,
	since $r<1$. Thus  $\cup_{k\in A_j}D^r(z_k)\subset D^{3R}(z_j)$.
	Therefore
	$$
	M_j\,\frac{r^2}{2^2R^2}\,\rho(z_j)^2\le
	r^2\sum_{k\in A_j}\rho(z_k)^2 \le 
	(3R)^2\rho(z_j)^2\,N_r,
	$$
	where $N_r=N_r(\{z_j\}_{j=1}^n)$. Hence 
	$M_R(\{z_j\}_{j=1}^n)=\max_{1\le j\le n}M_j\le 6^2\, R^4r^{-2}N_r$.
\end{proof}

\begin{lemma} \label{operator:A:lemma}
	Let $\{e_j\}_{j\ge1}$ be an orthonormal basis of $\gFockh$.
	Let $r>0$ and let $\{z_j\}_{j=1}^n$ be a finite sequence in
	$\C$.  Then
	\begin{equation}\label{operator:S:definition}
		Sf:=\sum_{j=1}^n\langle f, e_j\rangle_{\phi}\,K_{2,z_j}
		\qquad(f\in\gFockh),
	\end{equation}
	is a bounded linear operator on $\gFockh$ such that
	$\|S\|_{\gFockh\to\gFockh}\le C_r\,N_r(\{z_j\}_{j=1}^n)^{1/2}$, where $C_r>0$ is a constant that only depends on $r$ and $N_r(\{z_j\}_{j=1}^n)$ is defined by~{\eqref{overlapping:index}.}
\end{lemma}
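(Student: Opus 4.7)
The strategy is a Schur-type bound on the reproducing-kernel Gram matrix. Setting $c_j := \langle f, e_j\rangle_\phi$ and expanding,
\[
\|Sf\|_{2,\phi}^2 = \sum_{j,k=1}^n c_j\overline{c_k}\,\langle K_{2,z_j}, K_{2,z_k}\rangle_\phi.
\]
The inequality $|c_j\overline{c_k}|\le\tfrac12(|c_j|^2+|c_k|^2)$ together with the Hermitian symmetry $|\langle K_{2,z_j},K_{2,z_k}\rangle_\phi|=|\langle K_{2,z_k},K_{2,z_j}\rangle_\phi|$ gives
\[
\|Sf\|_{2,\phi}^2 \le \Big(\max_{1\le j\le n}\sum_{k=1}^n|\langle K_{2,z_j},K_{2,z_k}\rangle_\phi|\Big)\sum_{j=1}^n|c_j|^2,
\]
and Bessel's inequality $\sum_j|c_j|^2\le\|f\|_{2,\phi}^2$ reduces the proof to the uniform row-sum estimate
\[
R_j := \sum_{k=1}^n|\langle K_{2,z_j},K_{2,z_k}\rangle_\phi| \lesssim_r N_r(\{z_j\}_{j=1}^n).
\]

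For the matrix entries, the reproducing property yields $\langle K_{2,z_j},K_{2,z_k}\rangle_\phi = K_{z_k}(z_j)/(\|K_{z_j}\|_{2,\phi}\|K_{z_k}\|_{2,\phi})$, and combining \eqref{eq:kernel1} with \eqref{prop:kerneldiagonal} produces, for the fixed $\varepsilon>0$ of \eqref{eq:kernel1},
\[
|\langle K_{2,z_j},K_{2,z_k}\rangle_\phi|\lesssim \exp\!\big(-\fdist(z_j,z_k)^\varepsilon\big).
\]
The key step is to turn the discrete sum $\sum_k \exp(-\fdist(z_j,z_k)^\varepsilon)$ into an integral against $\d\sigma$ of $\sum_k\chi_{D^r(z_k)}$, whose pointwise supremum equals $N_r(\{z_j\}_{j=1}^n)$. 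To do this, note that for any $w\in D^r(z_k)$, \eqref{dist:estimate1} gives $\fdist(w,z_k)\le c_r r$, so by the triangle inequality $\fdist(z_j,z_k)\ge\fdist(z_j,w)-c_r r$. A short case split (depending on whether $\fdist(z_j,w)$ exceeds $2c_r r$) then yields the pointwise replacement
\[
\exp\!\big(-\fdist(z_j,z_k)^\varepsilon\big)\lesssim_r \exp\!\big(-2^{-\varepsilon}\fdist(z_j,w)^\varepsilon\big),\qquad w\in D^r(z_k).
\]

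Since $\sigma(D^r(z_k))\simeq_r 1$ by \eqref{eq:rhoinballs}, averaging the previous estimate over $D^r(z_k)$ and summing in $k$ gives
\[
R_j \lesssim_r \int_\C\Big(\sum_{k=1}^n\chi_{D^r(z_k)}(w)\Big)\exp\!\big(-2^{-\varepsilon}\fdist(z_j,w)^\varepsilon\big)\,\d\sigma(w) \le N_r(\{z_j\}_{j=1}^n)\cdot I,
\]
where $I:=\sup_{z\in\C}\int_\C\exp(-2^{-\varepsilon}\fdist(z,w)^\varepsilon)\,\d\sigma(w)$ is finite: for the part on $D^{r'}(z)^c$ with $r'\ge 1$ this follows from \lemmaref{lemma:integral:estimate} (after absorbing the constant $2^{-\varepsilon}$ into the power, e.g.\ by bounding the integrand by $\exp(-\fdist(z,w)^{\varepsilon/2})$ when $\fdist(z,w)$ is large), while on $D^{r'}(z)$ itself one uses the trivial bound $\sigma(D^{r'}(z))\lesssim_{r'} 1$. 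The main technical point is the case-split step: the additive gap $c_r r$ between $\fdist(z_j,z_k)$ and $\fdist(z_j,w)$ does not transfer multiplicatively through the $\varepsilon$-th power, forcing a separate treatment of the small-distance regime before the exponential decay kicks in; once that is handled, the remaining estimates are routine.
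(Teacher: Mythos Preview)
Your argument is correct, but it takes a genuinely different route from the paper's. The paper never expands the Gram matrix of the normalized kernels. Instead it estimates the bilinear form $\langle Sf,g\rangle_\phi=\sum_{j=1}^n\langle f,e_j\rangle_\phi\,\langle K_{2,z_j},g\rangle_\phi$ directly: one Cauchy--Schwarz in $j$ gives
\[
|\langle Sf,g\rangle_\phi|\le\|f\|_{2,\phi}\Bigl(\sum_{j=1}^n|\langle K_{2,z_j},g\rangle_\phi|^2\Bigr)^{1/2},
\]
and since $\langle K_{2,z_j},g\rangle_\phi=\overline{g(z_j)}/\|K_{z_j}\|_{2,\phi}$, \eqref{prop:kerneldiagonal} reduces the parenthetical sum to $\sum_j\rho(z_j)^2|g(z_j)e^{-\phi(z_j)}|^2$. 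The sub-mean-value inequality of \lemmaref{lemma:estimates} (together with \eqref{eq:rhoinballs}) bounds each summand by $\int_{D^r(z_j)}|ge^{-\phi}|^2\,\d A$, and the finite-overlap condition immediately gives $N_r\,\|g\|_{2,\phi}^2$. No off-diagonal kernel decay, no distance estimates, and no appeal to \lemmaref{lemma:integral:estimate} are required.

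Your Schur-type bound on the row sums $R_j$ is a perfectly valid alternative, and the case split handling the additive loss $c_r r$ under the $\varepsilon$-power is done correctly. The trade-off is that your route imports the full machinery of the pointwise kernel estimate \eqref{eq:kernel1}, the metric comparisons \eqref{dist:estimate1}--\eqref{dist:estimate2}, and \lemmaref{lemma:integral:estimate}, whereas the paper's argument is essentially a one-line Carleson-measure (sampling) inequality. On the other hand, your Gram-matrix approach is more robust: it would transfer to any kernel with exponential off-diagonal decay in $\d_\phi$, while the paper's proof is tied to the reproducing property $\langle K_{2,z},g\rangle_\phi=\overline{g(z)}/\|K_z\|_{2,\phi}$.
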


\begin{proof}
	We only have to prove that
	\begin{equation}\label{estimate:operator:norm}
		|\langle Sf,g\rangle_{\phi}|\le C_r\,N_r^{1/2}\,\|f\|_{2,\phi}\|g\|_{2,\phi},
		\quad\mbox{for every $f,g\in\gFockh$,} 
	\end{equation}
	where $C_r>0$ is a constant that only depends on $r$ and $N_r:=N_r(\{z_j\}_{j=1}^n)$ .
	
	Let $f,g\in\gFockh$. Then Cauchy-Schwarz inequality shows that
	$$
	|\langle Sf,g\rangle_{\phi}|=
	\bigg|\sum_{j=1}^n\langle f, e_j\rangle_{\phi}
	\langle K_{2,z_j},g\rangle_{\phi}\bigg|\le
	\|f\|_{2,\phi}\bigg(\sum_{j=1}^n
	|\langle K_{2,z_j},g\rangle_{\phi}|^2\bigg)^{1/2}. 
	$$
	By \eqref{prop:kerneldiagonal}, there is a constant $c>0$ such that $e^{\phi(z)}/(c\rho(z))\le\|K_z\|_{2,\phi}$, 
	for every $z\in\C$, and so
	$$
	|\langle Sf,g\rangle_{\phi}|\le c\,\|f\|_{2,\phi}
	\bigg(\sum_{j=1}^n\rho(z_j)^2|g(z_j)e^{-\phi(z_j)}|^2\bigg)^{1/2}.
	$$
	Now \lemmaref{lemma:estimates} and \eqref{eq:rhoinballs} show that there is a constant $C(r)>0$, which only depends on $r$, such that 
	$$
	\rho(z)^2|h(z)e^{-\phi(z)}|^2\le C(r)^2\int_{D^r(z)}|he^{-\phi}|^2\d A,
	$$
	for every $h\in\H(\C)$ and $z\in\C$. Therefore
	$$
	|\langle Sf,g\rangle_{\phi}|\le c\,C(r)\|f\|_{2,\phi}
	\bigg(\sum_{j=1}^n\int_{D^r(z_j)}|ge^{-\phi}|^2\d A\bigg)^{1/2}
	\le c\,C(r)N_r^{1/2}\|f\|_{2,\phi}\|g\|_{2,\phi}.
	$$
	Hence we conclude that $C_r=c\,C(r)$ satisfies \eqref{estimate:operator:norm}.
\end{proof}

\subsection{Proof of the equivalence of \eqref{sp:second}, \eqref{sp:third},  \eqref{sp:four} and \eqref{sp:fith} of \thmref{thm:Tzsp}}
It is clear that $\eqref{sp:second}\Rightarrow\eqref{sp:third}$, while 	$\eqref{sp:fith}\Rightarrow\eqref{sp:four}$ is a direct consequence of \lemmaref{lemma:carleson1}. So we only have to prove 
$\eqref{sp:third}\Rightarrow\eqref{sp:fith}$ and $\eqref{sp:four}\Rightarrow\eqref{sp:second}$.
\vspace*{3pt}
\imply{(\ref{sp:four})}{(\ref{sp:second})} Since 
\begin{equation}\label{monotonicity:average}
s^2\,\widehat{\mu}_s(z)\le r^2\,\widehat{\mu}_r(z),
\quad\mbox{for every $0<s<r$ and $z\in\C$,}	
\end{equation}
we may assume that there is $r_1\in(0,1/2)$ such that $\widehat{\mu}_r\in L^p(\C,\d\sigma)$, for every $r\in(0,r_1)$.
 Then \lemmaref{lemma:kappa:constant} shows that any $(r/4,\phi)$-lattice 
 $\{z_j\}_{j\ge1}$ with $r\in(0,r_1)$ satisfies 
 $$
 \frac{r^2}{32}\sum_{j=1}^{\infty}\widehat{\mu}_{r/4}(z_j)^p
 \le  4^p\,\sum_{j=1}^{\infty}
  \int_{D_j}\widehat{\mu}_r(w)^p\d\sigma(w)
 \le 4^p\,N_r
 \int_{\C} \widehat{\mu}_{r}(w)^p\d\sigma(w),
 $$
 where $D_j:=D^{r/4}(z_j)$ and $N_r:=N_{r/4}(\{z_j\}_{j\ge1})$.
 Therefore we have just proved that $r_0=r_1/4$ satisfies \eqref{sp:second}.
\vspace*{3pt}
\imply{(\ref{sp:third})}{(\ref{sp:fith})} 
The following proposition proves this implication in a quantitative way. 
\begin{proposition}
	For any $0<p<\infty$ and $r>0$ there is a constant 
	$C_{p,r}>0$ such that every $\mu\in\M$ and every $(r,\phi)$-lattice $\{z_j\}_{j\ge1}$ satisfy 
	$$
	\int_{\C}\tld{\mu}(z)^p\d\sigma(z) 
	\le C_{p,r}\,N_r(\{z_j\}_{j\ge1})^{(p\vee 1)-1}\,\sum_{j=1}^{\infty}\widehat{\mu}_r(z_j)^p.
	$$
\end{proposition}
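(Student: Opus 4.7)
The idea is to dominate $\tld{\mu}(z)$ by a discrete expression indexed by the lattice $\{z_j\}_{j\ge 1}$, and then integrate. Combining the off-diagonal Bergman kernel bound \eqref{eq:kernel1} with the diagonal estimate \eqref{prop:kerneldiagonal} gives
$$\tld{\mu}(z) \lesssim \int_\C e^{-2 \fdist(z,w)^\varepsilon} \frac{\d\mu(w)}{\rho(w)^2} \qquad (z\in\C).$$
Split this integral along $\C = \bigcup_{j\ge 1} D^r(z_j)$. On $D^r(z_j)$, Lemma~\ref{lemma:rhoinballs} yields $\rho(w)\simeq_r \rho(z_j)$, while the triangle inequality together with \eqref{dist:estimate1} yields $|\fdist(z,w) - \fdist(z,z_j)| \le \fdist(w,z_j) \le c_r r$. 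A case analysis according to whether $\fdist(z,z_j)$ is below or above $2c_r r$ shows that $e^{-2\fdist(z,w)^\varepsilon} \lesssim_r e^{-c\, \fdist(z,z_j)^\varepsilon}$ for $w\in D^r(z_j)$, with a constant $c=c(\varepsilon,r)>0$. Since $\mu(D^r(z_j))/\rho(z_j)^2 \simeq \widehat{\mu}_r(z_j)$, this produces
$$\tld{\mu}(z) \lesssim \sum_{j\ge 1} e^{-c\,\fdist(z,z_j)^\varepsilon}\, \widehat{\mu}_r(z_j) =: S(z).$$

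For $0<p\le 1$, apply the elementary subadditivity $(\sum a_j)^p \le \sum a_j^p$ pointwise and integrate, using Tonelli. The estimate then reduces to the uniform bound $\sup_{\zeta\in\C}\int_\C e^{-cp\,\fdist(z,\zeta)^\varepsilon}\,\d\sigma(z) < \infty$, which is a mild extension of Lemma~\ref{lemma:integral:estimate} (any positive constant can be absorbed in front of $\fdist^\varepsilon$, as shown by splitting on whether $\fdist(z,\zeta)$ exceeds a threshold). This gives the claim with factor $N_r^{(p\vee 1)-1}=1$.

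For $p\ge 1$, factor $e^{-c\fdist(z,z_j)^\varepsilon}=e^{-c\fdist(z,z_j)^\varepsilon/p'}\cdot e^{-c\fdist(z,z_j)^\varepsilon/p}$ and apply H\"older's inequality with exponents $p'$ and $p$ to obtain
$$S(z)^p \le \Bigl(\sum_j e^{-c\,\fdist(z,z_j)^\varepsilon}\Bigr)^{p-1}\, \sum_j e^{-c\,\fdist(z,z_j)^\varepsilon}\, \widehat{\mu}_r(z_j)^p.$$
Integrating the second factor against $\d\sigma$ and switching the order of summation and integration reduces it (by the uniform integral bound already used) to a constant times $\sum_j \widehat{\mu}_r(z_j)^p$. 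The key remaining input is the uniform estimate $\sum_j e^{-c\fdist(z,z_j)^\varepsilon} \lesssim_r N_r(\{z_j\})$ for every $z\in\C$. It follows from a reversed comparison: for $w\in D^r(z_j)$ we have $\fdist(z,z_j)\ge \fdist(z,w)-c_r r$, and another case split yields $e^{-c\fdist(z,z_j)^\varepsilon} \lesssim_r \sigma(D^r(z_j))^{-1}\int_{D^r(z_j)}e^{-c'\fdist(z,w)^\varepsilon}\d\sigma(w)$; summing over $j$ and using the $N_r$-fold overlap of $\{D^r(z_j)\}$ bounds the total by $N_r\int_\C e^{-c'\fdist(z,w)^\varepsilon}\d\sigma(w)\lesssim N_r$.

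\textbf{Main obstacle.} The technical heart is the two-way transfer of the exponential factor between a generic point $w\in D^r(z_j)$ and the centre $z_j$: once in the forward direction (reducing $\tld{\mu}$ to $S$) and once in the reverse direction (bounding $\sum_j e^{-c\fdist(z,z_j)^\varepsilon}$). Both steps require the case split at $\fdist(z,z_j)\sim c_r r$, and one must keep careful track that every absorbed constant depends only on $\varepsilon$ and $r$, so that they all combine into the final $C_{p,r}$.
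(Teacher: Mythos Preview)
Your proposal is correct and follows the same global strategy as the paper (pointwise domination of $\tld{\mu}$ by a lattice sum, then subadditivity for $p\le1$ and H\"older for $p>1$), but the implementation is genuinely different. The paper keeps the kernel in the form $|K_{2,z}e^{-\phi}|^2$: it applies the sub-mean-value Lemma~\ref{lemma:estimates} to replace point values by averages over $D^r(w)$, obtaining $\tld{\mu}(z)\lesssim\sum_j\widehat{\mu}_r(z_j)\int_{D_j}|K_{2,z}e^{-\phi}|^2\,\d A$ with the enlarged disks $D_j=D^{r(2+r)}(z_j)$. For $p>1$ the H\"older step is then essentially free, since the auxiliary factor satisfies $\sum_j\int_{D_j}|K_{2,z}e^{-\phi}|^2\,\d A\le N_{r(2+r)}\,\|K_{2,z}\|_{2,\phi}^2=N_{r(2+r)}$ just by the overlap constant (handled via Lemma~\ref{lemma:estimate:overlapping:indexes}) and the normalization of $K_{2,z}$; the off-diagonal bound~\eqref{eq:kernel1} is only invoked at the end, through Lemma~\ref{lemma:estimate:Ipr}. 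You instead pass to the scalar weight $e^{-c\,\fdist(z,z_j)^{\varepsilon}}$ at the outset, which avoids Lemma~\ref{lemma:estimates} and the enlarged disks and keeps the $p\le1$ case equally short. The price appears in the $p>1$ step, where you need the uniform bound $\sum_j e^{-c\,\fdist(z,z_j)^{\varepsilon}}\lesssim N_r$ via your reverse transfer and overlap argument --- a step the paper sidesteps using the $L^2$ normalization of $K_{2,z}$. Both routes are sound; the paper trades a second case split for an extra lemma on enlarged disks, while your approach is more elementary but requires the explicit discrete-to-continuous comparison.
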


\begin{proof} 
	Let $0<p<\infty$ and $r>0$. 
	Along this proof  $A\lesssim B$ means that $A\le C\,B$, where $C>0$ is a constant that only depends on $p$ and $r$. 
	 Let $\mu\in\M$ and let $\{z_j\}_{j\ge1}$ be an $(r,\phi)$-lattice. 
	First we are going to obtain a pointwise estimate of $\widetilde{\mu}$ in terms of the sequence $\{\widehat{\mu}_r(z_j)\}_{j\ge1}$. 
	Note that
	$$
	\widetilde{\mu}(z)=\int_{\C}|K_{2,z}e^{-\phi}|^2\d\mu
	\le\sum_{j=1}^{\infty}\int_{D^r(z_j)}|K_{2,z}e^{-\phi}|^2\d\mu.
	$$
	Then, by \lemmaref{lemma:estimates},
	$$
	|K_{2,z}(w)e^{-\phi(w)}|^2\lesssim \int_{D^r(w)}|K_{2,z}e^{-\phi}|^2\,
	\frac{\d A}{\rho^2}\qquad (w\in\C).
	$$
	Moreover, \eqref{rho:Lipschitz} shows that $A_j:=\cup_{w\in D^r(z_j)}D^r(w)\subset D^{r(2+r)}(z_j)$, while \eqref{eq:rhoinballs} implies that $\rho(z_j)\lesssim\inf_{\zeta\in A_j}\rho(\zeta)$.
	Thus
	$$
	|K_{2,z}(w)e^{-\phi(w)}|^2\lesssim\frac1{\rho(z_j)^2}
    \int_{D_j}|K_{2,z}e^{-\phi}|^2\d A
    \qquad(w\in D^r(z_j),\,j\ge1),
	$$
	where $D_j:=D^{r(2+r)}(z_j)$, and  therefore we get the pointwise estimate
	\begin{equation}\label{pointwise:estimate}
	\widetilde{\mu}(z)\lesssim \sum_{j=1}^{\infty}
	\widehat{\mu}_r(z_j) \int_{D_j}|K_{2,z}e^{-\phi}|^2
	\d A\qquad(z\in\C).
	\end{equation}
	
	If $0<p\le1$ then \eqref{pointwise:estimate} implies that 
	\begin{equation}\label{integral:estimate:p:less:than:1}
	\int_{\C}\widetilde{\mu}(z)^p\d\sigma(z)\lesssim \sum_{j=1}^{\infty}
	\widehat{\mu}_r(z_j)^p\, I_{p,r}(z_j),
	\end{equation}
	where
	$$
	I_{p,r}(\zeta):=\int_{\C}\bigg(\int_{D^{r(2+r)}(\zeta)}
	|K_{2,z}e^{-\phi}|^2 \d A\bigg)^p\d\sigma(z)
	\qquad(\zeta\in\C).
	$$
	
	On the other hand, if $1<p<\infty$ and $q$ is the conjugate exponent of $p$, then  \eqref{pointwise:estimate} and H\"{o}lder's inequality show that
	\begin{align*}
	\widetilde{\mu}(z)^p 
	&\lesssim \bigg(\sum_{j=1}^{\infty}\widehat{\mu}_r(z_j)^p
	                          \int_{D_j} |K_{2,z}e^{-\phi}|^2 \d A\bigg)
	\bigg(\sum_{j=1}^{\infty} 
	\int_{D_j} |K_{2,z}e^{-\phi}|^2 \d A\bigg)^{\frac{p}q}\\
	&\lesssim N_{r(2+r)}(\{z_j\}_{j\ge1})^{p-1}\,
	\sum_{j=1}^{\infty}\widehat{\mu}_r(z_j)^p
	\int_{D_j} |K_{2,z}e^{-\phi}|^2 \d A.
	\end{align*}
	By integrating the preceding estimate and applying \lemmaref{lemma:estimate:overlapping:indexes} we get
	\begin{equation}\label{integral:estimate:p:greater:than:1}
	\int_{\C}\widetilde{\mu}(z)^p\d\sigma(z)\lesssim 
	N_r(\{z_j\}_{j\ge1})^{p-1}\,
	\sum_{j=1}^{\infty}
	\widehat{\mu}_r(z_j)^p\, I_{1,r}(z_j).
	\end{equation}
	
	Since \eqref{integral:estimate:p:less:than:1} holds for $0<p\le1$, while \eqref{integral:estimate:p:greater:than:1} holds for $1<p<\infty$, the proof will be complete once we prove that  $\sup_{\zeta\in\C}I_{p,r}(\zeta)<\infty$.
	 Note that \eqref{prop:kerneldiagonal} and \eqref{eq:kernel1} imply the estimate
	$$
	|K_{2,z}(w)e^{-\phi(w)}| 
	\simeq \rho(z)|K_z(w)|e^{-\phi(z)-\phi(w)}
	\lesssim\frac1{\rho(w)\exp(\d_\phi(z,w)^{\varepsilon})}              \quad(z,w\in\C),              
	$$
	and so
	$$
	I_{p,r}(\zeta)\lesssim\int_{\C}\bigg(\int_{D^{r(2+r)}(\zeta)}
	\frac{\d\sigma(w)}{\exp(\d_\phi(z,w)^{\varepsilon})}
	\bigg)^p\d\sigma(z)
	\qquad(\zeta\in\C).
	$$
	Then \lemmaref{lemma:estimate:Ipr} shows that	$\sup_{\zeta\in\C}I_{p,r}(\zeta)<\infty$, and the proof is finished.
\end{proof}

\subsection{End of the proof of \thmref{thm:Tzsp} for
	$1\le p<\infty$}
\imply{(\ref{sp:first})}{(\ref{sp:fith})} 
It directly follows from 
\lemmaref{lemma:aver:in:Lp:Tmu:in:Sp}(a).
	\imply{(\ref{sp:four})}{(\ref{sp:first})}
Assume that $\widehat{\mu}_r \in L^p(\C,\d\sigma)$. By 
\eqref{monotonicity:average} we may assume that $r\in(0,1/2)$. 
Then, by \lemmaref{lemma:kappa:constant},
$r^2/32\le \sigma(D^{r/4}(z))$ and 
$\widehat{\mu}_{r/4}(z)\le 4\,\widehat{\mu}_r(w)$, for every $z\in\C$ and $w\in D^{r/4}(z)$, 
so
\begin{equation}\label{estimate:average:transform:by:its:Lp:norm}
\widehat{\mu}_{r/4}(z) \lesssim
\biggl(\int_{D^{r/4}(z)}(\widehat{\mu}_r(w))^p \d\sigma(w)\biggr)^{1/p}
\le \|\widehat{\mu}_r\|_{L^p(\C,\d\sigma)} \qquad (z\in\C),
\end{equation}
and therefore Theorems~\ref{thm:carleson1} 
and~\ref{thm:Tzbounded} show that $\Tz$ is bounded on $\gFockh$.
 Thus it only remains to 
prove that 
$\tr{\Tz^p}\lesssim\|\widehat{\mu}_{r}\|^p_{L^p(\C,\d\sigma)}$.

Let $(e_n)_{n\ge1}$ be an orthonormal basis of $\gFockh$. Then 
\eqref{positiveness:Toeplitz:operator} and~{\lemmaref{lemma:mumean}(a)} imply that
	$$
	\psc{\Tz e_n, e_n}_\phi = 
	\int_\C \abs{e_n(z)}^2 \efh{z}\dist \mu(z) \lesssim \int_\C \widehat{\mu}_{r}(z)\abs{e_n(z)}^2 \efh{z}\dist A(z).
	$$
	Since $\abs{e_n}^2 \ee^{-2\phi} \d A$ is a probability measure on $\C$ and $p\ge1$, Jensen's inequality shows that
	$$
	\psc{\Tz e_n, e_n}_\phi ^p 
	\lesssim  \int_\C  \widehat{\mu}_{r}(z)^p  \abs{e_n(z)}^2 \efh{z} \d A(z).
	 $$
	Finally, by summing up in the previous estimate and applying the monotone convergence theorem and \eqref{prop:kerneldiagonal}, we conclude that
\begin{eqnarray*}
\tr{\Tz^p} 
& \lesssim&   \int_\C  \widehat{\mu}_{r}(z)^p 
\sum_{n=1}^\infty          
 \abs{e_n(z)}^2 \efh{z} \d A(z)\\
&=&  \int_\C  \widehat{\mu}_{r}(z)^p  \gfhnorm{K_z}^2 \efh{z} 
\d A(z) 
\lesssim \int_\C  \widehat{\mu}_{r}(z)^p  \d \sigma(z).
\end{eqnarray*}

\subsection{End of the proof of \thmref{thm:Tzsp} for
	$0<p<1$}
	
\noindent
This proof is more involved than the one of the case
 $1\le p<\infty$. 
It will be done by proving the chain of implications
$\mbox{(\ref{sp:fith})}\Rightarrow\mbox{(\ref{sp:first})}
\Rightarrow\mbox{(\ref{sp:second})}$.
\vspace*{3pt}
\imply{(\ref{sp:fith})}{(\ref{sp:first})} 
Assume that $\tld{\mu}\in L^p(\C,\! \d \sigma)$. Then, by  
\lemmaref{lemma:carleson1}, there is $r\in(0,1/2)$ such that  
$\widehat{\mu}_r \in L^p(\C,\d\sigma)$, so \eqref{estimate:average:transform:by:its:Lp:norm} holds. Therefore, by~\thmref{thm:carleson1}, $\mu$ is a $\phi$-Fock-Carleson measure, 
and hence~\thmref{thm:Tzbounded}
implies that $\Tz$ is bounded on $\gFockh$.
 Finally,  the membership of $\Tz$  in the 
Schatten class $\S_p(\gFockh)$ follows from 
\lemmaref{lemma:aver:in:Lp:Tmu:in:Sp}(b).
\vspace*{3pt}

\noindent
$\mbox{\eqref{sp:first}}\Rightarrow\mbox{\eqref{sp:second}:}$ 
This is the most difficult part of the proof of \thmref{thm:Tzsp}.
The following proposition proves this implication in a quantitative way. 

\begin{proposition} \label{1:implies:2:p:smaller:1}
	Let $p\in(0,1)$ and let $r_0\in(0,1)$ satisfying \eqref{eq:kernel2}.
	 Then for any $r\in(0,r_0)$ and for any integer $N>0$ there is a constant $C=C_{p,r,N}>0$ so that
\begin{equation}\label{estimate:sums:means}
\sum_{j=1}^{\infty} \widehat{\mu}_{r}(z_j)^p 
\le C\,\|T_{\mu}\|_{\S_p}^p,
\end{equation}
	for every $\phi$-Fock-Carleson measure $\mu$ and
	for every $(r,\phi)$-lattice $\{z_j\}_{j\ge1}$ such that $N_r(\{z_j\}_{j\ge1})\le N$.
\end{proposition}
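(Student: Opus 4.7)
The plan is to reduce to an $R$-separated subsequence via the partition lemmas and then estimate a block factorization of $T_{\mu}$ using Rotfel'd's inequality and the exponential decay of the reproducing kernel. First, one may assume $\{z_j\}_{j\ge1}$ is finite by truncation. Applying Lemmas~\ref{partition:lemma1} and~\ref{partition:lemma2}, the lattice $\{z_j\}_{j=1}^n$ decomposes into at most $M\le C(r,R)N$ subsequences that are \emph{$R$-separated} in the sense that $|z_j-z_k|\ge R\min(\rho(z_j),\rho(z_k))$ for distinct points, where $R\ge 1$ will be chosen large below. Since \eqref{estimate:sums:means} is additive across these subsequences, it suffices to prove the estimate for a single $R$-separated subsequence, which we still denote $\{z_j\}$.

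Choose $R$ large enough that the disks $D^r(z_j)$ are pairwise disjoint and, by \eqref{dist:estimate2}, $\fdist(z_j,z_k)\gtrsim R^{\delta}$ for $j\ne k$. Set $\mu_j:=\chi_{D^r(z_j)}\mu$ and $\mu':=\sum_j\mu_j\le\mu$, so $T_{\mu}\ge T_{\mu'}\ge 0$ and therefore $\|T_{\mu}\|_{\S_p}^p\ge\|T_{\mu'}\|_{\S_p}^p$. The factorization $T_{\mu_j}=R_j^*R_j$ with $R_j\colon\gFockh\to L^2(D^r(z_j),e^{-2\phi}\d\mu)$ the restriction assembles into $R:=\bigoplus_j R_j$, and $R^*R=T_{\mu'}$; consequently $\|T_{\mu'}\|_{\S_p}^p=\|RR^*\|_{\S_p}^p$. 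Consider the block decomposition $RR^*=D+E$ on $\bigoplus_j L^2(\d\mu_j,e^{-2\phi})$, with $D:=\bigoplus_j R_jR_j^*$ block-diagonal. A direct computation gives
$$\|D\|_{\S_p}^p=\sum_j\|T_{\mu_j}\|_{\S_p}^p\ge\sum_j\tr{T_{\mu_j}}^p\simeq\sum_j\widehat{\mu}_r(z_j)^p,$$
where the first inequality is the subadditivity $\sum_n\lambda_n^p\ge(\sum_n\lambda_n)^p$ valid for $0<p<1$, and the last equivalence follows from $\tr{T_{\mu_j}}=\int_{D^r(z_j)}\gfhnorm{K_w}^2\,e^{-2\phi(w)}\d\mu(w)\simeq\widehat{\mu}_r(z_j)$ by \eqref{prop:kerneldiagonal}.

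Rotfel'd's inequality~\eqref{rotfel'd:inequality} applied to $D=RR^*+(-E)$ yields $\|D\|_{\S_p}^p\le\|RR^*\|_{\S_p}^p+\|E\|_{\S_p}^p\le\|T_{\mu}\|_{\S_p}^p+\|E\|_{\S_p}^p$, so the argument reduces to establishing $\|E\|_{\S_p}^p\le\tfrac12\|D\|_{\S_p}^p$ for $R$ sufficiently large. A second application of Rotfel'd to the block decomposition $E=\sum_{j\ne k}E_{jk}$, with $E_{jk}$ the operator $R_jR_k^*$ placed in block position $(j,k)$, gives $\|E\|_{\S_p}^p\le\sum_{j\ne k}\|R_jR_k^*\|_{\S_p}^p$. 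The core technical estimate, and the \emph{main obstacle} of the proof, is the exponential off-diagonal decay
$$\|R_jR_k^*\|_{\S_p}^p\lesssim e^{-cp\,\fdist(z_j,z_k)^{\varepsilon}}\widehat{\mu}_r(z_j)^{p/2}\widehat{\mu}_r(z_k)^{p/2}\qquad(j\ne k).$$
The Hilbert--Schmidt version $\|R_jR_k^*\|_{\S_2}^2=\tr{T_{\mu_j}T_{\mu_k}}\lesssim e^{-c\,\fdist(z_j,z_k)^{\varepsilon}}\widehat{\mu}_r(z_j)\widehat{\mu}_r(z_k)$ follows directly from \eqref{eq:kernel1} combined with $\fdist(z_j,\zeta)\gtrsim\fdist(z_j,z_k)$ for $\zeta\in D^r(z_k)$. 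The passage from the HS estimate to the $\S_p$ estimate (for $p<1$) relies on \eqref{eq:kernel2}, which shows that $R_k$ is concentrated near the one-dimensional subspace spanned by $K_{2,z_k}$: $R_jR_k^*$ is thereby well approximated by a rank-one operator, whose Schatten $p$-norm coincides with its operator norm.

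Granting this decay bound, the arithmetic--geometric inequality $\widehat{\mu}_r(z_j)^{p/2}\widehat{\mu}_r(z_k)^{p/2}\le\tfrac12(\widehat{\mu}_r(z_j)^p+\widehat{\mu}_r(z_k)^p)$ combined with symmetry in $j,k$ produces
$$\|E\|_{\S_p}^p\le C\,\eta(R)\sum_j\widehat{\mu}_r(z_j)^p,\qquad\eta(R):=\sup_j\sum_{k\ne j}e^{-cp\,\fdist(z_j,z_k)^{\varepsilon}},$$
where a counting argument for $R$-separated lattices (in the spirit of \lemmaref{lemma:integral:estimate}) shows $\eta(R)\to 0$ as $R\to\infty$. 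Fixing $R$ large enough that the product of $C\eta(R)$ with the implicit constant in $\sum_j\widehat{\mu}_r(z_j)^p\lesssim\|D\|_{\S_p}^p$ is at most $1/2$ closes the $R$-separated case, giving $\sum_j\widehat{\mu}_r(z_j)^p\lesssim\|T_{\mu}\|_{\S_p}^p$. Summing the resulting estimates over the $M\le C(r,R)N$ subsequences yields the claimed constant $C_{p,r,N}$ in~\eqref{estimate:sums:means}.
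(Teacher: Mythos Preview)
Your overall architecture matches the paper's: truncate to a finite sequence, use Lemmas~\ref{partition:lemma1} and~\ref{partition:lemma2} to reduce to $R$-separated subsequences, restrict $\mu$ to $\bigcup_j D^r(z_j)$, invoke L\"owner--Heinz to compare with $\|T_\mu\|_{\S_p}$, split into diagonal plus off-diagonal, and absorb the latter via Rotfel'd. The difference is \emph{which} operator you decompose. You work with $RR^*$ on $\bigoplus_j L^2(e^{-2\phi}\d\mu_j)$, so the off-diagonal blocks are the integral operators $R_jR_k^*$. The paper instead sandwiches $T_\nu$ between the sampling operator $S$ of Lemma~\ref{operator:A:lemma} and its adjoint, obtaining $T=S^*T_\nu S$, a finite-rank operator whose matrix entries in the basis $\{e_j\}$ are the \emph{scalars} $\langle T_\nu K_{2,z_j},K_{2,z_k}\rangle_\phi$. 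The paper then bounds $\|E\|_{\S_p}^p$ by $\sum_{j\ne k}|\langle T_\nu K_{2,z_j},K_{2,z_k}\rangle_\phi|^p$ via \cite[Proposition~1.29]{zhu2007operator}, reducing the off-diagonal control entirely to pointwise kernel estimates.

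The gap in your argument is exactly the step you flag as the main obstacle: the inequality $\|R_jR_k^*\|_{\S_p}^p\lesssim e^{-cp\,\fdist(z_j,z_k)^{\varepsilon}}\widehat{\mu}_r(z_j)^{p/2}\widehat{\mu}_r(z_k)^{p/2}$ for $p<1$. The Hilbert--Schmidt case does follow from \eqref{eq:kernel1}, but since $\|\cdot\|_{\S_p}\ge\|\cdot\|_{\S_2}$ for $p<2$ this gives no upper bound when $p<1$. Your justification---that \eqref{eq:kernel2} makes $R_jR_k^*$ ``approximately rank one''---does not stand as written: \eqref{eq:kernel2} controls $|K_z(w)|$ only for $w\in D^{r_0}(z)$ and says nothing about the phase, whereas $R_jR_k^*$ has kernel $K_w(z)$ with $z\in D^r(z_j)$, $w\in D^r(z_k)$ far apart; and there is no a~priori reason for $R_k^*R_k=T_{\mu_k}$ to be close to rank one for a general $\mu$ (think of $\mu=\d A$, where $T_{\chi_{D^r(z_k)}}$ is a concentration operator). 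Without a genuine singular-value decay argument for $R_jR_k^*$, uniform in $j,k$ and carrying the exponential factor, the absorption $\|E\|_{\S_p}^p\le\tfrac12\|D\|_{\S_p}^p$ cannot be closed. The paper's compression to an $n\times n$ scalar matrix via $S$ is precisely the device that sidesteps this difficulty.
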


The idea of the proof of \propref{1:implies:2:p:smaller:1} has its origins in the work of S. Semmes~{\cite{semmes}} and D. Luecking~{\cite{luecking}}. That result will easily deduced from two simple separation lemmas (Lemmas~{\ref{partition:lemma1}} and~{\ref{partition:lemma2}})  and the following key lemma.

\begin{lemma} \label{lemma:1:implies:3:p:smaller:1}
	Let $p\in(0,1)$ and let $r_0\in(0,1)$ satisfying \eqref{eq:kernel2}.
	Then for every $r\in(0,r_0)$ and for every integer $N>0$ there are constants $C=C(p,r,N)>0$ and $R=R(p,r,N)>1$ such that
	\begin{equation}\label{estimate:sums:means}
	\sum_{j=1}^n \widehat{\mu}_{r}(z_j)^p 
	\le C\,\|T_{\mu}\|_{\S_p}^p,
	\end{equation}
	for every $\phi$-Fock-Carleson measure $\mu$ and
	for every finite sequence $\{z_j\}_{j=1}^n$ in $\C$ so that $N_r(\{z_j\}_{j\ge1}^n)\le N$ and  
	$|z_j-z_k|\ge R\,\min(\rho(z_j),\rho(z_k))$, for $1\le j<k\le n$.
\end{lemma}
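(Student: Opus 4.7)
The plan is to adapt the Semmes--Luecking compression scheme to the doubling Fock setting. Fix an orthonormal basis $\{e_j\}_{j\ge1}$ of $F^2_\phi$ and, as in \lemmaref{operator:A:lemma}, form the bounded operator $S\colon F^2_\phi\to F^2_\phi$ with $Se_j=K_{2,z_j}$ for $1\le j\le n$ and $Se_j=0$ otherwise. The key observation is that under the separation hypothesis with $R$ large, the system $\{K_{2,z_j}\}_{j=1}^{n}$ is \emph{almost orthonormal}: combining \eqref{eq:kernel1}, \eqref{prop:kerneldiagonal} and \eqref{dist:estimate2} yields
\[
|\langle K_{2,z_j},K_{2,z_k}\rangle_\phi|\lesssim \exp\bigl(-d_\phi(z_j,z_k)^{\varepsilon}\bigr)\lesssim \exp\bigl(-c\,R^{\varepsilon\delta}\bigr)\qquad(j\ne k).
\]
A Schur test exploiting the overlap bound $N_r(\{z_j\})\le N$ (in the same spirit as the arguments in \secref{sec:proj}) then shows that $\|S^{*}S-I\|_{\mathrm{op}}\le 1/2$ once $R$ is large enough (depending only on $p$, $r$ and $N$), so that $\|S\|^{2}\le 3/2$.

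Consider the positive operator $B:=S^{*}T_{\mu}S$ on the span of $\{e_{j}\}_{j=1}^{n}$. Its matrix entries are $B_{jk}=\langle T_{\mu}K_{2,z_k},K_{2,z_j}\rangle_{\phi}$, its diagonal is $B_{jj}=\tld{\mu}(z_{j})\gtrsim \widehat{\mu}_{r}(z_{j})$ by \lemmaref{lemma:carleson1}, and the multiplicative property of Schatten norms gives $\|B\|_{\S_p}^{p}\le\|S\|^{2p}\|T_{\mu}\|_{\S_p}^{p}\le C\|T_{\mu}\|_{\S_p}^{p}$. Splitting $B=D+E$ into its diagonal and off-diagonal parts and invoking Rotfel'd's inequality \eqref{rotfel'd:inequality}, one obtains
\[
\sum_{j=1}^{n}\widehat{\mu}_{r}(z_{j})^{p}\lesssim \|D\|_{\S_p}^{p}\le \|B\|_{\S_p}^{p}+\|E\|_{\S_p}^{p}\lesssim \|T_{\mu}\|_{\S_p}^{p}+\|E\|_{\S_p}^{p}.
\]
It therefore remains to show that $\|E\|_{\S_p}^{p}$ can be absorbed once $R$ is large.

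Each off-diagonal entry is estimated by $|E_{jk}|\le \int |K_{2,z_j}(w)K_{2,z_k}(w)|e^{-2\phi(w)}\,d\mu(w)$, and \eqref{eq:kernel1} combined with the triangle-type bound $d_\phi(z_j,w)^{\varepsilon}+d_\phi(z_k,w)^{\varepsilon}\gtrsim d_\phi(z_j,z_k)^{\varepsilon}$ extracts a factor $\exp\bigl(-c\,d_\phi(z_j,z_k)^{\varepsilon}/2\bigr)$ of smallness. Converting $d\mu$ to $\widehat{\mu}_{r}\,dA$ through \lemmaref{lemma:mumean}(a) turns the remaining integral into a sum of localized averages and, after routine manipulations, yields
\[
|E_{jk}|\lesssim \exp\bigl(-c\,d_\phi(z_j,z_k)^{\varepsilon}/2\bigr)\bigl(\widehat{\mu}_{r}(z_{j})+\widehat{\mu}_{r}(z_{k})\bigr).
\]
To pass from this pointwise decay to an $\S_p$ bound (where diagonal-to-Schatten inequalities fail for $p<1$), I would partition the pairs $(j,k)$ with $j\ne k$ into annular families indexed by the dyadic size of $d_\phi(z_j,z_k)$, apply Rotfel'd~\eqref{rotfel'd:inequality} across these families, and use the overlap bound $N_{r}\le N$ together with \lemmaref{partition:lemma1}--\lemmaref{partition:lemma2} to count the number of neighbours per $j$ in each annulus. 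The exponential decay $\exp(-c\,R^{\varepsilon\delta})$ beats the polynomial growth in the annular counts, producing for $R$ large the bound $\|E\|_{\S_p}^{p}\le \tfrac12\sum_{j}\widehat{\mu}_{r}(z_j)^{p}+C\|T_{\mu}\|_{\S_p}^{p}$; absorbing the first term into the left-hand side of the displayed inequality above closes the argument.

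\textbf{Main obstacle.} The decisive step is the $\S_p$-norm estimate on $E$ for $p<1$: standard diagonal-to-Schatten inequalities go the wrong way in this range, so one must combine Rotfel'd's quasi-triangle with the exponential decay of the reproducing kernels and the doubling structure of $\rho$ in a quantitative way. The annular decomposition and the resulting balance between exponential decay in $R$ and polynomial counting in the number of neighbours is the technical heart of the proof.
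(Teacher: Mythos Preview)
Your overall strategy---compress $T_\mu$ by the finite-rank operator $S$ of \lemmaref{operator:A:lemma}, split $S^*T_\mu S$ into diagonal and off-diagonal parts, and absorb the off-diagonal piece---is the right one, and is exactly what the paper does. But there is a genuine gap in your off-diagonal estimate, and a separate misconception about the $\S_p$ inequalities for $p<1$.

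\textbf{The gap.} Your claimed bound
\[
|E_{jk}|\lesssim \exp\bigl(-c\,d_\phi(z_j,z_k)^{\varepsilon}/2\bigr)\bigl(\widehat{\mu}_r(z_j)+\widehat{\mu}_r(z_k)\bigr)
\]
is false in general: nothing prevents $\mu$ from having mass near a point $w_0$ far from \emph{every} $z_\ell$, and that mass contributes to $E_{jk}=\langle T_\mu K_{2,z_j},K_{2,z_k}\rangle_\phi$ while $\widehat{\mu}_r(z_j)$ and $\widehat{\mu}_r(z_k)$ can both be as small as you like. Converting $d\mu$ to $\widehat{\mu}_r\,dA$ via \lemmaref{lemma:mumean}(a) does not help, since $\widehat{\mu}_r(w)$ then enters for all $w\in\C$, not just at the lattice points. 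The paper repairs this by first \emph{localizing} the measure: one sets $d\nu:=\bigl(\sum_{\ell}\chi_{D^r(z_\ell)}\bigr)\,d\mu$, notes that $\nu\le N\mu$ so that $\|T_\nu\|_{\S_p}\le N\|T_\mu\|_{\S_p}$ by the L\"owner--Heinz inequality, and then compresses $T_\nu$ rather than $T_\mu$. Now every off-diagonal integral is supported on $\bigcup_\ell D^r(z_\ell)$; each piece carries a factor $\mu(D^r(z_\ell))\simeq\rho(z_\ell)^2\widehat{\mu}_r(z_\ell)$, and one can bound $\|E\|_{\S_p}^p$ by a constant (tending to $0$ as $R\to\infty$) times $\sum_\ell\widehat{\mu}_r(z_\ell)^p$. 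This localization step is the missing idea in your argument.

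\textbf{The misconception.} You say that ``diagonal-to-Schatten inequalities go the wrong way'' for $p<1$ and therefore propose an annular Rotfel'd decomposition. In fact the entrywise bound $\|E\|_{\S_p}^p\le\sum_{j\ne k}|E_{jk}|^p$ \emph{does} hold for all $0<p\le 2$ (see \cite[Proposition~1.29]{zhu2007operator}); this is exactly what the paper uses. So once the localization above is in place, one can sum $|E_{jk}|^p$ directly and no annular machinery is needed. Incidentally, your claim $\|S^*S-I\|_{\mathrm{op}}\le\tfrac12$ is literally impossible since $S$ has finite rank while $I$ does not; but this is harmless, because \lemmaref{operator:A:lemma} already gives $\|S\|\le C_r N^{1/2}$ without any separation hypothesis.
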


\begin{proof} 
	Let $r\in(0, r_0)$ and let $N$ be a positive integer. Let $\mu$ be a $\phi$-Fock-Carleson measure so that $\Tz\in\S_p(\gFockh)$.
 Let $\{z_j\}_{j=1}^n$ be a finite sequence  in $\C$ such that $N_r(\{z_j\}_{j\ge1}^n)\le N$ and  
$|z_j-z_k|\ge R\min(\rho(z_j),\rho(z_k))$, for $1\le j<k\le n$.
We are going to prove that there are constants $C>0$ and $R>1$, which only depend on $p$, $r$ and $N$, so that~{\eqref{estimate:sums:means}} holds. Along this proof  $A\lesssim B$ means that $A\le C\,B$, where $C>0$ is a constant that only depends on $p$, $r$ and $N$. 

Let us consider the measure $\nu$ defined by
$$
\d\nu:=\left(\sum_{j=1}^n\chi_{D^r(z_j)}\right)\d\mu.
$$
Then $\nu\in\M$ and $\nu\le N\,\mu$. It directly follows that $\nu$ is a $\phi$-Fock-Carleson measure, because so is $\mu$, and we have that $0\le T_{\nu}\le N\,T_{\mu}$, by~\eqref{positiveness:Toeplitz:operator}. 
Since $0<p<1$, the L\"{o}wner-Heinz inequality (see~\cite{lowner,heinz,pedersen}) shows that $0\le T_{\nu}^p\le N^p\,T_{\mu}^p$, and therefore 
$\|T_\nu\|_{\S_p}\le N\,\|T_{\mu}\|_{\S_p}$.

Let $T:=S^{*}T_{\nu}S$, where $S$ is the bounded linear operator on $\gFockh$ defined by~{\eqref{operator:S:definition}.} 
Namely, $T$ is the bounded linear operator on $\gFockh$ given by
\begin{equation}\label{inner:product:def:operator:T}
\langle Tf, g\rangle_{\phi} =
\langle T_{\nu}(Sf), Sg\rangle_{\phi}\qquad(f,g\in\gFockh).
\end{equation}
It follows from~{\cite[Theorem 1.6]{simon}} and \lemmaref{operator:A:lemma} that 
$$
\|T\|_{\S_p}
\le \|S\|^2_{\gFockh\to\gFockh}\,\|T_\nu\|_{\S_p}
\lesssim N\,\|T_\mu\|_{\S_p}, 
$$
 
Note that $Se_j=0$, for every $j>n$, and so, by~\eqref{inner:product:def:operator:T}, we have that
$$
Tf=\sum_{j,k=1}^n
\langle T_{\nu}K_{2,z_j},K_{2,z_k}\rangle_{\phi}\,
\langle f, e_j\rangle_{\phi}\, e_k\qquad(f\in\gFockh).
$$
Thus we decompose $T$ as the sum of its ``diagonal'' part
$$
Df:=\sum_{j=1}^n
\langle T_{\nu}K_{2,z_j},K_{2,z_j}\rangle_{\phi}\,
\langle f, e_j\rangle_{\phi}\, e_j\qquad(f\in\gFockh).
$$
and its ``non-diagonal'' part $E:=T-D$. Therefore, by Rotfel'd inequality \eqref{rotfel'd:inequality}, 
\begin{equation}\label{estimate:diagonal:nondiagonal}
N^{p}\,\|T_\mu\|_{\S_p}^p\gtrsim\|T\|_{\S_p}^p\ge
\|D\|_{\S_p}^p-\|E\|_{\S_p}^p,
\end{equation}
and we are going to get a lower estimate of $\|D\|_{\S_p}^p$
and an upper estimate of $\|E\|_{\S_p}^p$. 
By \eqref{positiveness:Toeplitz:operator} and \lemmaref{lemma:carleson1},
 we have that
$$
\langle T_{\nu}K_{2,z_j},K_{2,z_j}\rangle_{\phi}
= \int_{\C} \abs{K_{2,z_j}(w)\ef{w}}^2\d\nu(w)
\gtrsim \widehat{\mu}_r (z_j),
$$ 
so the estimate of $\|D\|_{\S_p}^p$ easily follows:
\begin{equation}\label{estimate:D}
\|D\|_{\S_p}^p=
\sum_{j=1}^n \langle T_{\nu}K_{2,z_j},K_{2,z_j}\rangle_{\phi}^p 
\gtrsim\sum_{j=1}^n \widehat{\mu}_r (z_j)^p.
\end{equation}
The estimate of $\|E\|_{\S_p}^p$ is more involved. 
First, by \cite[Proposition 1.29]{zhu2007operator} we have
$$
\|E\|_{\S_p}^p
\le \sum_{j,k=1}^{\infty}|\langle E e_j, e_k\rangle_{\phi}|^p
=  \sum_{\substack{j,k=1\\ j\ne k}}^n 
|\langle T_{\nu} K_{2,z_j}, K_{2,z_k}\rangle_{\phi}|^p
\le  \sum_{\substack{j,k=1\\ j\ne k}}^n  S_{j,k}^p,    
$$
where 
\begin{equation}\label{definition:Sjk}
S_{j,k}:=\sum_{\ell=1}^n
\int_{D^r(z_{\ell})} |K_{2,z_j}(w) K_{2,z_k}(w)| e^{-2\phi(w)} \d \mu(w).
\end{equation}
We want to estimate $S_{j,k}$ from above by a small constant times $\sum_{j=1}^n \widehat{\mu}_r (z_j)^p$. We start on by applying \eqref{eq:kernel1} and \eqref{prop:kerneldiagonal} to obtain
$$
|K_{2,z}(w)|e^{-\phi(w)}
\lesssim \frac1{\rho(w)}\,
    	\frac1{\exp(\dist_{\phi}(z,w)^{\varepsilon})}
    	\qquad(z,w\in\C),
$$  
and, in particular,
$$
|K_{2,z}(w)|e^{-\phi(w)}
\lesssim \frac1{\rho(w)^{\frac12}}\,
\frac{|K_{2,z}(w)e^{-\phi(w)}|^{\frac12}}{\exp(\frac12\d_{\phi}(z,w)^{\varepsilon})}\qquad(z,w\in\C).
$$ 
Therefore
\begin{equation}\label{estimate1:Ijkl}
|K_{2,z_j}(w) K_{2,z_k}(w)|e^{-2\phi(w)}
\lesssim \frac1{\rho(w)}\,
    \frac{|K_{2,z_j}(w)e^{-\phi(w)}|^{\frac12}
    	  |K_{2,z_k}(w)e^{-\phi(w)}|^{\frac12}}
{\exp(\frac12\d_{\phi}(z_j,w)^{\varepsilon}+
	  \frac12\d_{\phi}(z_k,w)^{\varepsilon})}.      
\end{equation}
Now we are going to prove that, for $1\le j,k,\ell\le n$, $j\ne k$, we have 
\begin{equation}\label{estimate2:Ijkl}
\inf_{w\in D^r(z_{\ell})}
\exp(\tfrac12\dist_{\phi}(z_j,w)^{\varepsilon}+\tfrac12\dist_{\phi}(z_k,w)^{\varepsilon})
\ge c_{\varepsilon,r}(R)\to\infty,
\quad\mbox{as $R\to\infty$,} 
\end{equation}
where $c_{\varepsilon,r}(R)>0$ is a constant which only depends on $\varepsilon$, $r$ and $R$.

\noindent
If $1\le j,k,\ell\le n$, $j\ne k$, then either $\ell\ne j$ or $\ell\ne k$. 
We may assume that $\ell\ne j$, since otherwise we may  replace $j$ by $k$.
 Then either $z_{\ell}\in D^R(z_j)^c\subset D(z_j)^c$ or 
  $z_j\in D^R(z_{\ell})^c\subset D(z_{\ell})^c$,
   so, by \eqref{dist:estimate1} and \eqref{dist:estimate2},
   we have that
  $$
 \dist_{\phi}(z_j,w) \ge
 \dist_{\phi}(z_j,z_{\ell})-\dist_{\phi}(w,z_{\ell}) \ge
 c_1^{-1}R^{\delta}-c_1r,
 \quad\mbox{for every $w\in D^r(z_{\ell})$.}  
 $$
  It follows that 
 $c_{\varepsilon,r}(R):=
 \exp(\frac12(c_1^{-1}R^{\delta}-c_1r)^{\varepsilon})$
 satisfies \eqref{estimate2:Ijkl}.
 
Then~{\eqref{estimate1:Ijkl}} and~{\eqref{estimate2:Ijkl}}
 show that
 \begin{equation}\label{estimate3:Ijkl}
 |K_{2,z_j}(w) K_{2,z_k}(w)|e^{-2\phi(w)}
 \lesssim \frac1{c_{\varepsilon,r}(R)}
 \biggl|\frac{K_{2,z_j}(w)e^{-\phi(w)}}
                        {\rho(w)}\biggr|^{\frac12}
 \biggl|\frac{|K_{2,z_k}(w)e^{-\phi(w)}}
                        {\rho(w)}\biggr|^{\frac12},
\end{equation}  
 for every $w\in D^r(z_{\ell})$. Recall that, by \lemmaref{lemma:rhoinballs}, there is a constant $c\ge1$ so that
 \begin{equation}\label{rho:in:unit:ball}
 c^{-1}\,\rho(z)\le\rho(w)\le c\,\rho(z),\quad
 \mbox{for every $z\in\C$ and $w\in D(z)$.} 
 \end{equation}
 Then \lemmaref{lemma:estimates} and~{\eqref{rho:in:unit:ball}} imply that 
$$
\biggl|\frac{K_{2,z}(w)e^{-\phi(w)}}
                      {\rho(w)}\biggr|^{\frac{p}2}
\lesssim  \frac1{\rho(\zeta)^{\frac{p}2+2}}
     \int_{D^r(w)}|K_{2,z}\,e^{-\phi}|^{\frac{p}2}\d A
\quad(z,\zeta\in\C,\,w\in D^r(\zeta)).     
$$
Moreover, by {\eqref{rho:in:unit:ball}}, 
$D^r(w)\subset D^{2cr}(z_{\ell})$, for every $w\in D^r(z_{\ell})$, and so
\begin{equation}\label{estimate4:Ijkl}
\biggl|\frac{K_{2,z_j}(w)e^{-\phi(w)}}
            {\rho(w)}\biggr|^{\frac12}
\lesssim\frac1{\rho(z_{\ell})^{\frac12+\frac2p}}
        \,I_{j,\ell}^{\frac1p},\quad
(1\le j,\ell\le n,\,w\in D^r(z_{\ell})),
\end{equation}
where 
$$
I_{j,\ell}:=
\int_{D^{2cr}(z_{\ell})}|K_{2,z_j}\,e^{-\phi}|^{\frac{p}2}\d A.
$$ 
Therefore it follows from \eqref{definition:Sjk}, \eqref{estimate3:Ijkl}, \eqref{rho:in:unit:ball} and \eqref{estimate4:Ijkl} that 
$$
S_{j,k}
\lesssim \frac1{c_{\varepsilon,r}(R)}
     \sum_{\ell=1}^n
     \frac{(I_{j,\ell}\,I_{k,\ell})^{\frac1p}}
          {\rho(z_{\ell})^{\frac4{p}+1}}\,
      \mu(D^r(z_{\ell}))
\lesssim \frac1{c_{\varepsilon,r}(R)}
      \sum_{\ell=1}^n
      \frac{(I_{j,\ell}\,I_{k,\ell})^{\frac1p}}
           {\rho(z_{\ell})^{\frac4{p}-1}}\,
       \widehat{\mu}_r(z_{\ell}),
$$ 
 and, since $0<p<1$, we get that
\begin{eqnarray*}
\|E\|_{\S_p}^p
&\le& \sum_{\substack{j,k=1\\ j\ne k}}^n S_{j,k}^p
\lesssim \frac1{c_{\varepsilon,r}(R)^p}
\sum_{\substack{j,k=1\\ j\ne k} }^n
\sum_{\ell=1}^n 
     \frac{I_{j,\ell}\,I_{k,\ell}}{\rho(z_{\ell})^{4-p}}\,
     \widehat{\mu}_r(z_{\ell})^p\\ 
&\le& \frac1{c_{\varepsilon,r}(R)^p}
  \sum_{\ell=1}^n
  \frac{\widehat{\mu}_r(z_{\ell})^p}{\rho(z_{\ell})^{4-p}}
  \biggl(\sum_{j=1}^nI_{j,\ell}\biggr)^2. 
\end{eqnarray*}
Now
$$
\sum_{j=1}^nI_{j,\ell} = 
\int_{D^{2cr}(z_{\ell})} 
\biggl(\sum_{j=1}^n|K_{2,z_j}(w)|^{\frac{p}2}\biggr)
                            e^{-\frac{p}2\phi(w)}\d A(w).
$$
Moreover, \eqref{prop:kerneldiagonal}, \lemmaref{lemma:estimates}, \eqref{eq:rhoinballs},  \eqref{eq:kernel1} and \cite[Lemma 2.7]{marzo2009pointwise} show that
\begin{eqnarray*}
\sum_{j=1}^n|K_{2,z_j}(w)|^{\frac{p}2}
&\lesssim& \sum_{j=1}^n \rho(z_j)^{\frac{p}2}
           |K_{w}(z_j)e^{-\phi(z_j)}|^{\frac{p}2}\\
&\lesssim& \sum_{j=1}^n \rho(z_j)^{\frac{p}2}
           \int_{D^r(z_j)}
          |K_{w}(z)e^{-\phi(z)}|^{\frac{p}2}\d\sigma(z)\\
&\lesssim& \sum_{j=1}^n \int_{D^r(z_j)}
             |K_{w}(z)e^{-\phi(z)}|^{\frac{p}2}
                          \rho(z)^{\frac{p}2}\d\sigma(z)\\
&\le& N \int_{\C}|K_{w}(z)e^{-\phi(z)}|^{\frac{p}2}
          \rho(z)^{\frac{p}2}\d\sigma(z)\\
&\lesssim& 
         N\, \frac{e^{\frac{p}2\phi(w)}}{\rho(w)^{\frac{p}2}}
         \int_{\C}\frac{\d\sigma(z)}
          {\exp(\frac{p}2\d_{\phi}(z,w)^{\varepsilon})} \\
&\lesssim& 
N\,\frac{e^{\frac{p}2\phi(w)}}{\rho(w)^{\frac{p}2}}
\int_{\C}\frac{\d\sigma(z)}
{\exp(\d_{\phi}(z,w)^{\frac{\varepsilon}2})}           
 \lesssim N\,\frac{e^{\frac{p}2\phi(w)}}{\rho(w)^{\frac{p}2}}.       
\end{eqnarray*}
It follows, by \eqref{eq:rhoinballs}, that
$$
\sum_{j=1}^nI_{j,\ell} \lesssim
N \int_{D^{2cr}(z_{\ell})} \frac{\d A(w)}{\rho(w)^{\frac{p}2}}
\lesssim N \rho(z_{\ell})^{2-\frac{p}2}
$$
and so 
\begin{equation}\label{estimate:E}
\|E\|_{\S_p}^p
\lesssim \frac{N^2}{c_{\varepsilon,r}(R)^p}
\sum_{\ell=1}^n
\frac{\widehat{\mu}_r(z_{\ell})^p}{\rho(z_{\ell})^{4-p}}
\biggl(\sum_{j=1}^n I_{j,\ell}\biggr)^2 
\lesssim \frac{N^2}{c_{\varepsilon,r}(R)^p}
\sum_{\ell=1}^n\widehat{\mu}_r(z_{\ell})^p.
\end{equation}
Hence \eqref{estimate:diagonal:nondiagonal},
 \eqref{estimate:D} and \eqref{estimate:E} show that there
  are constants $C_1,C_2>0$ which only depend on $p$ and $r$ 
  and satisfy
 $$
 \sum_{\ell=1}^n\widehat{\mu}_r(z_{\ell})^p\le 
 N^p\biggl(C_1-\frac{C_2 N^2}{c_{\varepsilon,r}(R)}\biggr)^{-1}
 \|T_{\mu}\|_{\S_p}^p
 $$
 Since $c_{\varepsilon,r}(R)\to\infty$, as $R\to\infty$, it is clear that there is $R=R(p,r,N)>1$ such that $C=2N^p/C_1$
 satisfies \eqref{estimate:sums:means}, and the proof is complete.  
\end{proof}
 
 \begin{proof}[Proof of \propref {1:implies:2:p:smaller:1}] 
 	Let $r\in(0, r_0)$ and let $N$ be a positive integer. Let $\mu$ be a $\phi$-Fock-Carleson measure and let
 	$\{z_j\}_{j\ge1}$ be an $(r,\phi)$-lattice
 	 such that $N_r(\{z_j\}_{j\ge1})\le N$. Fix a positive integer $n$. 
 	  We want to prove that 
 $$
 \sum_{j=1}^n \widehat{\mu}_{r}(z_j)^p 
 \le C\,\|T_{\mu}\|_{\S_p}^p,
 $$
 for some constant $C>0$ only depending on $p$, $r$ and $N$.	
By Lemmas~{\ref{partition:lemma1}} and~{\ref{partition:lemma2}}, for every $R>1$, the finite sequence $\{z_j\}_{j=1}^n$ can be partitioned into no more than $6^2 R^4r^{-2}N$ subsequences such that any different points $z_j$ and $z_k$ in the same subsequence satisfy $|z_j-z_k|\ge R\min(\rho(z_j),\rho(z_k))$.
Therefore \lemmaref{lemma:1:implies:3:p:smaller:1} shows that 
$$
\sum_{j=1}^n \widehat{\mu}_{r}(z_j)^p 
\le 6^2 R^4r^{-2}NC\,\|T_{\mu}\|_{\S_p}^p,
$$
for constants $C>0$ and $R>1$ only depending on $p$, $r$ and $N$.
Hence the proof is finished.
\end{proof}

\section*{Acknowledgements}

The starting point of this paper was the Master's thesis of the first-named author
under the supervision of Dr.~Joaquim Ortega-Cerd\`a, to whom he  would like to express his deep gratitude for his valuable guidance, assistance and advice. He also wishes to thank to Dr.~Jordi Pau for his useful comments, suggestions and his encouragement in carrying out this project work, mostly in the final state. 

The second-named author would like to thank to Joaquim Ortega-Cerd\`a and Jordi Pau for several informal discussions on the subject of this paper, and specially to Joan F\`{a}brega who carefully read several versions of the paper and made a number of useful suggestions that improve the quality of the paper.


\bibliographystyle{amsplain}

\begin{thebibliography}{20}

\bibitem{hicham1}
H.~Arroussi and J.~Pau,
{\em Reproducing kernel estimates, bounded projections and duality on large weighted Bergman spaces},
J. Geom. Anal., to appear.


\bibitem{hicham2}
H.~Arroussi, I.~Park and J.~Pau,
{\em Toeplitz operators acting on large weighted Bergman spaces},
preprint available at http://arxiv.org/abs/1411.0625v2.

\bibitem{christ1991}
M.~Christ,
{\em On the $\bar \partial$ equation in weighted $L^2$ norms in $\mathbb{C}^1$},
J. Geom. Anal., {\bf 1} (1991), no. 3, 193--230.


\bibitem{dall}
G. M.~Dall'Ara,
{\em Pointwise estimates of the weighted Bergman kernels in several complex variables}, preprint available at http://arxiv.org/abs/1502.00865v1.


\bibitem{heinz}
E.~Heinz, 
{\em Beitr\"{a}ge zur St\"{o}rungstheorie der Spektralzerlegung},
 Math. Ann. {\bf 123} (1951), 415–-438.


\bibitem{isra2010schatten}
J.~Isralowitz, J.~Virtanen and L.~Wolf,
 {\em Schatten class Toeplitz operators on generalized Fock spaces}, 
 J. Math. Anal. Appl. {\bf 421} (2015), no. 1, 329--337.


\bibitem{kalton2007interpolation}
N.~Kalton, S.~Mayboroda and M.~Mitrea,
{\em Interpolation of Hardy-Sobolev-Besov-Triebel-Lizorkin
spaces and applications to problems in partial differential 
equations}, Interpolation theory and applications, 
 Contemp. Math., vol. 445, Amer. Math. Soc., 2007, pp. 121--177.


\bibitem{lindholm}
N.~Lindholm,
{\em Sampling in weighted $L^p$ spaces of entire functions in 
	$\C^n$ and estimates of the Bergman kernel},
 J. Funct. Anal. {\bf 182} (2001), no. 2, 390--426.


\bibitem{lowner}
K.~L\"{o}wner, 
{\em \"{U}ber monotone Matrixfunktionen},
 Math. Z. {\bf 38} (1934), no. 1, 177–-216.


\bibitem{luecking}
D. H.~Luecking, 
{\em Trace ideal criteria for Toeplitz operators},
J. Funct. Anal. {\bf 73} (1987), no. 2, 345–-368.


\bibitem{marco2003interpolating}
N.~Marco, X.~Massaneda and J.~Ortega-Cerd{\`a},
{\em Interpolating and sampling sequences for entire functions},
Geom. Funct. Anal. {\bf 13} (2003), no. 4, 862--914.


\bibitem{marzo2009pointwise}
J.~Marzo and J.~Ortega-Cerd\`{a},
{\em Pointwise estimates for the Bergman kernel of the weighted
 Fock space},
 J. Geom. Anal. {\bf 19} (2009), no. 4, 890--910.


\bibitem{ortegaseip}
J.~Ortega-Cerd\`{a} and K.~Seip,
{\em Beurling-type density theorems for weighted $L^p$ spaces 
	of entire functions},
 J. Anal. Math. {\bf 75} (1998), no. 1,  247--266.


\bibitem{pedersen}
G. K.~Pedersen,
{\em Some operator monotone functions},
Proc. Amer. Math. Soc. {\bf 36} (1972), no. 1, 309--310.


\bibitem{rotfeld1}
S. Ju~Rotfel'd, 
{\em Remarks on the singular values of a sum of completely continuous operators},
Funkcional. Anal. i Prilo\v{z}en {\bf 1} (1967), 95-96 (in Russian);
Funct. Anal. Appl. {\bf 1} (1967), no. 3, 252--253 (English translation).

\bibitem{rotfeld2}
S. Ju~Rotfel'd, 
{\em The singular values of the sum of completely continuous operators},  Problems of mathematical physics, No. 3: Spectral theory, pp. 81--87. Izdat. Leningrad. Univ., Leningrad, 1968 (in Russian); 
Spectral Theory (M.S. Berman, ed.), Top. Math. Phys., vol. 3, Consultants Bureau, New York, 1969, pp. 73--78 (English translation).



\bibitem{schuster}
A.~Schuster and D.~Varolin,
{\em Toeplitz operators and Carleson measures on generalized 
	Bargmann-Fock spaces},
Integral Equations Operator Theory {\bf 72} (2012), no. 3, 363--392.


\bibitem{semmes}
S.~Semmes,
{\em Trace ideal criteria for Hankel operators, and applications to Besov spaces},
Integral Equations Operator Theory {\bf 7} (1984), no. 2, 241--281.


\bibitem{simon}
B.~Simon,
{\em Trace ideals and their applications}, Second Edition, 
Math. Surveys and Monographs, vol. 120, 
Amer. Math. Soc., Providence, RI, 2005.


\bibitem{stein}
E. M.~Stein,
{\em Harmonic analysis: real-variable methods, 
orthogonality and oscillatory integrals},
 Princeton Mathematical Series, vol. 43, 
Princeton University Press, Princeton, NJ, 1993.


\bibitem{thompson}
R.C.~Thompson,
{\em Convex and concave functions of singular values of matrix sums},
Pacific J. Math. {\bf 66} (1976), no. 1, 285--290.


\bibitem{xiao-wang-xia}
L. H.~Xiao, X. F.~Wang and J.~Xia,
{\em Schatten-$p$ class $(0<p\le\infty)$ Toeplitz operators on generalized Fock spaces}, 
Acta Math. Sin. (Engl. Ser.) {\bf 31} (2015), no. 4, 703--714.


\bibitem{zhu2007operator}
K.~Zhu,
{\em Operator theory in function spaces}, Second Edition,
 Math. Surveys and Monographs, vol. 138,
 Amer. Math. Soc., Providence, RI, 2007.


\bibitem{zhu2012fock}
K.~Zhu,
{\em Analysis on Fock spaces},
 Graduate Texts in Mathematics, vol. 263,
 Springer, New York, 2012.

\end{thebibliography}

\end{document}